\date{27 Sep 2012}
\title[Deformations of Affine Varieties]
{Deformations of Affine Varieties and the Deligne Crossed Groupoid}
\author{Amnon Yekutieli}
\address{A. Yekutieli: Department of  Mathematics
Ben Gurion University,
Be'er Sheva 84105,
Israel}
\email{amyekut@math.bgu.ac.il}
\thanks{{\em Mathematics Subject Classification} 2000.
Primary: 53D55; Secondary: 14B10, 16S80, 17B40, 18D05.}
\keywords{Deformation quantization, algebraic varieties, stacks, gerbes,
DG Lie algebras.}
\thanks{This research was supported by the US-Israel Binational
Science Foundation and by the Israel Science Foundation.}
\newtheorem{thm}[equation]{Theorem}
\newtheorem{cor}[equation]{Corollary}
\newtheorem{prop}[equation]{Proposition}
\newtheorem{lem}[equation]{Lemma}
\theoremstyle{definition}
\newtheorem{dfn}[equation]{Definition}
\newtheorem{rem}[equation]{Remark}
\newtheorem{exa}[equation]{Example}
\newtheorem{setup}[equation]{Setup}
\numberwithin{equation}{section}
\newcommand{\iso}{\xrightarrow{\simeq}}
\newcommand{\opn}{\operatorname}
\newcommand{\cat}[1]{\operatorname{\mathsf{#1}}}
\newcommand{\ol}{\overline}
\newcommand{\rmitem}[1]{\item[\text{\textup{(#1)}}]}
\newcommand{\mfrak}[1]{\mathfrak{#1}}
\newcommand{\mcal}[1]{\mathcal{#1}}
\newcommand{\mrm}[1]{\mathrm{#1}}
\newcommand{\mbb}[1]{\mathbb{#1}}
\newcommand{\smfrac}[2]{\textstyle \frac{#1}{#2}}
\newcommand{\tup}[1]{\textup{#1}}
\newcommand{\bsym}[1]{\boldsymbol{#1}}
\newcommand{\boplus}{\bigoplus\nolimits}
\newcommand{\bosum}{\sum\nolimits}
\newcommand{\ot}{\otimes}
\newcommand{\hatotimes}[1]{\, \what{{\otimes}}_{#1} \,}
\newcommand{\hot}{\hatotimes{}}
\newcommand{\til}[1]{\tilde{#1}}
\newcommand{\what}[1]{\widehat{#1}}
\newcommand{\K}{\mbb{K}}
\newcommand{\N}{\mbb{N}}
\newcommand{\Z}{\mbb{Z}}
\newcommand{\OO}{\mcal{O}}
\newcommand{\OX}{\mcal{O}_X}
\renewcommand{\AA}{\mcal{A}}
\newcommand{\g}{\mfrak{g}}
\newcommand{\m}{\mfrak{m}}
\newcommand{\om}{\omega}
\newcommand{\al}{\alpha}
\newcommand{\be}{\beta}
\newcommand{\ga}{\gamma}
\newcommand{\bwedge}{{\textstyle \bigwedge}}
\renewcommand{\d}{\mrm{d}}
\newcommand{\End}{\mcal{E}nd}
\newcommand{\Aut}{\mcal{A}ut}
\newcommand{\lb}{\linebreak}
\newcommand{\crvar}{\curvearrowright}
\begin{document}

\begin{abstract}
Let $X$ be a smooth affine algebraic variety over a field $\K$ of characteristic
$0$, and let $R$ be a complete parameter $\K$-algebra (e.g.\ $R = \K[[\hbar]]$).
We consider associative (resp.\ Poisson) $R$-deformations of
the structure sheaf $\OO_X$. The set of $R$-deformations has a crossed
groupoid (i.e.\ strict $2$-groupoid) structure. 
Our main result is that there is a canonical equivalence of crossed
groupoids from the Deligne crossed groupoid of normalized
polydifferential operators (resp.\ polyderivations) of $X$ to the crossed
groupoid of associative (resp.\ Poisson) $R$-deformations of $\OX$. 
The proof relies on a careful study of adically complete sheaves. In the
associative case we also have to use ring theory (Ore localizations) and the
properties of the Hochschild cochain complex. 

The results of this paper extend previous work by various authors. They are
needed for our work on twisted deformation
quantization of algebraic varieties. 
\end{abstract}

\maketitle
\tableofcontents

\setcounter{section}{-1}
\section{Introduction}
\label{sec:Int}

A {\em crossed groupoid} (or strict $2$-groupoid) 
\[ \cat{P} = 
( \cat{P}_1, \cat{P}_2, \opn{Ad}_{\cat{P}_1 \crvar \cat{P}_{2}}, \opn{D} ) \]
consists of groupoids $\cat{P}_1$ and $\cat{P}_2$, such that 
$\opn{Ob}(\cat{P}_1) = \opn{Ob}(\cat{P}_2)$, and $\cat{P}_2$ is totally
disconnected; an action $\opn{Ad}_{\cat{P}_1 \crvar \cat{P}_{2}}$ of 
$\cat{P}_1$ on $\cat{P}_2$ called the twisting; and a morphism of groupoids
(i.e.\ a functor) $\opn{D} : \cat{P}_2  \to \cat{P}_1$ called the feedback.
There are certain conditions -- see Definition \ref{dfn:cosim.101} for full
details. If $\cat{P}$ has only one object, then it is a {\em crossed module}. 
The morphisms in the groupoid $\cat{P}_i$ are called $i$-morphisms. 

Suppose $\cat{P}'$ is another crossed groupoid. A morphism of crossed groupoids
$\Phi : \cat{P} \to \cat{P}'$ is a pair of groupoid morphisms 
$\Phi_i : \cat{P}_i \to \cat{P}'_i$, $i = 1, 2$, that are equal on objects, and
respect the twistings and the feedbacks. We say that $\Phi$ is an
{\em equivalence} if $\Phi_1$ is an equivalence (in the usual sense:
essentially surjective on objects and fully faithful), and $\Phi_2$ is
fully faithful.

Let $\K$ be a field of characteristic $0$. 
A {\em parameter $\K$-algebra} is a complete local noetherian commutative
$\K$-algebra $R$, with maximal ideal $\m$ and residue field $R / \m = \K$.
The important example is $R = \K[[\hbar]]$, the ring of formal power series in
a variable $\hbar$. For $i \in \N$ we let $R_i := R / \m^{i+1}$. So $R_0 = \K$. 

Let $X$ be a smooth algebraic variety over $\K$. An {\em associative
$R$-deformation of $\OX$} is a sheaf $\mcal{A}$ of flat $\m$-adically complete
associative unital $R$-algebras on $X$, with an isomorphism 
$\K \ot_R \mcal{A} \to \OX$ of $\K$-algebras, called an augmentation. 
A  {\em Poisson $R$-deformation of $\OX$} is a sheaf $\mcal{A}$ of flat
$\m$-adically complete Poisson commutative $R$-algebras on $X$, with an
augmentation $\K \ot_R \mcal{A} \to \OX$. A {\em gauge transformation} 
$g : \mcal{A} \to \mcal{A}'$ between $R$-deformations (of the same kind) 
is an isomorphism of $R$-algebras (associative or Poisson) that commutes with
the augmentations. Similarly, for a commutative $\K$-algebra $C$ we consider
associative and Poisson $R$-deformations of $C$.

Let $\mcal{A}$ be an $R$-deformation of $\OX$. The sheaf $\m \mcal{A}$ 
is an $\m$-adically complete sheaf of pronilpotent Lie $R$-algebras (cf.\
Theorem \ref{thm:220}(3) below). The Lie bracket is either the associative
commutator, or the Poisson bracket, as the case may be. There is an associated
sheaf of pronilpotent groups 
$\opn{IG}(\mcal{A}) := \exp(\m \mcal{A})$. 

We denote by $\cat{AssDef}(R, \mcal{O}_X)$ the set of all associative
$R$-deformations of $\OX$, and by $\cat{PoisDef}(R, \mcal{O}_X)$ the set of all
Poisson $R$-deformations of $\OX$. These sets have crossed groupoid
structures on them, where the $1$-morphisms are the gauge transformations 
$g : \mcal{A} \to \mcal{A}'$, and the $2$-morphisms are the elements of the
groups $\Gamma(X, \opn{IG}(\mcal{A}))$. See Proposition \ref{prop:238}.
For an open set $U \subset X$ and a homomorphism $R \to R'$ of parameter
algebras there is a morphism of crossed groupoids 
\[ \cat{AssDef}(R, \mcal{O}_X) \to \cat{AssDef}(R', \mcal{O}_U) \ , \
\mcal{A} \mapsto (R' \hatotimes{R} \mcal{A})|_U \]
and likewise for Poisson deformations. 

Let $\g$ be a {\em quantum type} DG Lie $\K$-algebra, i.e.\ 
$\g = \bigoplus_{i \geq -1} \g^i$. There is an induced Lie $R$-algebra 
$\m \hot \g := \bigoplus_{i \geq -1} \m \hot \g^i$. We denote by 
$\opn{MC}(\m \hot \g)$ the set of solutions of the Maurer-Cartan equation in 
$\m \hot \g$. The {\em Deligne crossed groupoid} $\opn{Del}(\g, R)$
has set of objects $\opn{MC}(\m \hot \g)$, its $1$-morphisms are the elements
of the gauge group $\exp(\m \hot \g^0)$, and its $2$-morphisms are the
elements of the groups $\exp(\m \hot \g^{-1})_{\om}$ for 
$\om \in \opn{MC}(\m \hot \g)$. For full details see Definition
\ref{dfn:Lie-desc.101}.

On the variety $X$ there are sheaves of quantum type DG Lie algebras 
$\mcal{T}_{\mrm{poly}, X}$ and 
$\mcal{D}_{\mrm{poly}, X}^{\mrm{nor}}$, called the sheaves of {\em
polyderivations} and {\em normalized polydifferential operators} respectively.  
Given an affine open set $U \subset X$ we obtain 
quantum type DG Lie algebras 
$\Gamma(U, \mcal{T}_{\mrm{poly}, X})$ and 
$\Gamma(U, \mcal{D}_{\mrm{poly}, X}^{\mrm{nor}})$, 
to which we can apply the Deligne crossed groupoid construction. 

The purpose of this paper is to prove: 

\begin{thm} \label{thm:1}
Let $\K$ be a field of characteristic $0$, $X$ a smooth algebraic variety
over $\K$, $R$ a parameter algebra over $\K$, 
and $U$ an affine open set in $X$.
There are equivalences of crossed groupoids
\[  \opn{geo} : \
\opn{Del} \bigl( \Gamma(U, \mcal{D}_{\mrm{poly}, X}^{\mrm{nor}}) , R \big)
\to \cat{AssDef}(R, \mcal{O}_U) \]
and
\[  \opn{geo} : \
\opn{Del} \bigl( \Gamma(U, \mcal{T}_{\mrm{poly}, X}) , R \big)
\to \cat{PoisDef}(R, \mcal{O}_U) \]
which we call {\em geometrization}. The equivalences $\opn{geo}$ commute with 
homomorphisms $R \to R'$ of parameter algebras, and with inclusions of affine
open sets $U' \to U$.
\end{thm}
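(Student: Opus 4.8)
The natural strategy is to reduce the sheaf-theoretic statement to an algebraic one over the coordinate ring $C := \Gamma(U, \OX)$, and then invoke the classical theorem of Gerstenhaber--Schack (associative case) and of the Poisson analogue at the level of the single algebra $C$. Concretely, I would construct the geometrization functor $\opn{geo}$ in three passes. First, fix a Maurer--Cartan element $\om \in \opn{MC}(\m \hot \Gamma(U, \mcal{D}_{\mrm{poly}, X}^{\mrm{nor}}))$. Because $X$ is smooth and $U$ is affine, $\mcal{D}_{\mrm{poly}, X}^{\mrm{nor}}$ is a sheaf of quantum type DG Lie algebras whose sections over affine opens compute Hochschild cochains, and in particular $\Gamma(U, -)$ is exact on the relevant quasi-coherent pieces, so $\om$ may be regarded as a global polydifferential operator that is adically a deformation of the multiplication. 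Twisting the multiplication on $\what{R \ot_\K \OO_U}$ (the $\m$-adic completion) by $\om$ produces a sheaf $\mcal{A}_\om$ of flat $\m$-adically complete associative unital $R$-algebras together with the tautological augmentation $\K \ot_R \mcal{A}_\om \to \OO_U$; here one uses Theorem \ref{thm:220}(3) to guarantee flatness and adic completeness are preserved. This assignment on objects is the heart of $\opn{geo}$.

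Second, I would define $\opn{geo}$ on $1$-morphisms and $2$-morphisms. A gauge element $g \in \exp(\m \hot \g^0)$, with $\g = \Gamma(U, \mcal{D}_{\mrm{poly}, X}^{\mrm{nor}})$, acts on polydifferential operators; applying $g$ to an MC element $\om$ and viewing $g$ itself as a differential operator of order zero in the completed algebra yields an isomorphism of $R$-algebras $\mcal{A}_\om \to \mcal{A}_{g(\om)}$ commuting with augmentations, i.e.\ a gauge transformation in the sense of $\cat{AssDef}$. For $2$-morphisms, an element of $\exp(\m \hot \g^{-1})_\om$ is, after unwinding the Deligne construction, precisely an element of $\exp(\m \mcal{A}_\om) = \opn{IG}(\mcal{A}_\om)$ by Proposition \ref{prop:238}; the twisting and feedback of the Deligne crossed groupoid match those of $\cat{AssDef}(R, \OO_U)$ by direct comparison of the adjoint action and the ``derivative'' map. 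Checking functoriality (composition, identities) and compatibility with $R \to R'$ and $U' \to U$ is then a matter of noting that every construction above is built from completed tensor products and the sheaf $\mcal{D}_{\mrm{poly}, X}^{\mrm{nor}}$, both of which are manifestly natural; this gives a well-defined morphism of crossed groupoids, and the same recipe works verbatim with $\mcal{T}_{\mrm{poly}, X}$ and Poisson brackets in place of Hochschild cochains.

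Third, and this is where the real work lies, I must prove that $\opn{geo}$ is an \emph{equivalence}: that $\opn{geo}_1$ is essentially surjective and fully faithful, and $\opn{geo}_2$ is fully faithful. Essential surjectivity on objects says every $R$-deformation $\mcal{A}$ of $\OO_U$ is, up to gauge equivalence, of the form $\mcal{A}_\om$ --- equivalently, the deformed multiplication can be trivialized as an $R$-module so that it becomes a global polydifferential operator. Over the affine $U$ this is the statement that the deformation, as a sheaf of $R$-modules, is induced from $C = \Gamma(U, \OO_U)$, which requires knowing that flat $\m$-adically complete modules on $U$ are all of the form $\what{R \ot_\K M_0}$; for the multiplication to be polydifferential rather than merely $R$-bilinear one needs the Hochschild-cohomological input --- this is exactly where Ore localizations and the structure of the Hochschild complex of a smooth algebra (the HKR-type identification $\Gamma(U, \mcal{D}_{\mrm{poly}}^{\mrm{nor}}) \simeq$ Hochschild cochains of $C$, localized correctly) enter, as flagged in the abstract. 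Full faithfulness of $\opn{geo}_1$ and $\opn{geo}_2$ then reduces, via the same dévissage and the bijection $\exp(\m \mcal{A}_\om) = \opn{IG}(\mcal{A}_\om)$, to injectivity/surjectivity statements about differential operators versus abstract $R$-algebra maps, again controlled by Hochschild cohomology in positive degrees vanishing appropriately. \textbf{The main obstacle} I anticipate is precisely the interplay between adic completeness of sheaves on the non-quasi-compact-at-infinity affine $U$ and the ring-theoretic localization: one must show that forming Ore localizations commutes with $\m$-adic completion and with passage to Hochschild cochains, so that the local (on $U$) polydifferential description glues to the global affine one without loss; handling this carefully --- rather than the formal crossed-groupoid bookkeeping --- is the technical core of the argument.
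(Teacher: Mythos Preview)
Your overall architecture matches the paper's: build $\opn{geo}$ by sheafifying the MC element into a star product on $R \hot \mcal{O}_U$, do the same for gauge elements, and identify the $2$-morphisms with $\opn{IG}$. Essential surjectivity likewise rests on the quasi-isomorphism $\mcal{D}^{\mrm{nor}}_{\mrm{poly}}(C) \hookrightarrow \mcal{C}^{\mrm{nor}}_{\mrm{shc}}(C)$ (Theorem~\ref{thm:4}), as you indicate.

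However, you have misidentified the main obstacle. The interplay of Ore localization with $\m$-adic completion is handled cleanly and separately by Theorem~\ref{thm:221}: once an abstract $R$-deformation of $C$ is known to localize to a sheaf deformation of $\mcal{O}_U$, localization is finished. There is no need to commute Ore localization with passage to Hochschild cochains, because polydifferential operators sheafify for the much simpler reason that $\mcal{D}^{\mrm{nor}}_{\mrm{poly}, X}$ is quasi-coherent (this is Lemma~\ref{lem:235}). These two mechanisms --- Ore localization for abstract deformations, quasi-coherence for polydifferential data --- run in parallel and never have to be intertwined.

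The genuine gap in your plan is full faithfulness of $\opn{geo}_1$ in the associative case. You correctly isolate the question: given differential star products $\om, \om'$ and an abstract gauge transformation $g : (R \hot C)_\om \to (R \hot C)_{\om'}$, is $g$ itself differential? But your appeal to ``Hochschild cohomology in positive degrees vanishing appropriately'' does not deliver this. The quasi-isomorphism gives a bijection on $\ol{\opn{MC}}$; to upgrade that to a bijection on Hom-sets of the Deligne groupoid you would need the stronger theorem that a quasi-isomorphism of pronilpotent DG Lie algebras induces an equivalence of Deligne \emph{groupoids}, and you neither state nor invoke such a result. The paper instead proves this step by hand (Theorem~\ref{thm:5}): expanding $g = \sum_i r_i \otimes \gamma_i$ in a filtered $\K$-basis of $R$, one shows inductively that the commutator $[\gamma_i, d]$ is a differential operator of uniformly bounded order for every $d \in C$, whence $\gamma_i$ is differential by Grothendieck's characterization. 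This hands-on argument, not a cohomological one, is the technical core you are missing. (In the Poisson case the issue evaporates: every gauge transformation of the commutative $R$-algebra $R \hot C$ is automatically $\exp$ of a derivation, by Proposition~\ref{prop:4}(1).)
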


This theorem is a key ingredient in our proof of {\em twisted deformation
quantization} in \cite{Ye6}. 
(Actually this paper was once a part of \cite{Ye6}, but we now decided to make
it into a separate paper, since \cite{Ye6} was becoming too long.)

Theorem \ref{thm:1} is repeated as Theorem \ref{thm:202} in the Section
\ref{sec:defs-vars} of the paper, and is proved there. The proof requires
several intermediate results, and we now list some of them.  

The first intermediate result does not require $X$ to be an algebraic variety,
nor for $\K$ to have characteristic $0$. 
What we need is that $X$ is a topological space; $\mcal{M}_0$ is a sheaf of
$\K$-modules on $X$; and $X$ has {\em enough $\mcal{M}_0$-acyclic open
sets}. This is explained in Definition \ref{dfn:23}. An algebraic variety
has enough $\OX$-acyclic open sets: the affine open sets. 

\begin{thm} \label{thm:220}
Let $X$ be a topological space, $U \subset X$ an open set, $\K$ a field, 
$(R, \m)$ a parameter $\K$-algebra, and $\mcal{M}$ a sheaf of $R$-modules on
$X$. Define $M := \Gamma(U, \mcal{M})$
and $\mcal{M}_0 := \K \ot_R \mcal{M}$. 
Assume that $\mcal{M}$ is flat over $R$ and $\m$-adically complete, $X$ has
enough $\mcal{M}_0$-acyclic open sets, and $U$ is $\mcal{M}_0$-acyclic. 
Then\tup{:}
\begin{enumerate}
\item The $R$-module $M$ is flat and $\m$-adically complete.

\item Take any $i \in \N$. 
\begin{enumerate}
\item The canonical homomorphism
$R_i \ot_R M \to  \Gamma(U, R_i \ot_R \mcal{M})$ is bijective.

\item The $R$-module $\m^i M$ is $\m$-adically complete.

\item The sheaf of $R$-modules $\m^i \mcal{M}$ is $\m$-adically complete.

\item The canonical homomorphism
$\m^i M \to \Gamma(U, \m^i \mcal{M})$ 
is bijective.
\end{enumerate}
\end{enumerate}
\end{thm}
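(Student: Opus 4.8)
The plan is to reduce everything to the filtration $\{\m^i \mcal{M}\}$ and to an exact-sequence chase, exploiting that $X$ has enough $\mcal{M}_0$-acyclic open sets. First I would set up the key short exact sequences of sheaves. Since $\mcal{M}$ is flat over $R$, tensoring the exact sequences $0 \to \m^{i+1} \to \m^i \to \m^i/\m^{i+1} \to 0$ and $0 \to \m^{i+1} \to R \to R_i \to 0$ of $R$-modules with $\mcal{M}$ gives exact sequences of sheaves
\[ 0 \to \m^{i+1} \mcal{M} \to \m^i \mcal{M} \to (\m^i/\m^{i+1}) \ot_R \mcal{M} \to 0 \]
and
\[ 0 \to \m^{i+1} \mcal{M} \to \mcal{M} \to R_i \ot_R \mcal{M} \to 0 . \]
The crucial observation is that $(\m^i/\m^{i+1}) \ot_R \mcal{M}$ is a sheaf of $\K$-modules which is a \emph{finite direct sum of copies of} $\mcal{M}_0$ (because $\m^i/\m^{i+1}$ is a finite-dimensional $\K$-vector space, $R$ being noetherian). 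Hence that sheaf is $\mcal{M}_0$-acyclic on every $\mcal{M}_0$-acyclic open set, and in particular on $U$ and on the members of the given base. By induction on $i$, using the first exact sequence, the same is true of each $\m^i \mcal{M}$, and then of $R_i \ot_R \mcal{M}$ via the second exact sequence.

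With acyclicity in hand I would prove part (2) in the order (d) and the acyclicity claims together, then (c), then (a), then (b). Taking $\Gamma(U, -)$ of the exact sequences above and using acyclicity to kill the $H^1$ terms yields that
\[ 0 \to \Gamma(U, \m^{i+1}\mcal{M}) \to \Gamma(U, \m^i \mcal{M}) \to \Gamma\bigl(U, (\m^i/\m^{i+1}) \ot_R \mcal{M}\bigr) \to 0 \]
is exact, and likewise that $\Gamma(U, \mcal{M}) \to \Gamma(U, R_i \ot_R \mcal{M})$ is surjective with kernel $\Gamma(U, \m^{i+1}\mcal{M})$. Now I would identify $\Gamma(U, \m^i \mcal{M})$ with $\m^i M$: there is always a canonical map $\m^i M \to \Gamma(U, \m^i \mcal{M})$, and comparing the two exact sequences above with the analogous (exact, by $R$-flatness of $M$ which is part (1)) sequences for $M$, together with the base case $\Gamma(U, \mcal{M}) = M$, an induction on $i$ using the five lemma and the identification $\Gamma(U, (\m^i/\m^{i+1})\ot_R \mcal{M}) = (\m^i/\m^{i+1})\ot_R M$ (a finite direct sum, so $\Gamma$ commutes with it) gives (2)(d) and (2)(a) simultaneously. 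This also shows $\m^i M$ is the kernel of $M \to \Gamma(U, R_i \ot_R \mcal{M}) = R_i \ot_R M$, i.e.\ $\m^i M = \ker(M \to R_i \ot_R M)$, which is the statement that the $R$-module filtration on $M$ by powers of $\m$ agrees with the one pulled back from $\mcal{M}$.

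For the completeness statements I would take inverse limits over $i$. Part (2)(c): applying the inverse limit to the exact sequences of sheaves and checking the Mittag--Leffler condition (the transition maps $\m^j \mcal{M} \to \m^i \mcal{M}$ for $j \geq i$ are surjective onto $\m^i \mcal{M}$? — more precisely one uses that $\m^i \mcal{M} / \m^j \mcal{M} \cong \m^i \mcal{M} \ot_R R_{j-1-\text{shift}}$ and that $\mcal{M}$ itself is $\m$-adically complete) shows $\m^i \mcal{M} \cong \varprojlim_j \m^i \mcal{M} / \m^j \m^i \mcal{M}$. For (2)(b), since $\Gamma(U,-)$ is left exact and commutes with the inverse limits involved, and since by (2)(d) and (2)(a) we have $\m^i M / \m^j M \cong \Gamma(U, \m^i \mcal{M} / \m^j \mcal{M})$ compatibly, taking $\varprojlim_j$ gives $\widehat{\m^i M} \cong \Gamma(U, \varprojlim_j \m^i \mcal{M}/\m^j\mcal{M}) = \Gamma(U, \m^i \mcal{M}) = \m^i M$, so $\m^i M$ is complete; the case $i=0$ is part (1)'s completeness assertion. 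Finally (1)'s flatness: since $R$ is noetherian and $M$ is $\m$-adically complete with each $R_i \ot_R M = \Gamma(U, R_i \ot_R \mcal{M})$ flat over $R_i$ (as $R_i \ot_R \mcal{M}$ is flat over $R_i$ and, by the finite-length case of (2)(a) which needs no completeness, its sections over $U$ equal $R_i \ot_R M$), the local criterion for flatness over a complete noetherian local ring gives that $M$ is flat over $R$.

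The main obstacle I expect is the simultaneous induction establishing (2)(a) and (2)(d): one must be careful that the base case $\Gamma(U,\mcal{M}) = M$ is literally the definition of $M$, that at each step the relevant $H^1$'s vanish (this is exactly where ``enough $\mcal{M}_0$-acyclic open sets'' and ``$U$ is $\mcal{M}_0$-acyclic'' are used, via the reduction of $(\m^i/\m^{i+1})\ot_R \mcal{M}$ to a finite sum of copies of $\mcal{M}_0$), and that the maps in the two ladder diagrams genuinely commute so the five lemma applies. A secondary subtlety is the Mittag--Leffler / inverse-limit bookkeeping needed for the completeness claims (2)(b),(c) and for flatness in (1) — in particular making sure that ``$\m$-adically complete'' for a sheaf is being used in the sense of $\mcal{M} \xrightarrow{\sim} \varprojlim \mcal{M}/\m^j\mcal{M}$ as sheaves (which a priori need not commute with $\Gamma$), and that the acyclicity we have proved is exactly what lets $\Gamma$ commute with these particular inverse limits.
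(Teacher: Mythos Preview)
Your plan has the right ingredients for the artinian case, but there is a genuine gap in the general case, and it sits precisely where you claim an ``induction on $i$''.

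The sentence ``By induction on $i$, using the first exact sequence, the same is true of each $\m^i \mcal{M}$'' does not work. Look at the direction of the induction. From $0 \to \m^{i+1}\mcal{M} \to \m^i\mcal{M} \to (\m^i/\m^{i+1}) \ot_R \mcal{M} \to 0$, acyclicity of the two outer terms would give acyclicity of the middle term, so you would need a \emph{downward} induction starting from some $i$ with $\m^i\mcal{M}$ known to be acyclic. When $R$ is artinian such an $i$ exists ($\m^i = 0$ eventually), but in general there is none. The upward direction fails too: from acyclicity of $\m^i\mcal{M}$ and of the quotient you only get $H^q(U, \m^{i+1}\mcal{M}) = 0$ for $q \geq 2$; the long exact sequence leaves $H^1(U, \m^{i+1}\mcal{M})$ as a quotient of $\Gamma\bigl(U, (\m^i/\m^{i+1})\ot_R\mcal{M}\bigr)$, not obviously zero. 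And there is no base case at $i = 0$ either: nothing in the hypotheses says $\mcal{M}$ itself is acyclic on $U$. This gap cascades. The right-exactness of your displayed sequence of sections needs $H^1(U, \m^{i+1}\mcal{M}) = 0$, and the ``identification $\Gamma(U, (\m^i/\m^{i+1})\ot_R\mcal{M}) = (\m^i/\m^{i+1}) \ot_R M$'' you invoke in the five-lemma step already presupposes the $i=0$ case $M/\m M \cong \Gamma(U, \mcal{M}_0)$, which in turn needs $H^1(U, \m\mcal{M}) = 0$ and $\Gamma(U, \m\mcal{M}) = \m M$.

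The paper avoids this circle by a different organization. Parts (1) and (2)(a) are imported from \cite{Ye4} as Theorem~\ref{thm:230}: there one first shows each $R_i \ot_R \mcal{M}$ is acyclic on $U$ (your upward induction \emph{does} work for these, since the base case is $\mcal{M}_0$), so the system $\{\Gamma(U, R_i \ot_R \mcal{M})\}$ has surjective transition maps, and one controls $M = \varprojlim_i \Gamma(U, R_i \ot_R \mcal{M})$ via that inverse limit. With (1) in hand, Proposition~\ref{prop:12} gives $M \cong R \hat{\otimes} N_0$ for $N_0 = \Gamma(U, \mcal{M}_0)$, whence $\m^j M \cong \m^j \hat{\otimes} N_0$ is visibly complete, yielding (2)(b). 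Part (2)(d) is then a two-line diagram chase using only \emph{left}-exactness of $\Gamma(U,-)$ together with (2)(a) --- no vanishing of $H^1(U, \m^j\mcal{M})$ is ever used. Finally (2)(c) is checked on the basis of $\mcal{M}_0$-acyclic opens using (2)(b),(d). So your Mittag--Leffler instincts are correct, but the place to deploy them is at the very beginning, to get (1) and (2)(a), not at the end.
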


This is a combination of Theorem \ref{thm:230} and
Corollaries \ref{cor:241} and \ref{cor:242} in the body of the paper. The proofs
use results from \cite{Ye4}. 

The second intermediate result also does not require characteristic $0$. It is a
statement about sheaves of noncommutative rings on algebraic varieties. 
Suppose $\mcal{A}$ is an $R$-deformation of $\OX$, and $U \subset X$ is an
affine open set. Let $A := \Gamma(U, \mcal{A})$ and 
$C := \Gamma(U, \mcal{O}_X)$. According to Theorem \ref{thm:220}, 
the $R$-module $A$ is flat and complete, and $\K \ot_R A \cong C$; so $A$ is an
$R$-deformation of $C$. 

\begin{thm} \label{thm:221}
Let $\K$ be a field, $X$ an algebraic variety over $\K$, $U$ an affine open
set of $X$, and $C := \Gamma(X, \mcal{O}_X)$. 
\begin{enumerate}
\item Let $A$ be an associative $R$-deformation of $C$. Then there exists an 
associative $R$-deformation $\mcal{A}$ of $\mcal{O}_U$, together with a 
gauge transformation of deformations
$g : A \to \Gamma(U, \mcal{A})$. 

\item Let $\mcal{A}$ and $\mcal{A}'$ be associative $R$-deformations of
$\mcal{O}_U$, and let $g : \Gamma(U, \mcal{A}) \to \Gamma(U, \mcal{A}')$
gauge transformation of deformations of $C$. Then there is a unique 
gauge transformation of deformations
$\til{g} : \mcal{A} \to \mcal{A}'$
such that $\Gamma(U, \til{g}) = g$.
\end{enumerate}
\end{thm}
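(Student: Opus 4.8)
The key point is that both parts of Theorem \ref{thm:221} are about extending algebraic data (a deformation of the ring $C = \Gamma(U, \OO_X)$, or a gauge transformation of such) to sheaf-theoretic data on the affine scheme $U = \opn{Spec}(C)$. The natural mechanism is to produce a sheaf $\mcal{A}$ on $U$ whose module of sections over a basic open set $U_f = \{f \neq 0\}$ is a suitable $\m$-adic completion of an Ore localization $A_f$ of $A$ at the powers of $f$. First I would check that for an associative $R$-deformation $A$ of $C$ and an element $f \in C$, the image $\til{f} \in A$ of $f$ (it has a canonical lift since $A \to C$ is surjective; any two lifts differ by an element of $\m A$, hence become equal after inverting, up to a unit — or better, choose the lift and note the localization is independent of the choice up to canonical isomorphism) satisfies the Ore condition. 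Because $A$ is $\m$-adically complete and flat, and $\K \ot_R A = C$ is commutative noetherian, the associated graded $\operatorname{gr}_\m A = C \ot_R \operatorname{gr}_\m R$ is commutative noetherian; a standard filtered-ring argument then shows $\{\til{f}^n\}$ is an Ore set in $A$, and the localization $A_{\til{f}}$ is again flat over $R$ with $\operatorname{gr}_\m A_{\til{f}} = C_f \ot_R \operatorname{gr}_\m R$, so its completion $\what{A_{\til{f}}}$ is a flat $\m$-adically complete $R$-algebra with $\K \ot_R \what{A_{\til{f}}} \cong C_f$.

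**Building the sheaf.** Next I would assemble these completed localizations into a sheaf $\mcal{A}$ on $U$: set $\Gamma(U_f, \mcal{A}) := \what{A_{\til{f}}}$, with the evident restriction maps coming from further localization and completion, and verify the sheaf axioms on the basis of basic open sets. The gluing/exactness here is where Theorem \ref{thm:220} does the real work: reducing mod $\m^{i+1}$ turns $\mcal{A}$ into $R_i \ot_R \mcal{A}$, which one identifies with the quasi-coherent sheaf on $U$ associated to the $C$-module (really $R_i$-algebra) $R_i \ot_R A$ — because $R_i \ot_R A$ is finitely generated as a module over its center times $C$, or at any rate because localization commutes with the finite tensor product $R_i \ot_R (-)$ and with the sheaf-associated-to-a-module functor on an affine scheme. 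So $R_i \ot_R \mcal{A}$ is a genuine sheaf with the expected sections, and $\mcal{A} = \varprojlim_i (R_i \ot_R \mcal{A})$ is an inverse limit of sheaves, hence a sheaf, and it is $\m$-adically complete essentially by construction. Flatness over $R$ is checked stalkwise, or deduced from flatness of each $A_{\til{f}}$. This produces the associative $R$-deformation $\mcal{A}$ of $\OO_U$ of part (1); the gauge transformation $g : A \to \Gamma(U, \mcal{A}) = \what{A_{\til{1}}} = \what{A} = A$ is the identity (using that $A$ is already complete, by Theorem \ref{thm:220}(1)), and it visibly commutes with the augmentations.

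**Part (2) and the comparison.** For part (2), given associative $R$-deformations $\mcal{A}, \mcal{A}'$ of $\OO_U$ and a gauge transformation $g : \Gamma(U, \mcal{A}) \to \Gamma(U, \mcal{A}')$ of deformations of $C$, I would first invoke part (1)'s construction applied to $A := \Gamma(U, \mcal{A})$ — but here the subtlety is that $\mcal{A}$ is not a priori of the form I just built; so I need a separate lemma saying that \emph{any} associative $R$-deformation $\mcal{A}$ of $\OO_U$ is recovered from its global sections $A$ by the completed-Ore-localization recipe, i.e.\ the canonical map $\what{A_{\til{f}}} \to \Gamma(U_f, \mcal{A})$ is an isomorphism. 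This again follows from Theorem \ref{thm:220}: modulo $\m^{i+1}$ it becomes the statement that $(R_i \ot_R \mcal{A})|_{U_f}$ is the quasi-coherent sheaf associated to $(R_i \ot_R A)_f$, which holds because $R_i \ot_R \mcal{A}$ is quasi-coherent over $\OO_U$ (being an iterated extension of the coherent sheaves $\operatorname{gr}^j_\m (R \ot_R \mcal{A})$, each a finite $\OO_U$-module) and its global sections are $R_i \ot_R A$ by Theorem \ref{thm:220}(2a); then pass to the inverse limit over $i$. Once both $\mcal{A}$ and $\mcal{A}'$ are identified with completed-localization sheaves, the global gauge transformation $g$ localizes: $g$ preserves $\til{f}$ up to a unit (it fixes $f$ mod $\m$ and commutes with augmentations), so it extends uniquely to $g_f : A_{\til{f}} \to A'_{\til{f}}$ by the universal property of Ore localization, then to $\what{g_f}$ on completions; these are compatible with restrictions, hence glue to a unique $\til{g} : \mcal{A} \to \mcal{A}'$ with $\Gamma(U, \til{g}) = g$. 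Uniqueness of $\til{g}$ is forced because a sheaf morphism on the affine $U$ is determined by its effect on the basis $\{U_f\}$, and each $\til{g}|_{U_f}$ is determined by $g$ via the localization universal property.

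**Main obstacle.** The hardest part is the noncommutative input: establishing the Ore condition for $\{\til{f}^n\}$ in the associative deformation $A$ and controlling flatness and completeness of the resulting localization $\what{A_{\til{f}}}$ — in particular proving $\K \ot_R \what{A_{\til{f}}} \cong C_f$ and that the $R_i$-reductions are quasi-coherent with the right global sections. Everything commutative (the Poisson case would be analogous but easier, as localization of Poisson algebras is transparent) and everything sheaf-theoretic is handled cleanly by Theorem \ref{thm:220}; it is the interaction of Ore localization with $\m$-adic completion and flatness that requires the careful filtered-ring / Hochschild-type arguments alluded to in the introduction.
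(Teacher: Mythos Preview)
Your overall strategy---Ore-localize $A$ at lifts of elements of $C$, assemble these into a sheaf on the basis of principal opens, verify the sheaf condition via the $\m$-adic filtration---matches the paper's. But the paper organizes the argument differently in a way that closes a genuine gap in your sketch.

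You assert that in the \emph{complete} deformation $A$ the set $\{\til{f}^n\}$ is an Ore set ``by a standard filtered-ring argument.'' This is not justified: the relevant Ore lemma in the paper (Lemma~\ref{lem:1}) explicitly requires the ideal to be \emph{nilpotent}, and the passage to complete filtrations is not automatic---this is the territory of microlocalization, where the localized object need not arise as an honest Ore ring of fractions of $A$. The paper sidesteps the issue entirely: it first reduces to the artinian quotients $A_i := R_i \otimes_R A$ (where $\m A_i$ is nilpotent and Lemma~\ref{lem:1} applies via Theorem~\ref{thm:12}), builds the sheaf $\mcal{A}_i$ on $U$ for each $i$, proves the sheaf axiom for $\mcal{A}_i$ by induction on $i$ using the exact sequence $0 \to \m^i R_i \to R_i \to R_{i-1} \to 0$ together with flatness, and only then passes to the inverse limit $\mcal{A} := \varprojlim_i \mcal{A}_i$, invoking Proposition~\ref{prop:16} to conclude that this limit is a flat $\m$-adically complete deformation. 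So your ``$\what{A_{\til{f}}}$'' should really be read as $\varprojlim_i (A_i)_{\til{f}}$, with the Ore step carried out only at finite level.

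Two smaller points. First, your appeal to Theorem~\ref{thm:220} for the sheaf axiom is misplaced: that theorem computes sections of a sheaf already known to exist, not whether a presheaf is a sheaf; the paper's direct induction on $i$ (essentially your ``iterated extension of $\opn{gr}^j_{\m}$'' remark, made precise) is what actually does the work. Second, for part~(2) your ``separate lemma'' that any $\mcal{A}$ is recovered from completed localizations of its global sections is more than is needed. The paper argues more directly: since $\Gamma(U_s, \mcal{A})$ and $\Gamma(U_s, \mcal{A}')$ are $R$-deformations of $C_s$ receiving compatible maps from $A = \Gamma(U,\mcal{A})$ and $A' = \Gamma(U,\mcal{A}')$, the universal property of Theorem~\ref{thm:12}(2) extends $g$ uniquely to each principal open---no explicit identification of $\Gamma(U_s,\mcal{A})$ with a completed Ore localization is required.
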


Observe that part (2) implies that the deformation $\mcal{A}$ in part (1) is
unique up to a unique isomorphism. 

This result is repeated as Theorem \ref{thm:defs-sh.112} in the body of the
paper, and proved there. The proof relies on a detailed study of the Ore
localizations that are related to associative deformations. 

For the third intermediate result we again assume that the base field $\K$ has
characteristic $0$. Let $C$ be a smooth commutative $\K$ algebra (namely 
$U := \opn{Spec} C$ is a smooth algebraic variety over $\K$). 
We consider $A := R \hot C$ as an $R$-module, equipped with a distinguished
element $1_A := 1_R \ot 1_C$ and an augmentation $A \to C$. 
A {\em star product} on $A$ is an $R$-bilinear unital associative multiplication
$\star$, with unit $1_A$, that lifts the original multiplication
$c_1 \cdot c_2$ of $C$. 
Thus $(A, \star)$ is an associative $R$-deformation of $C$. 

By {\em gauge transformation} of the augmented $R$-module $A$ we mean an
isomorphism of $R$-modules $g : A \to A$, that commutes with the augmentation to
$C$ and fixes $1_A$. 
Suppose $\star$ and $\star'$ are two star products on $A$. We say that 
$g$ is a gauge transformation from $\star$ to $\star'$ if 
\begin{equation} \label{eqn:225}
g(a_1 \star a_2) = g(a_1) \star' g(a_2)  
\end{equation}
for all $a_1, a_2 \in A$.

A star product $\star$ is called {\em differential} if there is a (unique)
element 
$\om \in \lb \opn{MC}(\m \hot \mcal{D}_{\mrm{poly}}^{\mrm{nor}}(C))$
such that 
$c_1 \star c_2 = c_1 \cdot c_2 + \om(c_1, c_2)$
for all $c_1, c_2 \in C$. 
A gauge transformation $g :A \to A$ is called differential 
if $g = \exp(\ga)$ for some (unique)
$\ga \in  \m \hot \mcal{D}_{\mrm{poly}}^{\mrm{nor}, 0}(C)$.
Here 
$\mcal{D}_{\mrm{poly}}^{\mrm{nor}}(C) := 
\Gamma(U, \mcal{D}_{\mrm{poly}, U}^{\mrm{nor}})$.

\begin{thm} \label{thm:225}
Let $\K$ be a field of characteristic $0$, $C$ a smooth $\K$-algebra, and 
$R$ a parameter $\K$-algebra. 
Consider the augmented $R$-module $A := R \hot C$ with distinguished element 
$1_A$. 
\begin{enumerate}
\item Any star product on $A$ is gauge equivalent to a differential star
product. Namely, given a star product $\star$ on $A$, there exists a 
differential star product $\star'$, and a gauge transformation $g : A \to A$,
such that equation \tup{(\ref{eqn:225})} holds.

\item Let $\star$ and $\star'$ be star products on $A$, and let $g$ be a gauge
transformation of $A$ satisfying \tup{(\ref{eqn:225})}. Assume that $\star$ is a
differential star product. The following conditions are equivalent\tup{:}
\begin{enumerate}
\rmitem{i} The star product $\star'$ is also differential.

\rmitem{ii} The gauge transformation  $g$ is differential.
\end{enumerate}
\end{enumerate}
\end{thm}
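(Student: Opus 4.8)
The plan is to pass to the Hochschild cochain complex and argue by successive approximation along the powers of $\m$. Let $\g$ be the shifted, quantum-type normalized Hochschild cochain DG Lie algebra of $C$, with Hochschild differential $\d$ and Gerstenhaber bracket, and let $\g' := \mcal{D}_{\mrm{poly}}^{\mrm{nor}}(C) \subset \g$, the sub-DG Lie algebra of polydifferential operators. I would first record the elementary dictionary underlying the passage between $\cat{AssDef}$ and the Deligne crossed groupoid: assigning to a star product $\star$ on $A = R \hot C$ the cochain $(a_1, a_2) \mapsto a_1 \star a_2 - a_1 \cdot a_2$ (which is $R$-bilinear and $\m$-adically continuous, hence determined by its restriction to $C \times C$) identifies the set of star products with $\opn{MC}(\m \hot \g)$; normalization of the cochain is exactly the unit axiom for $1_A$, and the Maurer--Cartan equation is exactly associativity. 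Likewise, assigning to a gauge transformation $g$ of the augmented module $A$ the operator $\log(g)$ identifies the group of such $g$ with $\exp(\m \hot \g^0)$: any such $g$ equals $\opn{id}_A + h$ with $h$ an $R$-linear map into $\m A$, so $h^k(A) \subseteq \m^k A$, the series $\log(g)$ converges, and $g(1_A) = 1_A$ gives the normalization. Under these identifications \tup{(\ref{eqn:225})} becomes $\om' = \exp(\ga) \cdot \om$ for the Deligne gauge action with $\ga := \log g$, being differential means lying in $\m \hot \g'$, and the theorem reads: \tup{(1)} for every $\om \in \opn{MC}(\m \hot \g)$ there is $\ga \in \m \hot \g^0$ with $\exp(\ga)\cdot\om \in \m \hot {\g'}^{1}$; and \tup{(2)} for $\om, \om' \in \m \hot {\g'}^{1}$ and $\de \in \m \hot \g^0$ with $\om' = \exp(\de)\cdot\om$, one has $\de \in \m \hot {\g'}^{0}$. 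The implication \tup{(ii)}$\Rightarrow$\tup{(i)} of part \tup{(2)} is then immediate: if $\ga := \log g \in \m\hot{\g'}^{0}$, the gauge-action formula $\exp(\ga)\cdot\om = e^{\opn{ad}_{\ga}}(\om) - \tfrac{e^{\opn{ad}_{\ga}} - 1}{\opn{ad}_{\ga}}(\d\ga)$ keeps the result inside the sub-DG Lie algebra, so $\om'$, hence $\star'$, is differential.

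The one genuinely non-formal ingredient is that, $C$ being smooth over the characteristic-$0$ field $\K$, the inclusion $\g' \inj \g$ is a quasi-isomorphism of complexes --- equivalently, $\mcal{D}_{\mrm{poly}}^{\mrm{nor}}(C)$ computes the Hochschild cohomology of $C$; this is the Hochschild--Kostant--Rosenberg theorem together with smoothness, and is supplied by the treatment of the Hochschild cochain complex earlier in the paper. Put $\mfrak{q} := \g/\g'$. From the long exact cohomology sequence $\mfrak{q}$ is acyclic; and since $\g^{-1} = {\g'}^{-1} = C$ we have $\mfrak{q}^{-1} = 0$, so $\d\colon \mfrak{q}^0 \to \mfrak{q}^1$ is injective and every $\d$-cocycle in $\mfrak{q}^1$ is a coboundary. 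Both facts persist under every $\K$-linear functor, in particular under $(\m^i/\m^{i+1})\ot_\K(-)$ and $\m\hot(-)$. One caveat governs both arguments below: $\mfrak{q}$ is a DG module over $\g'$ but \emph{not} a quotient DG Lie algebra of $\g$ (since $\g'$ is not an ideal), so Gerstenhaber brackets must not be projected into $\mfrak{q}$ carelessly; the inductions are arranged so that only the differential $\d$, which does descend to $\mfrak{q}$, is ever pushed there.

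For part \tup{(1)}: set $\ga_0 := 0$, and suppose $\ga_n \in \m\hot\g^0$ has been constructed so that the image of $\om_n := \exp(\ga_n)\cdot\om$ in $\m\hot\mfrak{q}^1$ is divisible by $\m^{n+1}$; let $\eta \in (\m^{n+1}/\m^{n+2})\ot\mfrak{q}^1$ be the corresponding graded piece. Reducing the Maurer--Cartan equation $\d\om_n = -\tfrac12[\om_n,\om_n]$ modulo $\m^{n+2}$ and projecting to $\mfrak{q}$ yields $\d\eta = 0$: in that filtration degree the bracket involves only products of two pieces of $\om_n$ of $\m$-degrees $i,j\geq 1$ with $i+j = n+1$, hence $i,j\leq n$; by the inductive hypothesis these lie in $\g'$, so their bracket lies in $\g'$ and maps to $0$ in $\mfrak{q}$, while $\d\om_n$ maps to $\d\eta$. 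By acyclicity $\eta = \d\zeta$ for some $\zeta \in (\m^{n+1}/\m^{n+2})\ot\mfrak{q}^0$; choose a lift $\til{\zeta} \in \m^{n+1}\hot\g^0$ and let $\ga_{n+1}$ be a Baker--Campbell--Hausdorff product of $\ga_n$ with $\mp\til{\zeta}$, so $\om_{n+1} = \exp(\mp\til{\zeta})\cdot\om_n$. Because $\til{\zeta}\in\m^{n+1}$ while $\om_n\in\m$, every term of the gauge action beyond $\pm\d\til{\zeta}$ lies in $\m^{n+2}$; hence $\om_{n+1} \equiv \om_n \pm \d\til{\zeta} \pmod{\m^{n+2}}$, its degree-$(n+1)$ $\mfrak{q}$-piece becomes $\eta - \d\zeta = 0$, its lower $\mfrak{q}$-pieces are unchanged, and $\ga_{n+1} \equiv \ga_n \pmod{\m^{n+1}}$. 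By $\m$-adic completeness $\ga := \lim_n\ga_n$ exists, and then $g := \exp(\ga)$ together with the star product corresponding to $\exp(\ga)\cdot\om \in \m\hot{\g'}^{1}$ satisfies \tup{(\ref{eqn:225})}.

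For part \tup{(2)}, direction \tup{(i)}$\Rightarrow$\tup{(ii)}: by the same device. Write $\de := \log g \in \m\hot\g^0$, with image $\ol{\de} \in \m\hot\mfrak{q}^0$, and show by induction on $n$ that $\ol{\de}$ is divisible by $\m^{n+1}$. Granting this for $n$, pick a lift $\de' \in \m\hot{\g'}^{0}$ of $\de$ modulo $\m^{n+1}$ and replace $(\de, \om')$ by $(\de'', \om'') := \bigl(\mathrm{BCH}(-\de',\de),\, \exp(-\de')\cdot\om'\bigr)$; since $\de' \in \g'$ we still have $\om'' \in \m\hot{\g'}^{1}$, while $\de'' \in \m^{n+1}\hot\g^0$ and its degree-$(n+1)$ piece has the same image $\ol{\de}^{(n+1)} \in (\m^{n+1}/\m^{n+2})\ot\mfrak{q}^0$, and $\om'' = \exp(\de'')\cdot\om$. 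Now $\de''\in\m^{n+1}$ and $\om\in\m$, so exactly as above $\om'' \equiv \om \pm \d\de'' \pmod{\m^{n+2}}$; since $\om, \om'' \in \m\hot{\g'}^{1}$, reducing modulo $\m^{n+2}$ and projecting to $\mfrak{q}$ gives $\d(\ol{\de}^{(n+1)}) = 0$, whence $\ol{\de}^{(n+1)} = 0$ by injectivity of $\d$ on $\mfrak{q}^0$. Hence $\ol{\de} = 0$, so $\de \in \m\hot{\g'}^{0}$ and $g$ is differential; uniqueness of $\ga$ with $g = \exp(\ga)$ is clear. The main obstacle throughout is the quasi-isomorphism $\g'\inj\g$ --- where HKR and the structure of rings of differential operators enter; everything else is bookkeeping with the Deligne gauge action, carefully routed around the absence of a DG Lie structure on $\mfrak{q}$.
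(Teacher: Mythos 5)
Your argument is correct, but it takes a genuinely different route from the paper's. Both proofs hinge on the same non-formal input, namely that the inclusion $\mcal{D}^{\mrm{nor}}_{\mrm{poly}}(C) \to \mcal{C}^{\mrm{nor}}_{\mrm{shc}}(C)$ is a quasi-isomorphism (smoothness, $\opn{char}\K=0$; in the paper this is the citation of \cite[Corollary 4.12]{Ye2}), and both use the dictionary of Proposition \ref{prop:5} between star products, gauge transformations and the Deligne data. From there the paths diverge. For part (1) the paper simply invokes \cite[Theorem 4.2]{Ye5} (a quasi-isomorphism of pronilpotent DG Lie algebras induces a bijection on $\ol{\mrm{MC}}$ over a complete parameter algebra) and then Proposition \ref{prop:5}(3); you instead give a self-contained successive-approximation argument, inductively killing the image of the MC element in the acyclic quotient complex $\mfrak{q} = \g/\g'$, in effect reproving the needed surjectivity half of that theorem in the special case of an inclusion with acyclic cokernel --- and you correctly route the induction so that only $\d$, not the Gerstenhaber bracket, is pushed into $\mfrak{q}$. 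For part (2), (i)$\Rightarrow$(ii), the paper follows Etingof/Kashiwara--Schapira: expand $g = \sum_i r_i \ot \ga_i$ in a filtered basis of $R$, compare coefficients in $g(c\star d)=g(c)\star' g(d)$ to show $[\ga_i, d]$ is a differential operator for every $d\in C$, and conclude by Grothendieck's characterization of differential operators; you instead run an induction on $\m$-adic order using injectivity of $\d$ on $\mfrak{q}^0$, which you extract from acyclicity of $\mfrak{q}$. These key lemmas are essentially equivalent (injectivity of $\d$ on $\mfrak{q}^0$ is precisely the statement that a normalized endomorphism with bidifferential Hochschild coboundary is a differential operator, which is what Grothendieck's characterization gives directly), so your treatment is more uniform --- one homological input drives both parts and replaces the coefficient bookkeeping --- whereas the paper's part (2) is more economical in its hypotheses, since it needs only Grothendieck's characterization and not the HKR-type quasi-isomorphism at all. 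The only blemishes in your write-up are cosmetic: the gauge-action sign conventions and the BCH ordering are hedged with $\pm$, and the identification of gauge transformations and star products with $\exp(\m\hot\g^0)$ and $\opn{MC}(\m\hot\g^1)$ is asserted quickly, but this is exactly Lemma \ref{lem:dglie.111} and Proposition \ref{prop:5}, so nothing is missing.
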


This is a combination of Theorems \ref{thm:4} and \ref{thm:5} in Section 
\ref{sec:ploy-diff}. 
It relies on results from \cite{Ye2} on the structure of the DG Lie algebra 
$\mcal{D}_{\mrm{poly}}^{\mrm{nor}}(C)$.
Part (2) was communicated to us by P. Etingof; it is similar to 
\cite[Proposition 2.2.3]{KS}.

Let us now discuss how this paper relates to other work in this field. The role
of the DG Lie algebras  $\mcal{T}_{\mrm{poly}}$ and 
$\mcal{D}_{\mrm{poly}}$ in deformation quantization goes back a long time; 
most notably it figured in the groundbreaking paper \cite{Ko1} of M. Kontsevich
from 1997. See also the papers \cite{Ko1, CKTB, BGNT, Ye1, VdB} and the
references therein. 

Complete deformations (i.e.\ $R$-deformations where $R$ is a complete ring)
were not treated properly before, with the exception of the work of M. Kashiwara
and P. Schapira, who considered $R = \K[[\hbar]]$ (see \cite{KS} and other
papers). Most authors just dealt with nilpotent deformations (i.e.\ $R$ is 
an artinian ring). Our own work in \cite{Ye1} was flawed in this respect --
see \cite[Remark 8.14]{Ye5}. Indeed the papers \cite{Ye4} and \cite{Ye5} came
into existence to remedy this flaw! The present paper and \cite{Ye6} attempt
to provide a correct treatment of complete deformations and their twisted
versions. 

The differential aspect of associative deformations of smooth affine varieties
(Theorem \ref{thm:225}) was 
not well-understood previously. In our paper \cite{Ye1} we demanded as a
condition that associative deformations should be locally differential
(cf.\ \cite[Definition 1.6]{Ye1}). 
Due to Theorems \ref{thm:220} and \ref{thm:225} we now know that this condition
is redundant. It is interesting to note that in the complex analytic case the
question is still open (see \cite[Remark 2.2.7]{KS}). 
 
Crossed groupoids (or $2$-groupoids) appeared in this subject already
in 1994 -- see P. Deligne's letter to L. Breen \cite{De}, and Breen's
classification
of gerbes in terms of crossed modules \cite{Br}. A more recent use of crossed
groupoids to classify stacks on a topological space can be found in \cite{DP}. 
The papers \cite{De, Ge, BGNT} only treated the Deligne crossed groupoid of a
DG Lie algebra.
As far as we know there is nothing in prior literature resembling Theorem
\ref{thm:1}, namely giving the equivalence from the Deligne
crossed groupoid to the crossed groupoid of geometric origin 
$\cat{AssDef}(R, \mcal{O}_X)$ (resp.\ $\cat{PoisDef}(R, \mcal{O}_X)$) -- even 
for nilpotent parameters.
As already mentioned, this equivalence is crucial
for proving twisted deformation quantization in \cite{Ye6}.

\medskip \noindent
\textbf{Acknowledgments.}
Work on this paper began together with Fredrick Leitner, and I wish to
thank him for his contributions. Many of the ideas in this paper are influenced
by the work of Maxim Kontsevich, and I am grateful to him for discussing this
material with me. Thanks also to Michael Artin, Pavel Etingof, Damien
Calaque, Michel Van den Bergh, Pierre Deligne, Lawrence Breen, Pierre Schapira,
James Stasheff, Pietro Polesello and Matan Prezma
for their assistance on various aspects of the paper.

%\cleardoublepage
\section{Crossed Groupoids}
\label{sec:crossed}
\numberwithin{equation}{section}

In this section we review the categorical (or combinatorial) concept of
{\em crossed groupoid}.  

Let $G$ be a groupoid (i.e.\ a category in which all morphisms are invertible),
with set of objects $\opn{Ob}(G)$. Given $\om, \om' \in \opn{Ob}(G)$
we denote by $G(\om, \om) := \opn{Hom}_G(\om, \om')$, the set of morphisms. 
We also write $G(\om) := G(\om, \om)$, the automorphism group of the object
$\om$. For $g \in G(\om, \om')$ and $h \in G(\om)$ we let 
\begin{equation} \label{equ:240}
\opn{Ad}_G(g)(h) := g \circ h \circ g^{-1} \in G(\om') .
\end{equation}

Suppose $N$ is another groupoid, such that 
$\opn{Ob}(N) = \opn{Ob}(G)$. 
An {\em action} $\Psi$ of $G$ on 
$N$ is a collection of group isomorphisms 
$\Psi(g) : N(\om) \iso N(\om')$
for all $\om, \om' \in \opn{Ob}(G)$ and $g \in G(\om, \om')$, 
such that 
$\Psi(h \circ g) = \Psi(h) \circ \Psi(g)$
whenever $g$ and $h$ are composable, and 
$\Psi(1_{\om})$ is the identity automorphism of $N(\om)$.
For instance, there is the action $\opn{Ad}_G$ of $G$ on itself, described in
equation (\ref{equ:240}).

\begin{dfn} \label{dfn:cosim.101}
A {\em crossed groupoid} is a structure 
\[ G = 
( G_1, G_2, 
\opn{Ad}_{G_1 \crvar G_{2}}, \opn{D} ) \]
consisting of:
\begin{itemize}
\item Groupoids $G_1$ and $G_2$, such that 
$G_2$ is totally disconnected, and 
$\opn{Ob}(G_1) = \opn{Ob}(G_2)$. 
We write $\opn{Ob}(G) := \opn{Ob}(G_1)$. 

\item An action $\opn{Ad}_{G_1 \crvar G_{2}}$ of 
$G_1$ on $G_2$, called the {\em twisting}. 

\item A morphism of groupoids (i.e.\ a functor) 
$\opn{D} : G_2 \to G_1$
called the {\em feedback}, which is the identity on objects.
\end{itemize}

These are the conditions:
\begin{enumerate}
\rmitem{i} The morphism $\opn{D}$ is $G_1$-equivariant with respect to
the actions $\opn{Ad}_{G_1 \crvar G_{2}}$ and
$\opn{Ad}_{G_1}$. Namely 
\[ \opn{D}(\opn{Ad}_{G_1 \crvar G_{2}}(g)(a)) =
\opn{Ad}_{G_1}(g)(\opn{D}(a)) \]
in the group $G_1(\om')$, for any $\om, \om' \in \opn{Ob}(G)$, 
$g \in G_1(\om, \om')$ and $a \in G_2(\om)$.

\rmitem{ii} For any $\om \in \opn{Ob}(G)$ and 
$a \in G_2(\om)$ there is equality
\[ \opn{Ad}_{G_1 \crvar G_{2}}(\opn{D}(a)) =
\opn{Ad}_{G_2(\om)}(a) , \]
as automorphisms of the group $G_2(\om)$.
\end{enumerate}
\end{dfn}

We sometimes refer to morphisms in the groupoid $G_1$ as {\em
$1$-morphisms}, or as {\em gauge transformations}. 
For an object $\om \in \opn{Ob}(G)$, elements of the group $G_2(\om)$ 
are sometimes called {\em $2$-morphisms} or {\em inner gauge transformations}.
The groupoid $G_1$ is called the {\em $1$-truncation} of the crossed groupoid
$G$.

\begin{exa}
Consider any groupoid $G_1$, and let $G_2$ be the associated totally
disconnected
groupoid (gotten be removing all morphisms between distinct objects). 
(More generally one can take a normal subgroupoid $N \subset G$, in the sense 
of \cite[Definition 3.1]{Ye3}, and define $G_2 := N$.)
Define the twisting  $\opn{Ad}_{G_1 \crvar G_{2}} := \opn{Ad}_{G_1}$,
and the feedback $\opn{D}$ is the inclusion. 
This is easily seen to be a crossed groupoid. 
\end{exa}

Let $\cat{Grp}$ be the category of groups. For a groupoid $G$ there is a
functor 
\begin{equation} \label{eqn:300}
\opn{Aut}_G : G \to \cat{Grp}
\end{equation}
which on objects is $\opn{Aut}_G(\om) := G(\om)$. 
For a morphism $g : \om \to \om'$ in $G$ the group isomorphism 
$\opn{Aut}_G(g) : \opn{Aut}_G(\om) \to \opn{Aut}_G(\om')$ is 
$\opn{Aut}_G(g) := \opn{Ad}_G(g)$, cf.\ (\ref{equ:240}). 

\begin{prop} \label{prop:200}
Let 
$G = ( G_1, G_2, \opn{Ad}_{G_1 \crvar G_{2}}, \opn{D} )$
be a crossed groupoid. 
\begin{enumerate}
\item For $i \in \opn{Ob}(G)$ let $\opn{IG}(i) := G_2(i)$, and for 
$g \in G_1(i, j)$ let 
$\opn{IG}(g) := \opn{Ad}_{G_1 \crvar G_{2}}(g)$. 
Then 
\[ \opn{IG} : G_1 \to \cat{Grp} \]
is a functor. 

\item For $i \in \opn{Ob}(G)$ and $a \in \opn{IG}(i)$ let 
$\opn{ig}(a) := \opn{D}(a) \in G_1(i)$. Then 
\[ \opn{ig} : \opn{IG} \to \opn{Aut}_{G_1} \]
is a natural transformation of functors $G_1 \to \cat{Grp}$. 

\item The data $( G_1, G_2, \opn{Ad}_{G_1 \crvar G_{2}}, \opn{D} )$
can be recovered from the groupoid $G_1$, the functor 
$\opn{IG} : G_1 \to \cat{Grp}$ and the 
natural transformation 
$\opn{ig} : \opn{IG} \to \opn{Aut}_{G_1}$.
\end{enumerate}
\end{prop}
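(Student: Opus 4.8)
The plan is to prove parts (1), (2), (3) in that order, treating each as a straightforward unwinding of Definition \ref{dfn:cosim.101} together with its axioms (i) and (ii). Throughout I will use the notation of the definition, and I will freely refer to the action axioms for $\opn{Ad}_{G_1 \crvar G_2}$ and to the functoriality of $\opn{D}$.

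For part (1), I would observe that $\opn{IG}$ assigns to each object $i$ the group $G_2(i)$ and to each $1$-morphism $g \in G_1(i,j)$ the group isomorphism $\opn{Ad}_{G_1 \crvar G_2}(g) : G_2(i) \to G_2(j)$, which is a group isomorphism by the very definition of an action. The two functoriality conditions — namely $\opn{IG}(1_i) = \opn{id}_{G_2(i)}$ and $\opn{IG}(h \circ g) = \opn{IG}(h) \circ \opn{IG}(g)$ for composable $g, h$ in $G_1$ — are precisely the two defining properties of the action $\opn{Ad}_{G_1 \crvar G_2}$. So part (1) is immediate once the action axioms are recalled.

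For part (2), I would check that $\opn{ig} = \{\opn{D}_i : G_2(i) \to G_1(i)\}_{i \in \opn{Ob}(G)}$, where $\opn{D}_i$ is $\opn{D}$ restricted to automorphisms of $i$, is a natural transformation from $\opn{IG}$ to $\opn{Aut}_{G_1}$. Each $\opn{D}_i$ is a group homomorphism because $\opn{D}$ is a functor (and $\opn{D}$ is the identity on objects, so it does send $G_2(i)$ into $G_1(i)$). Naturality amounts to the commutativity, for each $g \in G_1(i,j)$, of the square with horizontal maps $\opn{IG}(g) = \opn{Ad}_{G_1 \crvar G_2}(g)$ and $\opn{Aut}_{G_1}(g) = \opn{Ad}_{G_1}(g)$ and vertical maps $\opn{D}_i$, $\opn{D}_j$; this is exactly the $G_1$-equivariance of $\opn{D}$ stated in axiom (i). So part (2) reduces to axiom (i).

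Part (3) is the reconstruction statement, and it is the only part requiring more than bookkeeping, though still routine. Starting from the triple $(G_1, \opn{IG}, \opn{ig})$, I would recover $\opn{Ob}(G) := \opn{Ob}(G_1)$; recover the totally disconnected groupoid $G_2$ by setting $G_2(i) := \opn{IG}(i)$ and declaring all morphisms between distinct objects empty; recover the twisting as $\opn{Ad}_{G_1 \crvar G_2}(g) := \opn{IG}(g)$ using functoriality of $\opn{IG}$; and recover the feedback on automorphism groups as $\opn{D}|_{G_2(i)} := \opn{ig}_i$. The point to verify is that this reconstructed data is well-defined and satisfies axioms (i) and (ii): axiom (i) comes for free from the naturality of $\opn{ig}$, but axiom (ii) — the equality $\opn{Ad}_{G_1 \crvar G_2}(\opn{D}(a)) = \opn{Ad}_{G_2(i)}(a)$ as automorphisms of $G_2(i)$ — is an extra constraint that must already have held in the original $G$, so one simply notes it is inherited. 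The only genuine subtlety, and the step I expect to be the main obstacle, is making precise in what sense "recovered" is meant: one should really say that the assignment $G \mapsto (G_1, \opn{IG}, \opn{ig})$ is a bijection (or an equivalence of appropriate categories) between crossed groupoids and triples consisting of a groupoid, a functor to $\cat{Grp}$, and a natural transformation to $\opn{Aut}_{G_1}$ satisfying the analogue of axiom (ii); I would state this cleanly and then check both composites are the identity, which is a direct unwinding of the definitions above.
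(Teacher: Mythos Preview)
Your proposal is correct and matches the paper's approach: the paper's entire proof is the single sentence ``This is immediate from the definitions,'' and your write-up is exactly the unwinding of those definitions that justifies that sentence. One minor remark on part~(3): the proposition only asserts that the original data can be \emph{recovered} (i.e.\ the assignment $G \mapsto (G_1, \opn{IG}, \opn{ig})$ is injective), so your reconstruction of $G_2$, the twisting, and the feedback already suffices; the further discussion of characterizing the essential image via an analogue of axiom~(ii) is correct but goes beyond what is being claimed.
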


\begin{proof}
This is immediate from the definitions. 
\end{proof}

\begin{dfn} \label{dfn:220}
Suppose 
$H = ( H_1, H_2, 
\opn{Ad}_{H_1 \crvar H_{2}}, \opn{D} )$ 
is another crossed groupoid. A {\em morphism of crossed groupoids}
$\Phi : G \to H$
is a pair of morphisms of groupoids 
$\Phi_i : G_i \to H_i$, $i = 1, 2$, that are equal on objects, 
and respect the twistings and the feedbacks. 
\end{dfn}

\begin{dfn} \label{dfn:221}
A morphism of crossed groupoids $\Phi : G \to H$ is called an
{\em equivalence} if it satisfies these conditions:
\begin{enumerate}
\rmitem{i} $\Phi_1 : G_1 \to H_1$ is an equivalence of groupoids; namely 
it is essentially surjective on objects, and for every 
$\om \in \opn{Ob}(G)$ the group homomorphism 
$\Phi_1 : G_1(\om) \to H_1(\Phi(\om))$
is bijective. 
 
\rmitem{ii} For any $\om \in \opn{Ob}(G)$ the group homomorphism 
$\Phi_2 : G_2(\om) \to H_2(\Phi(\om))$
is bijective. 
\end{enumerate}
\end{dfn}

It is easy to see that an equivalence of crossed groupoids
$\Phi : G \to H$ admits a quasi-inverse $H \to G$ (we leave it to the reader to
spell out what this means).

\begin{rem} \label{rem:cosim.101}
A crossed groupoid is better known as a {\em strict $2$-groupoid},
or a {\em crossed module over a groupoid}, or a {\em $2$-truncated crossed
complex}; see \cite{Bw, Ye5}. When $\opn{Ob}(G)$ is a singleton then $G$ is just
a crossed module. 

Traditionally papers used $2$-groupoid language to discuss descent (cf.\
\cite{De} and \cite{BGNT}). In our work we realized that the crossed groupoid
language is
more effective and natural in this context.
\end{rem}

\begin{rem}
We ignore issue of set theory (like the size of the set of objects $\opn{Ob}(G)$
of a groupoid $G$). The blanket assumptions we rely on are explained in 
\cite[Section 1]{Ye3}.
\end{rem}

%\cleardoublepage
\section{Deformations of Algebras}
\label{sec:defs-alg}
\numberwithin{equation}{section}

In this section we give the basic definitions and a few initial
results. 

Here, and in the rest of the paper, we work over a base field $\K$. All
algebras are by default $\K$-algebras, and all homomorphism between
algebras are over $\K$. For $\K$-modules $M, N$ we write 
$M \ot N := M \ot_{\K} N$  and
$\opn{Hom}(M, N) := \opn{Hom}_{\K}(M, N)$.
By default, associative algebras are assumed to be unital, and
commutative algebras are assumed to be associative (and unital).
Homomorphisms between unital algebras always preserve units.

\begin{dfn} \label{dfn:13}
A {\em parameter $\K$-algebra} is a 
complete local noetherian commutative
$\K$-algebra $R$, with maximal ideal $\m$ and residue field 
$R / \m = \K$.
We sometimes say that $(R, \m)$ is a parameter $\K$-algebra.
For $i \geq 0$ we let
$R_i := R / \m^{i+1}$.
The $\K$-algebra homomorphism $R \to \K$ is called the {\em
augmentation} of $R$.

Suppose $(R', \m')$ is another parameter $\K$-algebra. 
By {\em homomorphism of parameter algebras} we mean a 
$\K$-algebra homomorphism $\sigma : R \to  R'$.
\end{dfn}

Note that $R$ can be recovered from $\m$, since
$R = \K \oplus \m$ as $\K$-modules, with the obvious multiplication. 
A homomorphism $\sigma : R \to  R'$ necessarily satisfies 
$\sigma(\m) \subset \m'$; so letting $R'_i  := R' / \m'^{\, i+1}$, 
there is an induced homomorphism $R_i \to R'_i$.

\begin{exa}
The most important parameter algebra in deformation theory is
$\K[[\hbar]]$, the ring of formal power series in the variable
$\hbar$. A $\K[[\hbar]]$-deformation (see below) is sometimes called
a ``$1$-parameter formal deformation''. 
\end{exa}

Let $M$ be an $R$-module. For any $i \geq 0$ there is a canonical bijection
$R_i \otimes_R M \cong M / \m^{i+1} M$. 
The $\m$-adic completion of $M$ is the $R$-module
$\what{M} :=  \lim_{\leftarrow i}\, (R_i \otimes_R M)$. 
The module $M$ is called {\em $\m$-adically complete} if the
canonical homomorphism
$M \to \what{M}$ is bijective. (Some texts, including \cite{Bo1}, 
would say that  ``$M$ is separated and complete''.)
Since $R$ is noetherian, the $\m$-adic completion of any $R$-module is 
$\m$-adically complete; see \cite[Corollary 3.5]{Ye4}. (This may be false when
$R$ is not noetherian.) 

Given a $\K$-module $V$ and an $R$-module $M$, we let
$M \hot V := \what{M \ot V}$, the $\m$-adic completion of the $R$-module 
$M \ot V$.  

\begin{dfn} \label{dfn:31}
Let $(R, \m)$ be a parameter $\K$-algebra.
An {\em $\m$-adic system of $R$-modules} is a collection 
$\{ M_i \}_{i \in \mbb{N}}$ of $R$-modules, together with a
collection $\{ \psi_i \}_{i \in \mbb{N}}$ of 
homomorphisms 
$\psi_i : M_{i+1} \to M_i$.
The conditions are:
\begin{enumerate}
\rmitem{i} For every $i$ one has $\m^{i+1} M_i = 0$. Thus 
$M_i$ is an $R_i$-module.
\rmitem{ii} For every $i$ the $R_i$-linear homomorphism
$R_i \otimes_{R_{i+1}} M_{i+1} \to M_i$
induced by $\psi_i$ is an isomorphism. 
\end{enumerate}
\end{dfn}

The following (not so well known) facts will be important for us.

\begin{prop} \label{prop:12}
Let $(R, \m)$ be a parameter $\K$-algebra, and let $M$ be an
$R$-module. Define 
$M_i := R_i \otimes_R M$.
The following conditions are equivalent\tup{:}
\begin{enumerate}
\rmitem{i} The $R$-module $M$ is flat and $\m$-adically complete.

\rmitem{ii} There is an $\m$-adic system of $R$-modules
$\{ N_i  \}_{i \in \mbb{N}}$, such that each $N_i$ is flat over
$R_i$, and an isomorphism of $R$-modules
$M \cong \lim_{\leftarrow i}\, N_i$.

\rmitem{iii} There is an isomorphism of $R$-modules 
$M \cong R \hot V$ for some $\K$-module $V$.

\rmitem{iv} The $R$-module $M$ is $\m$-adically complete, and for
every $\K$-linear homomorphism $M_0 \to M$ splitting the canonical
surjection $M \to M_0$, the induced $R$-linear homomorphism
$R \hot M_0 \to M$ is bijective.
\end{enumerate}

Moreover, when these conditions hold, the induced homomorphisms
$R_i \ot V \to M_i \to N_i$
are bijective for every $i$.
\end{prop}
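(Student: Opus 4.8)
The plan is to prove the cycle of implications (i) $\Rightarrow$ (iii) $\Rightarrow$ (ii) $\Rightarrow$ (i), then handle (iv) separately, and finally verify the ``moreover'' clause about the bijectivity of $R_i \ot V \to M_i \to N_i$. The key technical input is the theory of $\m$-adic systems together with the noetherian hypothesis on $R$, which guarantees that completions behave well (via \cite[Corollary 3.5]{Ye4} and related results); I will invoke these freely rather than reprove them.

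For (i) $\Rightarrow$ (iii): since $M$ is complete, $M \cong \lim_{\leftarrow i} M_i$ with $M_i = R_i \ot_R M$. Choose a $\K$-linear splitting $s : M_0 \to M$ of the surjection $M \to M_0$, and set $V := M_0$. The map $s$ extends to an $R$-linear map $R \ot V \to M$, hence (by completeness of the target) to $R \hot V \to M$. To see this is bijective, reduce mod $\m^{i+1}$: it suffices to show each $R_i \ot_{R_0} V \to M_i$ is bijective. Here flatness of $M$ enters: $M$ flat over $R$ forces each $M_i$ to be flat over $R_i$, and because $R_i$ is a local artinian ring flatness means freeness, so $M_i \cong R_i \ot_{R_0} (M_i / \m M_i) = R_i \ot_{R_0} M_0$, and one checks the splitting realizes exactly this identification. (The main obstacle is organizing this reduction cleanly — one wants the chosen splitting $s$ to be compatible with the free-module structure on each $M_i$, which requires a small Nakayama-style argument lifting a basis of $M_0$.) For (iii) $\Rightarrow$ (ii): take $N_i := R_i \ot V$ with the evident transition maps; this is visibly an $\m$-adic system with each $N_i$ flat (even free) over $R_i$, and $R \hot V = \lim_{\leftarrow i} N_i$ by definition of the completed tensor product. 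For (ii) $\Rightarrow$ (i): if $M \cong \lim_{\leftarrow i} N_i$ for an $\m$-adic system with $N_i$ flat over $R_i$, then $M$ is $\m$-adically complete because an inverse limit of an $\m$-adic system is complete (using that $R$ is noetherian), and $M$ is flat over $R$ because flatness can be checked after base change to the $R_i$ and $R_i \ot_R M \cong N_i$ is flat over $R_i$ — more precisely one uses the local criterion for flatness, via the fact that $M$ is $\m$-adically separated and each $N_i$ flat over $R_i$ implies $\operatorname{Tor}_1^R(M, R/\m) = 0$, hence $M$ is flat. I expect this last step, pinning down ``$\m$-adic system with flat pieces $\Rightarrow$ flat limit,'' to be the subtlest point, and I would lean on the local flatness criterion of \cite{Bo1} together with the noetherian hypothesis.

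For condition (iv): the implication (i) $\Rightarrow$ (iv) is essentially contained in the argument for (i) $\Rightarrow$ (iii) above — any splitting $M_0 \to M$ induces $R \hot M_0 \to M$, and the same mod-$\m^{i+1}$ reduction using freeness of $M_i$ over $R_i$ shows it is bijective for \emph{every} choice of splitting. Conversely (iv) $\Rightarrow$ (iii) is trivial once we know $M_0$ is a $\K$-module (take $V := M_0$). So (iv) slots into the equivalence class without extra work.

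Finally, the ``moreover'' statement: under the equivalent conditions, fix the isomorphism $M \cong R \hot V$ and the resulting $\m$-adic system $N_i$. Applying $R_i \ot_R(-)$ to $M \cong \lim_{\leftarrow j} N_j$ and using Theorem \ref{thm:220}(2a)-style commutation (or directly, since for an $\m$-adic system $R_i \ot_{R} \lim_{\leftarrow j} N_j \cong N_i$ because the transition maps stabilize the quotient) gives $M_i \cong N_i$. And $R_i \ot V \to M_i$ is bijective because $M_i = R_i \ot_R M = R_i \ot_R (R \hot V) = R_i \ot_{R} (R \ot V)/(\text{completion})$, which collapses to $R_i \ot V$ since $R_i$ kills $\m^{i+1}$ and the completion only adds higher-order terms. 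I would write this last paragraph concisely, as it is bookkeeping once the core equivalences are in hand.
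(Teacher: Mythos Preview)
Your outline is correct and the cycle of implications is sound, but your route differs from the paper's in two notable ways.

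First, for the equivalence of (i), (ii), (iii) the paper simply cites \cite[Corollary 4.5]{Ye4}, which packages exactly the facts you are sketching (the notion of ``$\m$-adically free'' module, completeness and flatness of the limit of a flat $\m$-adic system). Your direct argument is fine, but the step you correctly flag as subtlest --- showing that $M = \lim_{\leftarrow i} N_i$ is flat and $\m$-adically complete, in particular that $R_i \ot_R M \to N_i$ is bijective --- genuinely requires the noetherian hypothesis (finite generation of $\m^{i+1}$ to pull the ideal through the inverse limit) and is precisely what \cite{Ye4} establishes. So you are not avoiding the technical core; you are re-deriving it. The advantage of your approach is that it makes the mechanism visible (freeness of $M_i$ over the artinian local ring $R_i$, Nakayama at each finite level); the advantage of the paper's approach is brevity.

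Second, for (i) $\Rightarrow$ (iv) the paper gives a different and slightly slicker argument than your level-by-level reduction: given any splitting, the induced map $\phi : R \hot M_0 \to M$ is surjective by the Complete Nakayama Lemma \cite[Theorem 2.11]{Ye4}; then, since $M$ is $\m$-adically free, $\phi$ admits a right inverse $\psi : M \to R \hot M_0$, and $\psi$ also lifts $\bsym{1}_{M_0}$, so $\psi$ is surjective by the same Nakayama argument, hence bijective. This avoids checking anything mod $\m^{i+1}$ and works uniformly over all splittings at once. Your reduction-mod-$\m^{i+1}$ argument is also correct, but the paper's projectivity trick is worth knowing.
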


\begin{proof}
When $M$ is finitely generated or $\m$ is nilpotent, the equivalence of
conditions (i), (ii) and (iii) is \cite[Corollary II.3.2]{Bo1}. For the general
case we need the results of \cite{Ye4}. The module $R \hot V$ is the $\m$-adic
completion of the free $R$-module 
$R \ot V$; so by \cite[Proposition 3.13]{Ye4} the module $R \hot V$ is
$\m$-adically free \cite[Definition 3.11]{Ye4}. Now \cite[Corollary 4.5]{Ye4}
says that conditions (i), (ii) and (iii) are equivalent. 

Since $\K$-linear splittings $M_0 \to M$ exist, condition (iv)
directly implies condition (iii), with $V := M_0$. 
As for the converse, assume that $M$ is $\m$-adically free, and take any
splitting $M_0 \to M$. We get an $R$-linear homomorphism 
$\phi : R \hot M_0 \to M$ lifting $\bsym{1}_{M_0}$.
By the Complete Nakayama \cite[Theorem 2.11]{Ye4}, $\phi$ is surjective.
Since $M$ is $\m$-adically free, there is a homomorphism 
$\psi : M \to R \hot M_0$ that's a right inverse to $\phi$, i.e.\ 
$\phi \circ \psi = \bsym{1}_M$. 
But $\psi$ also lifts $\bsym{1}_{M_0}$, so $\psi$ is surjective. 
We see that $\psi$ is bijective and $\phi = \psi^{-1}$. 
 
The last assertion is a consequence of \cite[Theorem 4.3]{Ye4}.
\end{proof}

Suppose $A$ is an $R$-algebra. We say $A$ is $\m$-adically complete,
or flat, if it is so as an $R$-module. 

\begin{dfn} \label{dfn:1}
Let $\K$ be a field, $(R, \m)$ a parameter $\K$-algebra, and $C$ a commutative
$\K$-algebra. An {\em associative $R$-deformation of $C$} is a flat
$\m$-adically complete associative $R$-algebra $A$, together
with a $\K$-algebra isomorphism
$\psi : \K \otimes_R A \to C$, called an {\em augmentation}.

Given another such deformation $A'$, a {\em gauge transformation}
$g : A \to A'$ is an $R$-algebra isomorphism that commutes with the
augmentations to $C$.

We denote by  $\cat{AssDef}(R, C)$ the groupoid of associative
$R$-deformations of $C$, and gauge transformations between them.
\end{dfn}

Note that an associative $R$-deformation $A$ of $C$ is a unital algebra (by our
conventions), and a gauge transformation $g : A \to A'$ sends the unit 
$1_A$ to the unit $1_{A'}$.

Due to Proposition \ref{prop:12}, for any associative $R$-deformation $A$ of $C$
there exists an isomorphism of augmented $R$-modules 
$R \hot C \iso A$, sending $1_R \otimes 1_C \mapsto 1_A$.

Let $A$ be a commutative $R$-algebra. An $R$-bilinear Poisson bracket
on $A$ is an $R$-bilinear function
$\{ -,- \} : A \times A \to A$
which is a Lie bracket (i.e.\ it is antisymmetric and satisfies
the Jacobi identity), and also
is a derivation in each of its arguments. The pair $(A, \{ -,- \})$
is called a {\em Poisson $R$-algebra}. A homomorphism of Poisson
$R$-algebras $f : A \to A'$ is an algebra homomorphism that respects
the Poisson brackets. 

\begin{dfn} \label{dfn:2}
Let $\K$ be a field, $(R, \m)$ a parameter $\K$-algebra, and $C$ a commutative
$\K$-algebra. We consider $C$ as a Poisson $\K$-algebra with the zero bracket.
A {\em Poisson $R$-deformation of $C$} is a
flat $\m$-adically complete Poisson $R$-algebra $A$,
together with an isomorphism of Poisson $\K$-algebras
$\psi : \K \otimes_R A \to C$, called an {\em augmentation}.

Given another such deformation $A'$, a {\em gauge transformation}
$g : A \to A'$
is an $R$-algebra isomorphism that respects the Poisson brackets and
commutes with the augmentations to $C$.

We denote by  $\cat{PoisDef}(R, C)$ the groupoid of Poisson
$R$-deformations of $C$, and gauge transformations between them.
\end{dfn}

\begin{rem}
If the ring $C$ is noetherian, then any Poisson or associative
$R$-deformation of $C$ is also a (left and right) noetherian ring. See
\cite{KS} or \cite{Bo1}. We are not going to need this fact.
\end{rem}

Suppose $(R', \m')$ is another parameter $\K$-algebra, 
and $\sigma : R \to  R'$ is a $\K$-algebra homomorphism. Given an $R$-module $M$
we let
\[ R' \hatotimes{R} M := 
\lim_{\leftarrow i}\, (R'_i \otimes_R M) . \]
This is the $\m'$-adic completion of the $R'$-module
$R' \otimes_R M$. 

\begin{prop} \label{prop:15}
Let $A$ be an associative \tup{(}resp.\ Poisson\tup{)} 
$R$-deformation of $C$, let 
$R'$ be another parameter $\K$-algebra, let
$\sigma : R \to  R'$ be a $\K$-algebra homomorphism,
and let $A' := R' \hatotimes{R} A$. Then $A'$ has a unique structure
of associative \tup{(}resp.\ Poisson\tup{)} $R'$-deformation of $C$,
such that the canonical homomorphism $A \to A'$ 
is a homomorphism of $R$-algebras \tup{(}resp.\ Poisson
$R$-algebras\tup{)}.
\end{prop}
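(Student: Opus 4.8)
The plan is to use Proposition \ref{prop:12} to reduce everything to module-theoretic statements, and then transport the algebra structure along the base-change homomorphism $A \to A' = R' \hot_R A$. First I would check that $A'$ is a flat $\m'$-adically complete $R'$-module: since $A$ is flat and $\m$-adically complete over $R$, Proposition \ref{prop:12}(iii) gives an isomorphism of augmented $R$-modules $R \hot C \iso A$, hence $A' = R' \hot_R A \cong R' \hot_R (R \hot C) \cong R' \hot C$, which by the same proposition is flat and $\m'$-adically complete over $R'$. The augmentation $\K \ot_{R'} A' \to C$ is then the composite $\K \ot_{R'} A' \cong \K \ot_{R'} R' \hot_R A \cong \K \ot_R A \iso C$, and one checks it is a $\K$-algebra (resp.\ Poisson $\K$-algebra) isomorphism.

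Next I would produce the multiplication (resp.\ Poisson bracket) on $A'$. The original multiplication $\mu : A \ot_R A \to A$ is $R$-bilinear, so it induces $R' \ot_R (A \ot_R A) \to R' \ot_R A = R' \ot_R A$; passing to $\m'$-adic completions and using that completion is compatible with the relevant tensor products (so that $(R' \hot_R A) \hot_{R'} (R' \hot_R A)$ is the $\m'$-adic completion of $A \ot_R A$ base-changed to $R'$), one gets an $R'$-bilinear map $\mu' : A' \hot_{R'} A' \to A'$. Associativity, unitality (with $1_{A'}$ the image of $1_A$), the Jacobi identity, antisymmetry, and the Leibniz rule all hold because they hold for $\mu$ on the dense image of $A$ in $A'$ and all the maps involved are continuous for the $\m'$-adic topologies, which are complete and Hausdorff on the relevant completed modules. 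Thus $A'$ is an associative (resp.\ Poisson) $R'$-algebra, and by construction $A \to A'$ is a homomorphism of $R$-algebras (resp.\ Poisson $R$-algebras). Together with the flatness, completeness and augmentation verified above, $A'$ is an associative (resp.\ Poisson) $R'$-deformation of $C$.

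For uniqueness, suppose $A'$ is given some structure of associative (resp.\ Poisson) $R'$-deformation of $C$ for which $A \to A'$ is a homomorphism of $R$-algebras (resp.\ Poisson $R$-algebras). The image of $A$ is $\m'$-adically dense in $A'$ (indeed $A'$ is generated over $R'$ by this image, and the topology is separated), so any $R'$-bilinear continuous operation on $A'$ is determined by its restriction to $A \times A$; since both the given structure and the one constructed above restrict to the original operations on $A$, and both are continuous, they coincide. The only genuine subtlety — and the step I expect to need the most care — is the compatibility of $\m'$-adic completion with the tensor products appearing here, i.e.\ identifying $(R' \hot_R A) \hot_{R'} (R' \hot_R A)$ with $R' \hot_R (A \ot_R A)$ so that $\mu$ descends to a well-defined continuous $\mu'$; this uses the flatness of $A$ over $R$ together with the completion results of \cite{Ye4} already invoked in the proof of Proposition \ref{prop:12}. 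Once that identification is in hand, everything else is a routine density-and-continuity argument.
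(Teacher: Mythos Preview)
Your argument is correct, but the paper takes a simpler route that avoids the completed tensor product identification you flag as the delicate step. Instead of transporting $\mu$ through $(R' \hot_R A) \hot_{R'} (R' \hot_R A)$, the paper works by nilpotent approximation: for each $i$ set $A'_i := R'_i \otimes_R A$, which is an \emph{ordinary} base change along the artinian quotient $R \to R'_i$, so $A'_i$ is automatically a flat $R'_i$-algebra (resp.\ Poisson $R'_i$-algebra) and hence an $R'_i$-deformation of $C$; then $A' = \lim_{\leftarrow i} A'_i$ inherits the algebra structure as an inverse limit of algebras, and Proposition \ref{prop:12}(ii) (rather than (iii)) gives flatness and $\m'$-adic completeness. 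Your approach via $A' \cong R' \hot C$ and density is perfectly valid and makes the module-theoretic part very transparent, but it trades that for the tensor-completion compatibility check; the paper's inverse-limit approach makes the algebra structure immediate and pushes the work into Proposition \ref{prop:12} instead. Uniqueness is not addressed explicitly in the paper's proof, but your density argument for it is fine.
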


\begin{proof}
Let  $A'_i := R'_i \otimes_R A$. This is a flat $R'_i$-module,
and it has an induced $R'_i$-bilinear multiplication
(resp.\ Poisson bracket). Thus $A'_i$ is an $R'_i$-deformation of $C$.
In the limit, the $R'$-module
$A' = \lim_{\leftarrow i}\, A'_i$ 
has an induced $R'$-bilinear multiplication
(resp.\ Poisson bracket). 
By Proposition \ref{prop:12} the $R'$-module $A'$ is flat and $\m'$-adically
complete; so $A'$ is an $R'$-deformation of $C$.
\end{proof}

Let $C'$ be another commutative $\K$-algebra, and let 
$\tau : C \to C'$ be a homomorphism. We say that $C'$ is
a {\em principal localization} of $C$ if there is a $C$-algebra
isomorphism $C' \cong C_s = C[s^{-1}]$
for some element $s \in C$.

\begin{thm} \label{thm:12}
Let $\K$ be a field, $R$ a parameter $\K$-algebra, $C$ a commutative
$\K$-algebra, and $A$ a Poisson \tup{(}resp.\
associative\tup{)} $R$-deformation of $C$. Suppose $\tau : C \to C'$
is a principal localization. Then\tup{:}
\begin{enumerate}
\item There exists a 
Poisson \tup{(}resp.\ associative\tup{)} $R$-deformation 
$A'$ of $C'$, together with a homomorphism 
$g : A \to A'$ of 
Poisson \tup{(}resp.\ associative\tup{)} $R$-algebras
which lifts $\tau : C \to C'$.
\item Suppose $\tau' : C' \to C''$ is a homomorphism of commutative
$\K$-algebras, $A''$ is a Poisson \tup{(}resp.\ associative\tup{)}
$R$-deformation of $C''$, and
$h : A \to A''$ is a homomorphism of 
Poisson \tup{(}resp.\ associative\tup{)} $R$-algebras
which lifts $\tau' \circ \tau : C \to C''$.
Then there is a unique  homomorphism of 
Poisson \tup{(}resp.\ associative\tup{)} $R$-algebras
$g' : A' \to A''$ such that 
$h = g' \circ g$.
\end{enumerate}
\end{thm}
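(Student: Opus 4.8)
The plan is to construct $A'$ as a suitable localization of the $R$-algebra $A$ and then verify it is a flat, $\m$-adically complete deformation of $C'$ with the required universal property. Since $C' \cong C_s = C[s^{-1}]$, pick a lift $\tilde{s} \in A$ of the image of $s$ under a $\K$-linear splitting $C \to \K \ot_R A$, normalized so that $\tilde{s}$ is a unit modulo $\m$. In the Poisson (commutative) case, set $A' := A_{\tilde{s}} = A[\tilde{s}^{-1}]$, the ordinary commutative localization; the Poisson bracket extends uniquely to $A[\tilde{s}^{-1}]$ by the Leibniz rule (using $\{a, \tilde{s}^{-1}\} = -\tilde{s}^{-2}\{a,\tilde{s}\}$), and one must $\m$-adically complete. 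In the associative case, $\tilde{s}$ need not be central, but it becomes central modulo $\m$; the key observation is that the set $\{1, \tilde{s}, \tilde{s}^2, \dots\}$ is an \emph{Ore set} in $A$ — the Ore conditions hold modulo $\m$ (where $C$ is commutative) and lift through the $\m$-adic filtration by a successive-approximation argument, using that $\m^{i}A/\m^{i+1}A$ is a module over the commutative ring $C$. So $A' := \widehat{A_{\tilde{s}}}$, the $\m$-adic completion of the Ore localization, in both cases.

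Next I would check that $A'$ is an $R$-deformation of $C'$. First, $\K \ot_R A_{\tilde{s}} \cong (\K \ot_R A)_{\bar{s}} = C_s = C'$, since localization commutes with the base change $\K \ot_R (-)$ and with taking the quotient; $\m$-adic completion does not change this reduction. For flatness and completeness, I would show $A_{\tilde{s}}$ is again $\m$-adically flat: localization is exact, so $A_{\tilde{s}}$ is flat over $R$, and then invoke Proposition \ref{prop:12} — after completing, $\widehat{A_{\tilde{s}}}$ satisfies condition (i) because $R_i \ot_R A_{\tilde{s}} = (R_i \ot_R A)_{\bar{s}}$ is flat over $R_i$ for each $i$, so $\{R_i \ot_R A_{\tilde{s}}\}$ is an $\m$-adic system of $R_i$-flat modules and its limit is flat and complete. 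The canonical map $g : A \to A'$ is the composite $A \to A_{\tilde{s}} \to \widehat{A_{\tilde{s}}}$, a homomorphism of $R$-algebras (resp.\ Poisson $R$-algebras) lifting $\tau$; this gives part (1).

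For part (2), suppose $h : A \to A''$ lifts $\tau' \circ \tau$. Since $h(\tilde{s})$ reduces to the image of $s$ in $C''$, which is invertible there, and $A''$ is $\m$-adically complete, $h(\tilde{s})$ is invertible in $A''$ by the Complete Nakayama (\cite[Theorem 2.11]{Ye4}): a unit modulo $\m$ in a complete algebra is a unit. By the universal property of Ore localization, $h$ factors uniquely through $A_{\tilde{s}} \to A''$, and since $A''$ is complete this factors uniquely through $\widehat{A_{\tilde{s}}} = A'$, giving the unique $g' : A' \to A''$ with $h = g' \circ g$. Uniqueness is immediate because the image of $A$ generates $A'$ topologically. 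The main obstacle I anticipate is the associative case: verifying that powers of $\tilde{s}$ form an Ore set in the noncommutative, non-local algebra $A$, and that the resulting localization is well-behaved with respect to the $\m$-adic topology (e.g.\ that $A_{\tilde{s}}$ remains $\m$-adically separated before completion, so that no information is lost). This is precisely where the detailed study of Ore localizations attached to associative deformations, alluded to after Theorem \ref{thm:221}, will be needed.
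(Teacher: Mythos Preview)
Your overall plan is sound and closely parallels the paper's, but there is a genuine gap exactly where you anticipated one: the Ore condition for $\{\tilde{s}^j\}$ can \emph{fail} in the complete ring $A$. Your successive-approximation argument would need the exponent $j$ to be bounded uniformly over all artinian quotients $A_i$, and this is not the case in general.

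Here is a concrete counterexample. Take $R = \K[[\hbar]]$, $C = \K[x,y]$, and let $A$ be the $\m$-adically complete $R$-algebra generated by $x,y$ with the relation $xy - yx = \hbar y$. By PBW the monomials $\{y^i x^j\}$ form a topological $R$-basis, so $A$ is flat and $A/\hbar A \cong C$; in this basis right multiplication by $x$ sends $y^i x^j \mapsto y^i x^{j+1}$, so the right ideal $A x$ consists precisely of those elements whose $\hbar^n$-coefficient (for each $n$) has no pure $y^i$ term. A short induction gives $x^j y = y(x+\hbar)^j$, whose expansion contains the term $\hbar^j y$; hence $x^j y \notin Ax$ for every $j \geq 0$, and the left Ore condition at $\tilde{s} = x$ fails in $A$. (In the artinian quotient $A_i = A/\hbar^{i+1}A$ the Ore condition does hold, but the required exponent is $j = i+1$, growing without bound.) So there is no ring of fractions $A_{\tilde{s}}$ to complete.

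The paper's fix is simply to reverse your order of operations: first pass to the artinian quotients $A_i = R_i \ot_R A$, where the ideal $\m A_i$ is \emph{nilpotent} and a short lemma (Lemma~\ref{lem:1}) gives the Ore condition cleanly; form the localizations $(A_i)_{\tilde{s}}$ there; and then set $A' := \lim_{\leftarrow i}\,(A_i)_{\tilde{s}}$, invoking Proposition~\ref{prop:12} for flatness and completeness. Your remaining steps --- the extension of the Poisson bracket by the Leibniz rule, the flatness argument for the localization, and the proof of part~(2) via invertibility of $h(\tilde{s})$ in the complete ring $A''$ --- are all correct and essentially match the paper's treatment.
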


When we say that $g : A \to A'$ lifts $\tau : C \to C'$, we mean 
relative to the augmentations $A \to C$ and $A' \to C'$.
Observe that by part (2), the pair $(A' ,g)$ in part (1) is unique up to a
unique gauge transformation.

For the proof we need the next lemma on Ore localization of
noncommutative rings \cite{MR}. Recall that a subset $S$
of a ring $A$ is called a {\em denominator set} if it is
multiplicatively
closed, and satisfies the left and right torsion and Ore conditions. 
If $S$ is a denominator set, then $A$ can be localized with respect
to $S$. Namely there is a ring $A_S$, called the {\em ring of
fractions}, with a ring homomorphism $A \to A_S$. The elements of
$S$ become invertible in $A_S$, and $A_S$ is universal for this
property; every element $b \in A_S$ can be
written as $b = a_1 s_1^{-1} = s_2^{-1} a_2$, with 
$a_1, a_2 \in A$ and $s_1, s_2 \in S$;
and $A_S$ is flat over $A$ (on both sides).

\begin{lem} \label{lem:1}
Let $A$ be a ring, with nilpotent two-sided ideal $\mfrak{a}$.
Assume the ring
$\opn{gr}_{\mfrak{a}} (A) = \boplus_{i \geq 0} 
\mfrak{a}^i / \mfrak{a}^{i+1}$
is commutative. Let $s$ be some element of $A$.
\begin{enumerate}
\item The set $\{ s^j \}_{j \geq 0}$ is a denominator set in $A$.
We denote by $A_s$ the resulting ring of fractions. 
\item Let $\bar{A} := A / \mfrak{a} = \opn{gr}_{\mfrak{a}}^0 (A)$,
let $\bar{s}$ be the image of $s$ in $\bar{A}$, and let
$\mfrak{a}_s$ be the kernel of the canonical ring surjection
$A_s \to \bar{A}_{\bar{s}}$. 
Then $\mfrak{a}_s = \mfrak{a} A_s = A_s \mfrak{a}$,
and this is a nilpotent ideal.
\item Let $a$ be any element of $A$, with image 
$\bar{a} \in \bar{A}$. Then $a$ is invertible in 
$A_s$ if and only if $\bar{a}$ is invertible in 
$\bar{A}_{\bar{s}}$. 
\end{enumerate}
\end{lem}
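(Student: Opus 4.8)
The plan is to prove the three parts in order, using the standard properties of Ore localization recalled just before the statement, together with the hypothesis that $\opn{gr}_{\a}(A)$ is commutative.

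\medskip

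\textbf{Part (1).} First I would show that $S := \{ s^j \}_{j \geq 0}$ is a denominator set. Multiplicative closure is clear (with $s^0 = 1$). For the Ore and torsion conditions, the point is that modulo $\a$ the ring $\bar A := A / \a$ is commutative, so the images of the relevant elements already commute there; one then lifts by induction on the powers of $\a$, which is legitimate since $\a$ is nilpotent. Concretely, given $a \in A$ and $s^j \in S$, I want $a' \in A$ and $s^k \in S$ with $s^k a = a' s^j$. Work by induction on the nilpotence degree $n$ with $\a^{n} = 0$: in $A / \a^{n-1}$ one has (by the inductive hypothesis, or the base case using commutativity of $\bar A$) a solution, and the error term lies in $\a^{n-1}$; because $\a^{n-1}$ is central-ish modulo $\a^{n} = 0$ and killed by multiplication issues, I can absorb it by raising the power of $s$ or adjusting $a'$. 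The torsion condition (if $s^j a = 0$ then $a s^k = 0$ for some $k$) is handled the same way: commutativity of $\bar A$ forces the obstruction into successively deeper powers of $\a$. Hence $S$ is a denominator set and $A_s := A_S$ exists with the universal properties listed.

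\medskip

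\textbf{Part (2).} I would first observe that the composite $A \to A_s \to \bar A \to \bar A_{\bar s}$ inverts $S$, hence factors through $A_s$, giving the canonical surjection $A_s \to \bar A_{\bar s}$; surjectivity is clear since every element of $\bar A_{\bar s}$ is $\bar a\, \bar s^{-j}$ and $a s^{-j} \in A_s$ maps to it. Call the kernel $\a_s$. The inclusions $\a A_s \subseteq \a_s$ and $A_s \a \subseteq \a_s$ are immediate since $\a$ dies in $\bar A$. For the reverse inclusion, take $b \in \a_s$ and write $b = a s^{-j}$ with $a \in A$; then the image of $a$ in $\bar A_{\bar s}$ is zero, so $\bar a \bar s^{-j} = 0$, meaning $\bar s^{k} \bar a = 0$ in $\bar A$ for some $k$, i.e.\ $s^k a \in \a$ in $A$. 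Then $b = a s^{-j} = s^{-k}(s^k a) s^{-j} \in s^{-k} \a\, s^{-j} = \a A_s$ using the Ore relations to move $\a$ past the powers of $s$ (again legitimate since the commutators land in $\a^{\geq 2} A_s$ and we can iterate). This gives $\a_s = \a A_s = A_s \a$. Nilpotence: since $\a^n = 0$ in $A$, we get $(\a A_s)^n = \a^n A_s = 0$ by flatness of $A_s$ over $A$ (or directly by pushing all factors of $\a$ to one side), so $\a_s^{\, n} = 0$.

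\medskip

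\textbf{Part (3).} The forward direction is trivial: if $a$ is invertible in $A_s$, then its image $\bar a$ is invertible in the quotient $\bar A_{\bar s}$. For the converse, suppose $\bar a$ is invertible in $\bar A_{\bar s}$. Lift a two-sided inverse to some $c \in A_s$, so that $1 - ac$ and $1 - ca$ both lie in $\a_s$, which by part (2) is nilpotent. Hence $ac = 1 - x$ with $x$ nilpotent, so $ac$ is invertible in $A_s$ with inverse $\sum_{k \geq 0} x^k$ (a finite sum); similarly $ca$ is invertible. An element of a ring with both a left-invertible and a right-invertible multiple on the appropriate sides is invertible, so $a$ is invertible in $A_s$.

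\medskip

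The main obstacle I expect is Part (1): verifying the Ore and torsion conditions cleanly. The conceptual content is easy --- everything commutes mod $\a$ and $\a$ is nilpotent --- but organizing the induction on the nilpotence degree so that the error terms are genuinely absorbed (rather than merely pushed around) requires care, in particular keeping track of whether one adjusts the numerator or the power of the denominator at each stage. Once $S$ is known to be a denominator set, parts (2) and (3) are routine diagram-chasing with the universal property and the nilpotence of $\a_s$.
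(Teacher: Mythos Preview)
Your proposal is correct, and for parts (2) and (3) it is essentially the paper's argument: the paper invokes flatness of $A \to A_s$ to get $\a_s = \a A_s = A_s \a$ and then shows $(\a_s)^i = \a^i A_s$ by induction, while you do the same thing but with explicit Ore manipulations in place of the flatness step. Your commutator-pushing argument (moving $s^{-k}$ past $\a$ using $[\a^i, A] \subset \a^{i+1}$ and iterating until you hit $\a^n = 0$) is a valid substitute for flatness here.

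The genuine difference is in part (1). The paper does \emph{not} carry out the induction you sketch; it views $A$ as a $\Z[s]$-bimodule and cites \cite[Lemma 5.9 and Theorem 5.11]{YZ} to conclude that $A$ is ``evenly localizable'' to $\Z[s,s^{-1}]$, which immediately gives that $\{s^j\}$ is a denominator set and moreover identifies $A_s \cong A \otimes_{\Z[s]} \Z[s,s^{-1}]$. Your direct inductive approach is more elementary and self-contained. To make it precise, the key observation you are gesturing at with ``central-ish'' is exact centrality: since $\opn{gr}_{\a}(A)$ is commutative, $[\a^0, \a^{n-1}] \subset \a^n = 0$, so $\a^{n-1}$ is literally central in $A$. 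Then if $s^k a - a' s^j = \epsilon \in \a^{n-1}$ from the inductive hypothesis, multiplying on the left by $s^j$ gives $s^{k+j} a = s^j a' s^j + s^j \epsilon = (s^j a' + \epsilon) s^j$, using $s^j \epsilon = \epsilon s^j$. The torsion condition goes the same way: if $s^j a = 0$ and inductively $a s^k = \epsilon \in \a^{n-1}$, then $\epsilon s^j = s^j \epsilon = s^j a s^k = 0$, so $a s^{k+j} = 0$. This closes the step you flagged as the main obstacle.
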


\begin{proof}
(1) This is a variant of \cite[Corollary 5.18]{YZ}.
We view $A$ as a bimodule over the ring $\mbb{Z}[s]$. Since the 
$\mfrak{a}$-adic filtration is finite, and 
$\opn{gr}_{\mfrak{a}} (A)$ is commutative, it follows from 
\cite[Lemma 5.9]{YZ} that $A$ is evenly localizable to $\mbb{Z}[s, s^{-1}]$.
According to \cite[Theorem 5.11]{YZ} the set 
$\{ s^j \}_{j \geq 0}$ is a denominator set in $A$. Moreover, 
$A_s \cong A \otimes_{\Z[s]} \Z[s, s^{-1}]$
as left $A$-modules.

\medskip \noindent
(2) Since $A \to A_s$ is flat it follows that 
$\mfrak{a}_s = \mfrak{a} A_s = A_s \mfrak{a}$.
By induction on $i$ one then shows that 
$(\mfrak{a}_s)^i = \mfrak{a}^i A_s$; and hence $\mfrak{a}_s$ is
nilpotent.

\smallskip \noindent
(3) We prove only the nontrivial part. Suppose $\bar{a}$ is invertible
in
$\bar{A}_{\bar{s}}$. So 
$\bar{a} \bar{b} = 1$ for some $b \in A_s$. 
Thus
$a b = 1 - \epsilon$ in $A_s$, where $\epsilon \in \mfrak{a}_s$.
Since the ideal $\mfrak{a}_s$ is nilpotent, the element 
$1 - \epsilon$ is invertible in $A_s$.
This proves that $a$ has a right inverse. Similarly for a left
inverse.
\end{proof}

\begin{proof}[Proof of Theorem \tup{\ref{thm:12}}]
The proof is in several steps.

\smallskip \noindent
Step 1. Consider the associative case, and assume $R$ is artinian
(i.e.\ $\m$ is nilpotent). Take an element
$s \in C$ such that $C' \cong C_s$. 
Choose some lifting $\til{s} \in A$ of $s$.
According to Lemma \ref{lem:1} there is a ring of fractions
$A_{\til{s}}$ of $A$, gotten by inverting $\til{s}$ on one side, and 
$\K \otimes_R A_{\til{s}} \cong  C'$.
Since $R$ is central in $A$, it is also central in $A_{\til{s}}$. 
And since $A_{\til{s}}$ is flat over $A$, it is also flat over $R$. We
see that $A_{\til{s}}$ is an associative $R$-deformation of
$C'$, and the homomorphism $g : A \to A_{\til{s}}$ lifts
$C \to C_s$. 

Now suppose we are in the situation of part (2). 
Since 
$(\tau' \circ \tau)(s)$ is invertible in $C''$, Lemma
\ref{lem:1}(3) says that the element
$h(\til{s})$ is invertible in $A''$.
Therefore there is a unique $A$-ring homomorphism
$g' : A_{\til{s}} \to A''$
such that $h = g' \circ g$. 

\smallskip \noindent
Step 2. $R$ is still artinian, but now we are in the Poisson case. 
So $A$ is a Poisson $R$-deformation of $C$. From the previous step we
obtain a flat commutative $R$-algebra $A'$, such that 
$\K \otimes_R A' \cong C'$, together with a homomorphism
$g : A \to A'$. The pair $(A', g)$ is unique for this property. We
have to address the Poisson bracket. 

Take an element $\til{s} \in A$ like in Step 1; so
$A' \cong A_{\til{s}}$. There is a unique
biderivation on the commutative ring $A'$ that
extends the given Poisson bracket 
$\{ -,- \}$ on $A$; it has the usual explicit formula for the
derivative of a fraction. And it is straightforward to check that this
biderivation is anti-symmetric and satisfies the Jacobi identity. 
Hence $A'$ becomes a Poisson $R$-deformation
of $C'$, uniquely.

In the situation of part (2), we know (from step 1) that there is a
unique $A$-algebra homomorphism $g' : A' \to A''$
such that $h = g' \circ g$. The formula for the Poisson bracket on
$A'$ shows that $g'$ is a homomorphism of Poisson algebras. 

\medskip \noindent
Step 3. Finally we allow $R$ to be noetherian, and look at both cases
together. 
Then $R \cong \lim_{\leftarrow i}\, R_i$, and, letting
$A_i := R_i \otimes_R A$, we have $A \cong \lim_{\leftarrow i}\, A_i$.
By the previous steps for every $i$ there is an $R_i$-deformation 
$A'_i$ of $C'$. Due to uniqueness these form an inverse system, and we
take $A' := \lim_{\leftarrow i}\, A'_i$. 
By Proposition \ref{prop:12} this is an $R$-deformation of
$C'$.

Part (2) is proved similarly by nilpotent approximations. 
\end{proof}

%\cleardoublepage
\section{Sheaves of Complete Modules}
\label{sec:complete-sh}

In this section we present a few results on sheaves of $\m$-adically complete
$R$-modules on a topological space $X$. 

Suppose $\bsym{U} = \{ U_k \}_{k \in K}$ is a collection of
open sets in $X$. For $k_0, \ldots, k_m \in K$ we write
$U_{k_0, \ldots, k_m} := U_{k_0} \cap \cdots \cap U_{k_m}$.

\begin{dfn} \label{dfn:23}
Let $\mcal{N}$ be a sheaf of abelian groups on the topological space
$X$. 
\begin{enumerate}
\item An open set $U \subset X$ will be called {\em
$\mcal{N}$-acyclic} if the derived functor sheaf cohomology satisfies
$\mrm{H}^i (U, \mcal{N}) = 0$ for all $i > 0$.
\item Now suppose $\bsym{U} = \{ U_k \}_{k \in K}$ is a collection of
open sets in $X$. We say that the collection $\bsym{U}$ is
{\em $\mcal{N}$-acyclic} if all the finite intersections
$U_{k_0, \ldots, k_m}$ are $\mcal{N}$-acyclic.
\item We say that {\em there are enough $\mcal{N}$-acyclic open
sets} if for any open set $U \subset X$, and any open covering
$\bsym{U}$ of $U$, there exists an $\mcal{N}$-acyclic open covering
$\bsym{U}'$ of $U$ which refines $\bsym{U}$.
\end{enumerate}
\end{dfn}

\begin{exa} \label{exa:20}
Here are a few typical examples of a topological space $X$, and a
sheaf $\mcal{N}$, such that there are enough $\mcal{N}$-acyclic 
open sets.
\begin{enumerate}
\item $X$ is an algebraic variety over a field $\K$
(i.e.\ an integral finite type separated  $\K$-scheme), with structure
sheaf $\mcal{O}_X$, and $\mcal{N}$ is a coherent $\mcal{O}_X$-module.
Then any collection of affine open sets is $\mcal{N}$-acyclic.
\item $X$ is a complex analytic manifold, with structure sheaf 
$\mcal{O}_X$, and $\mcal{N}$ is a coherent $\mcal{O}_X$-module. Then
any collection of Stein open sets is $\mcal{N}$-acyclic.
\item $X$ is a differentiable manifold, with structure
sheaf $\mcal{O}_X$, and $\mcal{N}$ is any $\mcal{O}_X$-module. Then
any open set is $\mcal{N}$-acyclic.
\item $X$ is a differentiable manifold, and $\mcal{N}$ is a locally
constant sheaf of abelian groups. Then any sufficiently small
simply connected open set $U$ is $\mcal{N}$-acyclic. 
\end{enumerate}
\end{exa}

\begin{rem}
For the purposes of this section it suffices to require only the
vanishing of $\mrm{H}^1 (U, \mcal{N})$. But considering the examples
above, 
we see that the stronger requirement of acyclicity is not too
restrictive. Cf.\ also \cite{KS}. 
\end{rem}

Let $\K$ be a field and $(R, \m)$ a parameter $\K$-algebra.
Recall that for $i \geq 0$ we write $R_i := R / \m^{i+1}$. 

Consider a sheaf $\mcal{M}$ of $R$-modules on a topological space $X$. 
Given a ring homomorphism $R \to R'$, the sheaf 
$R' \otimes_R \mcal{M}$ is the sheaf associated to the presheaf
$U \mapsto R' \otimes_R \Gamma(U, \mcal{M})$, 
for open sets $U \subset X$. 
If $\{ \mcal{M}_i \}_{i \in \N}$ is an inverse system of sheaves on
$X$, then 
$\opn{lim}_{\leftarrow i}\, \mcal{M}_i$ is the sheaf
$U \mapsto \opn{lim}_{\leftarrow i}\, \Gamma(U, \mcal{M}_i)$.

By combining the operations above one
defines the {\em $\m$-adic completion} of the sheaf of $R$-modules $\mcal{M}$
to be 
$\what{\mcal{M}} :=
\lim_{\leftarrow i}\, (R_i \otimes_R \mcal{M})$.
The sheaf $\mcal{M}$ is called {\em $\m$-adically complete} if the
canonical sheaf homomorphism 
$\mcal{M} \to \what{\mcal{M}}$ is an isomorphism.

We define $\m^i \mcal{M}$ to be the sheaf associated to the presheaf
$U \mapsto \m^i \Gamma(U, \mcal{M})$
for open sets $U \subset X$; it is a subsheaf of $\mcal{M}$.
Next we define
$\opn{gr}^i_{\m} (\mcal{M}) := \m^i \mcal{M} / \m^{i+1} \mcal{M}$
and
$\opn{gr}_{\m} (\mcal{M}) := \boplus_{i \geq 0}
\opn{gr}^i_{\m} (\mcal{M})$. The latter is a sheaf of 
$\opn{gr}_{\m} (R)$ -modules.

The sheaf $\mcal{M}$ is called {\em flat} if for every point $x \in X$ the stalk
$\mcal{M}_x$ is a flat $R$-module. 
If $\mcal{M}$ is flat over $R$, then the canonical sheaf homomorphism
$\opn{gr}_{\m} (R) \ot \opn{gr}^0_{\m} (R)  \to \opn{gr}_{\m} (\mcal{M})$
is an isomorphism (cf.\ \cite[Theorem III.5.1]{Bo1}). Note that 
$\opn{gr}^0_{\m} (R) = \K \otimes_R \mcal{M}$.

The reason we need acyclic open sets is this:
 
\begin{thm}[{\cite[Theorem 5.6]{Ye4}}]  \label{thm:230}
Let $\K$ be a field, $(R, \m)$ a parameter $\K$-algebra, $X$ a
topological space, $U \subset X$ an open set, and $\mcal{M}$ a sheaf $R$-modules
on $X$. Define $\mcal{M}_i := R_i \otimes_R \mcal{M}$.
We assume that $\mcal{M}$ is flat over $R$ and $\m$-adically complete,
and that $U$ is an $\mcal{M}_0$-acyclic open set.
Then the $R$-module $\Gamma(U, \mcal{M})$
is flat and $\m$-adically complete, and for every $i$ the canonical
homomorphism
\[ R_i \otimes_R \Gamma(U, \mcal{M}) \to \Gamma(U, \mcal{M}_i) \]
is bijective.
\end{thm}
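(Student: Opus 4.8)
The plan is to reduce the statement to Proposition \ref{prop:12} by exhibiting $M := \Gamma(U, \mcal{M})$ as the inverse limit of a suitable $\m$-adic system of flat modules. Since $\mcal{M}$ is $\m$-adically complete, $\mcal{M} \cong \lim_{\leftarrow i} \mcal{M}_i$ as sheaves; and since $\Gamma(U, -)$ commutes with inverse limits, this gives $M \cong \lim_{\leftarrow i} N_i$, where $N_i := \Gamma(U, \mcal{M}_i)$ with the evident transition maps. So it will be enough to show that $\{ N_i \}_{i \in \N}$ is an $\m$-adic system of $R$-modules (Definition \ref{dfn:31}) in which every $N_i$ is flat over $R_i$: then Proposition \ref{prop:12}, implication (ii) $\Rightarrow$ (i), shows that $M$ is flat and $\m$-adically complete, and the concluding assertion of that proposition (which rests on \cite[Theorem 4.3]{Ye4}) shows that $R_i \ot_R M \to N_i = \Gamma(U, \mcal{M}_i)$ is bijective.

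First I would assemble the dévissage. For $j \geq 1$ there is a short exact sequence of sheaves
\[ 0 \to \m^j \mcal{M} / \m^{j+1} \mcal{M} \to \mcal{M}_j \to \mcal{M}_{j-1} \to 0 , \]
and flatness of $\mcal{M}$ over $R$ identifies $\m^j \mcal{M} / \m^{j+1} \mcal{M}$ with $(\m^j / \m^{j+1}) \ot_\K \mcal{M}_0$; as $R$ is noetherian, $\m^j / \m^{j+1}$ is finite-dimensional over $\K$, so each such graded piece is a finite direct sum of copies of $\mcal{M}_0$. Since $U$ is $\mcal{M}_0$-acyclic --- only $\mrm{H}^1(U, \mcal{M}_0) = 0$ is used --- induction on $b - a$ along the $\m$-adic filtration then gives $\mrm{H}^1(U, \m^a \mcal{M} / \m^b \mcal{M}) = 0$ for all $0 \leq a < b$. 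In particular each of the sequences displayed above stays exact after applying $\Gamma(U, -)$, so $N_j \to N_{j-1}$ is surjective with kernel $\Gamma(U, \m^j \mcal{M} / \m^{j+1} \mcal{M})$; and condition (i) of Definition \ref{dfn:31}, namely $\m^{i+1} N_i = 0$, is immediate because $\m^{i+1} \mcal{M}_i = 0$.

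The crux is to show that this kernel equals $\m^j N_j$, i.e.\ that $\m^j$ commutes with $\Gamma(U, -)$ on $\mcal{M}_j$. The inclusion $\m^j N_j \subseteq \Gamma(U, \m^j \mcal{M}_j)$ is clear. For the reverse, pick $a_1, \dots, a_r \in \m^j$ whose classes form a $\K$-basis of $\m^j / \m^{j+1}$; by Nakayama they generate the ideal $\m^j$. The sheaf map $\phi : \mcal{M}_j^{\oplus r} \to \m^j \mcal{M}_j = \m^j \mcal{M} / \m^{j+1} \mcal{M}$, $(m_k) \mapsto \sum_k a_k m_k$, is then surjective, and composing it with the flatness isomorphism $\m^j \mcal{M} / \m^{j+1} \mcal{M} \cong \mcal{M}_0^{\oplus r}$ exhibits $\phi$ as the $r$-fold direct sum of the projection $\mcal{M}_j \to \mcal{M}_0$; hence $\ker \phi \cong (\m \mcal{M}_j)^{\oplus r}$. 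But $\m \mcal{M}_j = \m \mcal{M} / \m^{j+1} \mcal{M}$ has a finite filtration with subquotients $\m^a \mcal{M} / \m^{a+1} \mcal{M}$ ($1 \leq a \leq j$), so $\mrm{H}^1(U, \ker \phi) = 0$ by the previous paragraph, and therefore $\Gamma(U, \phi)$ is surjective. It follows that $\Gamma(U, \m^j \mcal{M}_j) = \sum_k a_k N_j = \m^j N_j$. This yields condition (ii) of Definition \ref{dfn:31}, $R_i \ot_{R_{i+1}} N_{i+1} \iso N_i$. Moreover the same computation gives $\opn{gr}_{\m}(N_j) \cong \opn{gr}_{\m}(R_j) \ot_\K N_0$, so a $\K$-linear splitting of $N_j \surj N_0$ induces an $R_j$-linear map $R_j \ot_\K N_0 \to N_j$ which is an isomorphism on associated graded; since both sides carry finite $\m$-adic filtrations, it is an isomorphism, and in particular $N_j$ is flat (indeed free) over $R_j$.

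With $\{ N_i \}$ now an $\m$-adic system of flat modules and $M \cong \lim_{\leftarrow i} N_i$, Proposition \ref{prop:12} completes the argument. The step I expect to be the main obstacle is the one in the third paragraph --- commuting the ideal power $\m^j$ past the global-sections functor on the sheaf $\mcal{M}_j$; everything else is either formal (the reduction to Proposition \ref{prop:12} and the inverse-limit identifications) or routine dévissage built from the flatness of $\mcal{M}$ and the single vanishing $\mrm{H}^1(U, \mcal{M}_0) = 0$.
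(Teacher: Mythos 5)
The paper contains no internal proof of Theorem \ref{thm:230} to compare with: it is quoted directly from \cite[Theorem 5.6]{Ye4}. Taken on its own terms, your argument is sound and structurally the natural one: completeness of $\mcal{M}$ plus the section-wise definition of inverse limits of sheaves gives $\Gamma(U,\mcal{M}) \cong \lim_{\leftarrow i} N_i$ with $N_i = \Gamma(U,\mcal{M}_i)$; flatness of $\mcal{M}$ identifies $\opn{gr}^j_{\m}(\mcal{M})$ with a finite direct sum of copies of $\mcal{M}_0$, so d\'evissage and $\mrm{H}^1(U,\mcal{M}_0)=0$ keep the relevant sequences exact after $\Gamma(U,-)$; your map $\phi$ correctly yields $\Gamma(U,\m^{i+1}\mcal{M}_{i+1}) = \m^{i+1} N_{i+1}$, which is exactly condition (ii) of Definition \ref{dfn:31}; and Proposition \ref{prop:12}, including its final assertion, then delivers both conclusions. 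This is the same circle of ideas (adic d\'evissage against acyclicity, with the module-theoretic input of \cite{Ye4}) that the cited reference relies on, so I would describe it as a self-contained write-up rather than a genuinely different route.

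The one place you go too fast is the flatness of $N_j$ over $R_j$. Your displayed computation identifies only the top graded piece $\m^j N_j = \Gamma(U, \m^j \mcal{M}_j)$; the assertion $\opn{gr}_{\m}(N_j) \cong \opn{gr}_{\m}(R_j) \ot_{\K} N_0$ needs $\m^a N_j = \Gamma(U, \m^a \mcal{M}_j)$ (and the vanishing $\mrm{H}^1(U,\m^{a+1}\mcal{M}_j)=0$) for every $a \leq j$, and for $a < j$ the kernel of $\phi : \mcal{M}_j^{\oplus r} \to \m^a \mcal{M}_j$ is no longer $(\m \mcal{M}_j)^{\oplus r}$, so ``the same computation'' does not literally apply. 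It is repairable with the tools you already have: by flatness of $\mcal{M}$, $\opn{ker}(\phi) \cong K \ot_R \mcal{M}$ with $K := \opn{ker}\bigl( R_j^{\oplus r} \to \m^a / \m^{j+1} \bigr)$ a finitely generated $R_j$-module, which therefore carries a finite filtration whose subquotients are finite direct sums of copies of $\mcal{M}_0$; hence $\mrm{H}^1(U, \opn{ker}\phi) = 0$ and the surjectivity of $\Gamma(U,\phi)$ follows as before. Alternatively, induct on $j$ using the exact sequences $0 \to \m^j \mcal{M}_j \to \m^a \mcal{M}_j \to \m^a \mcal{M}_{j-1} \to 0$ together with the top-degree case you did prove. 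With that step filled in, the proof is complete.
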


\begin{cor} \label{cor:240}
Let $(R, \m)$ be a parameter $\K$-algebra, $X$ a topological space, and
$\mcal{N}_0$ a sheaf of $\K$-modules on $X$. Assume that $X$ has enough
$\mcal{N}_0$-acyclic open coverings. 
\begin{enumerate}
\item The sheaf of $R$-modules $\mcal{N} := R \hot \mcal{N}_0$
is flat  and $\m$-adically complete.

\item Let $U$ be an $\mcal{N}_0$-acyclic open set of $X$. Then the canonical
homomorphism 
$R \hot \Gamma(U, \mcal{N}_0) \to \Gamma(U, \mcal{N})$
is bijective.
\end{enumerate}
\end{cor}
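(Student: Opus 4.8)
\textbf{Proof proposal for Corollary \ref{cor:240}.}

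The plan is to deduce both parts directly from Theorem \ref{thm:230}, with the only real work being to produce, for the sheaf $\mcal{N} := R \hot \mcal{N}_0$, the hypotheses that Theorem \ref{thm:230} requires: that $\mcal{N}$ is flat over $R$ and $\m$-adically complete, and that $\mcal{N}_i := R_i \ot_R \mcal{N}$ is identified with the ``obvious'' sheaf $R_i \ot_R \mcal{N}_0$ whose global sections on an $\mcal{N}_0$-acyclic open set $U$ are $R_i \ot_K \Gamma(U, \mcal{N}_0)$.

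First I would verify flatness and $\m$-adic completeness of $\mcal{N}$ at the level of stalks. By definition $\mcal{N} = \what{R \ot \mcal{N}_0} = \lim_{\leftarrow i}\, (R_i \ot_R (R \ot \mcal{N}_0)) = \lim_{\leftarrow i}\, (R_i \ot \mcal{N}_0)$ as a sheaf. Fix a point $x \in X$. Taking stalks commutes with the tensor product $R_i \ot (-)$ (it is a finite operation over $\K$ in each degree, or simply: $R_i$ is a fixed $\K$-module and stalks commute with $\ot_\K$), so $(R_i \ot \mcal{N}_0)_x = R_i \ot (\mcal{N}_0)_x$. The inverse limit in the definition of $\mcal{N}$ need not commute with stalks in general, but here the transition maps $R_{i+1}\ot\mcal{N}_0 \to R_i \ot \mcal{N}_0$ are surjective with kernel $\m^{i+1}/\m^{i+2}\ot\mcal{N}_0$, and the system $\{R_i \ot \mcal{N}_0\}$ on $X$ is an $\m$-adic system of sheaves in the sense analogous to Definition \ref{dfn:31}; one invokes the relevant result from \cite{Ye4} (the flatness/completeness criterion underlying Proposition \ref{prop:12}, applied stalkwise, noting $\{(R_i\ot\mcal{N}_0)_x\} = \{R_i \ot (\mcal{N}_0)_x\}$ is an $\m$-adic system of flat $R_i$-modules) to conclude that $\mcal{N}_x \cong R \hot (\mcal{N}_0)_x$, which is flat and $\m$-adically complete over $R$ by Proposition \ref{prop:12}. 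This gives part (1). Along the way one also obtains that the canonical map $R_i \ot_R \mcal{N} \to R_i \ot \mcal{N}_0$ is an isomorphism of sheaves, i.e.\ $\mcal{N}_i = R_i \ot_R \mcal{N} \cong R_i \ot \mcal{N}_0$; this is exactly the ``last assertion'' type statement, pushed to the sheaf level.

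For part (2), let $U$ be an $\mcal{N}_0$-acyclic open set. Since $\mcal{N}$ is flat and $\m$-adically complete (part (1)) and $\mcal{N}_0 = \K \ot_R \mcal{N}$, Theorem \ref{thm:230} applies and tells us that $\Gamma(U,\mcal{N})$ is flat and $\m$-adically complete and that $R_i \ot_R \Gamma(U, \mcal{N}) \to \Gamma(U, \mcal{N}_i)$ is bijective for all $i$. Now I compute $\Gamma(U, \mcal{N}_i)$: using the identification $\mcal{N}_i \cong R_i \ot \mcal{N}_0$ from part (1), and the fact that $R_i$ is a finite-length $\K$-module so that the presheaf $V \mapsto R_i \ot_\K \Gamma(V, \mcal{N}_0)$ is already a sheaf (equivalently, $\mrm{H}^0$ commutes with the exact functor $R_i \ot_\K (-)$ since it is a finite direct sum in a filtration whose graded pieces are direct sums of copies of $\mcal{N}_0$, and $U$ is $\mcal{N}_0$-acyclic hence in particular $\mrm{H}^0$ is exact on the relevant short exact sequences of sheaves), we get $\Gamma(U, \mcal{N}_i) \cong R_i \ot_\K \Gamma(U, \mcal{N}_0)$. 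Therefore $R_i \ot_R \Gamma(U,\mcal{N}) \cong R_i \ot_\K \Gamma(U, \mcal{N}_0)$ compatibly in $i$, and passing to the inverse limit gives $\Gamma(U, \mcal{N}) \cong \lim_{\leftarrow i}\, (R_i \ot_\K \Gamma(U, \mcal{N}_0)) = R \hot \Gamma(U, \mcal{N}_0)$, as claimed. One should check the isomorphism is the canonical map, which it is by construction.

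The main obstacle is the stalkwise analysis in part (1): the inverse limit defining $\mcal{N}$ does not a priori commute with taking stalks, so one cannot naively write $\mcal{N}_x = \what{R \ot \mcal{N}_0}_x$. The clean way around this is to recognize $\{R_i \ot \mcal{N}_0\}_{i}$ as an $\m$-adic system of sheaves with surjective transition maps, observe that such systems have stalks forming $\m$-adic systems of $R$-modules, and then apply the Mittag-Leffler-type vanishing together with the flatness/completeness results of \cite{Ye4} (the same machinery behind Proposition \ref{prop:12}) to get $\mcal{N}_x \cong R \hot (\mcal{N}_0)_x$. Everything downstream — identifying $\mcal{N}_i$, computing $\Gamma(U, \mcal{N}_i)$, and assembling the limit — is then routine, modulo the standard remark that $R_i \ot_\K (-)$ is exact and preserves the sheaf property because $R_i$ has finite length over $\K$.
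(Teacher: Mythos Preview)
Your argument for part (1) has a genuine gap: the claimed isomorphism $\mcal{N}_x \cong R \hot (\mcal{N}_0)_x$ is false in general, and the Mittag--Leffler reasoning you propose does not repair it. The stalk $\mcal{N}_x$ is the direct limit $\operatorname{colim}_{U \ni x} \lim_{\leftarrow i} \bigl( R_i \ot \Gamma(U, \mcal{N}_0) \bigr)$, whereas $R \hot (\mcal{N}_0)_x$ is $\lim_{\leftarrow i} \operatorname{colim}_{U \ni x} \bigl( R_i \ot \Gamma(U, \mcal{N}_0) \bigr)$; these two generally differ. Concretely, take $X = \mbb{A}^1_{\K}$, $\mcal{N}_0 = \mcal{O}_X$, $R = \K[[\hbar]]$, and $x$ a closed point: then $\mcal{N}_x$ consists of power series in $\hbar$ whose coefficients have a \emph{common} denominator not vanishing at $x$, while $R \hot (\mcal{N}_0)_x$ allows the denominators to grow with the power of $\hbar$. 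In particular $\mcal{N}_x$ is typically not $\m$-adically complete, so it cannot equal $R \hot (\mcal{N}_0)_x$. Mittag--Leffler controls $\lim^1$, not the interchange of a filtered colimit with an inverse limit, so it is irrelevant here. A further symptom: your argument for (1) never uses the hypothesis that $X$ has enough $\mcal{N}_0$-acyclic open coverings, and that hypothesis is essential.

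The paper avoids stalks entirely. For (1) it observes that $\{ R_i \ot \mcal{N}_0 \}_{i \in \N}$ is an $\m$-adic system of sheaves with each term flat over $R_i$, and then invokes \cite[Corollary 5.10]{Ye4} (stated as Proposition \ref{prop:16} in this paper), which takes exactly this input together with the acyclicity hypothesis and returns flatness, $\m$-adic completeness, and the identifications $R_i \ot_R \mcal{N} \cong R_i \ot \mcal{N}_0$ directly at the sheaf level. For (2) your computation via $\Gamma(U, \mcal{N}_i) \cong R_i \ot \Gamma(U, \mcal{N}_0)$ and passage to the limit is correct once (1) is in place; the paper takes a shorter path, using Theorem \ref{thm:230} to see that $\Gamma(U, \mcal{N})$ is flat and complete with $\K \ot_R \Gamma(U, \mcal{N}) \cong \Gamma(U, \mcal{N}_0)$, and then applying the implication (i) $\Rightarrow$ (iv) of Proposition \ref{prop:12} to any $\K$-linear splitting.
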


\begin{proof}
(1) Since $\mcal{N} = \lim_{\leftarrow i}\, (R_i \ot \mcal{N}_0)$, 
this follows from \cite[Corollary 5.10]{Ye4}.

\medskip \noindent
(2) By Theorem \ref{thm:230} we know that $\Gamma(U, \mcal{N})$
is flat and $\m$-adically complete, and 
$\K \ot_R \Gamma(U, \mcal{N}) \cong \Gamma(U, \mcal{N}_0)$.
Now use the implication (i) $\Rightarrow$ (iv) in Proposition \ref{prop:12}.
\end{proof}

\begin{cor} \label{cor:241}
In the situation of Theorem \tup{\ref{thm:230}}, let 
$M := \Gamma(U, \mcal{M})$. Then for any $j \geq 0$  the 
$R$-module $\m^j M$ is $\m$-adically complete, and 
the canonical homomorphism
$\m^j M \to \Gamma(U, \m^j \mcal{M})$ is bijective.
\end{cor}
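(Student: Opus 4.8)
The plan is to derive both assertions directly from Theorem~\ref{thm:230}, which in the present situation already tells us that $M := \Gamma(U,\mcal{M})$ is flat and $\m$-adically complete, and that for every $i$ the canonical map $R_i \ot_R M \to \Gamma(U,\mcal{M}_i)$ is bijective. The statement is vacuous for $j = 0$ (then $\m^0\mcal{M} = \mcal{M}$ and $\m^0 M = M$), so from now on I assume $j \geq 1$.

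For the bijectivity I would first identify, as subsheaves of $\mcal{M}$, the kernel of $\mcal{M} \to \mcal{M}_{j-1}$ with $\m^j\mcal{M}$. This is a statement about stalks: since the ideal $\m^j$ is finitely generated it commutes with the filtered colimits that compute stalks, so $(\m^j\mcal{M})_x = \m^j\mcal{M}_x$; on the other hand the kernel of $\mcal{M} \to \mcal{M}_{j-1}$ has stalk $\ker(\mcal{M}_x \to R_{j-1}\ot_R\mcal{M}_x) = \m^j\mcal{M}_x$ as well. Hence there is a short exact sequence of sheaves
\[ 0 \to \m^j\mcal{M} \to \mcal{M} \to \mcal{M}_{j-1} \to 0 . \]
Applying $\Gamma(U,-)$ gives a left exact sequence $0 \to \Gamma(U,\m^j\mcal{M}) \to M \to \Gamma(U,\mcal{M}_{j-1})$. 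Since $\mcal{M}_{j-1}$ is a sheaf of $R_{j-1}$-modules, the map $M \to \Gamma(U,\mcal{M}_{j-1})$ annihilates $\m^j M$, and the induced map $M/\m^j M = R_{j-1}\ot_R M \to \Gamma(U,\mcal{M}_{j-1})$ is the isomorphism of Theorem~\ref{thm:230}; in particular $M \to \Gamma(U,\mcal{M}_{j-1})$ is surjective. Therefore $\Gamma(U,\m^j\mcal{M})$ is identified with the kernel $\m^j M$ of $M \surj M/\m^j M$, and unwinding the construction of the canonical map $\m^j M \to \Gamma(U,\m^j\mcal{M})$ shows it is precisely this identification.

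It remains to see that $\m^j M$ is $\m$-adically complete. For $k \geq j$ the short exact sequences $0 \to \m^j M/\m^k M \to M/\m^k M \to M/\m^j M \to 0$ form an inverse system over $k$ whose transition maps are surjective, so all $\lim^{1}$ terms vanish, and passing to the inverse limit over $k$ yields $0 \to \lim_{\leftarrow k}(\m^j M/\m^k M) \to M \to M/\m^j M \to 0$, using that $\lim_{\leftarrow k}(M/\m^k M) = M$ because $M$ is complete. The substitution $k \mapsto k+j$ identifies $\lim_{\leftarrow k}(\m^j M/\m^k M)$ with the $\m$-adic completion of $\m^j M$, which is thus equal to $\m^j M$. (Alternatively one can argue topologically: $\m^j M$ is an open, hence closed, submodule of the complete separated module $M$, and its subspace topology agrees with its own $\m$-adic topology, so it is complete.)

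The only genuine input here is Theorem~\ref{thm:230}; everything else is formal manipulation. The step I expect to require the most care is the identification $\m^j\mcal{M} = \ker(\mcal{M} \to \mcal{M}_{j-1})$ together with the verification that the canonical homomorphism $\m^j M \to \Gamma(U,\m^j\mcal{M})$ is exactly the map occurring in the displayed exact sequence — and, relatedly, checking that $M \to \Gamma(U,\mcal{M}_{j-1})$ really is the quotient map followed by the Theorem~\ref{thm:230} isomorphism. None of this is deep, but it is the place where an inattentive argument could go astray.
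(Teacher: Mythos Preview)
Your argument is correct. The bijectivity half is exactly the paper's approach: the commutative diagram \tup{(\ref{eqn:36})} in the paper encodes precisely your comparison of the two short/left exact sequences, with the middle vertical map the identity and the right vertical map $\be$ bijective by Theorem~\ref{thm:230}. Your additional care in verifying that $\m^j\mcal{M} = \ker(\mcal{M} \to \mcal{M}_{j-1})$ and that the canonical map is the one appearing in the diagram is well placed; the paper leaves this implicit.

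For completeness of $\m^j M$ the two arguments genuinely diverge. The paper invokes the structure result Proposition~\ref{prop:12}: since $M$ is flat and complete there is an isomorphism $M \cong R \hot N_0$ (with $N_0 = \Gamma(U,\mcal{M}_0)$), under which $\m^j M$ corresponds to $\m^j \hot N_0$, an object that is visibly $\m$-adically complete. Your route is instead a direct inverse-limit computation using $\m^i(\m^j M) = \m^{i+j}M$, surjectivity of transition maps, and completeness of $M$. Your argument is more elementary in that it avoids the $\m$-adic freeness machinery of \cite{Ye4} hidden inside Proposition~\ref{prop:12}; the paper's argument has the advantage of making the structure of $\m^j M$ explicit (it is $\m^j \hot N_0$), which is sometimes useful downstream.
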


Note that $\m^j \mcal{M}$ is usually not flat over $R$ for $j \geq 1$
(because $\m^j$ is usually not a flat $R$-module).

\begin{proof}
Let $N_0 := \Gamma(U, \mcal{M}_0)$. 
By Theorem \ref{thm:230} we know that $M$ is a flat $\m$-adically complete
$R$-module, and $\K \ot_R M \cong N_0$. Hence by Proposition \ref{prop:12} 
there is an isomorphism of $R$-modules $M \cong R \hot N_0$. Under this
isomorphism the $R$-module $\m^j M$ goes to $\m^j \hot N_0$, which 
is $\m$-adically complete. 

For any $i \in \N$ define $M_i := R_i \ot_R M$.
Consider the commutative diagram
\begin{equation} \label{eqn:36}
\UseTips \xymatrix @C=5ex @R=5ex {
0 
\ar[r]
&
\m^{j} M
\ar[r]
\ar[d]^{\alpha}
&
M
\ar[r]
\ar[d]^{=}
&
M_{j-1}
\ar[r]
\ar[d]^{\be}
& 
0
\\
0 
\ar[r]
&
\Gamma(U, \m^{j}  \mcal{M})
\ar[r]
&
\Gamma(U, \mcal{M})
\ar[r]
&
\Gamma(U, \mcal{M}_{j-1})
} \quad .
\end{equation}
The top row is trivially exact, and the bottom row is exact since $\Gamma(U, -)$
is left exact. The arrow $\be$ is bijective by Theorem \tup{\ref{thm:230}}.
We conclude that $\alpha$ is bijective.
\end{proof}

\begin{cor} \label{cor:242}
In the situation of Theorem \tup{\ref{thm:230}}, assume that $X$ has enough
$\mcal{M}_0$-acyclic open sets. Then for any $j \geq 0$ the sheaf 
of $R$-modules $\m^j \mcal{M}$ is $\m$-adically complete.
\end{cor}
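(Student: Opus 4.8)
The plan is to deduce the $\m$-adic completeness of the sheaf $\m^j\mcal{M}$ from the already-established completeness of its sections $\m^j M = \Gamma(U, \m^j\mcal{M})$ over $\mcal{M}_0$-acyclic open sets (Corollary \ref{cor:241}), using the fact that a sheaf which is locally complete — on a basis of open sets stable under finite intersection — is complete, provided the relevant comparison maps for sections behave well. Concretely, $\m$-adic completeness of a sheaf $\mcal{N}$ means the canonical map $\mcal{N} \to \lim_{\leftarrow i}\, (R_i \ot_R \mcal{N})$ is an isomorphism of sheaves. Since $\what{\mcal{N}} = \lim_{\leftarrow i}(R_i\ot_R\mcal{N})$ is computed by taking the inverse limit of the presheaves $U \mapsto \Gamma(U, R_i\ot_R\mcal{N})$ and then sheafifying, it suffices to check that $\mcal{N} \to \what{\mcal{N}}$ is an isomorphism on stalks, and for that it is enough to exhibit a basis of open sets on which the map is bijective on sections.

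First I would fix the notation: set $\mcal{N} := \m^j\mcal{M}$, and let $\mcal{B}$ be the collection of $\mcal{M}_0$-acyclic open subsets of $X$. By hypothesis $X$ has enough $\mcal{M}_0$-acyclic open sets, so $\mcal{B}$ is a basis for the topology of $X$ (any open covering of an open set can be refined by an $\mcal{M}_0$-acyclic covering, in particular every open set is a union of members of $\mcal{B}$), and moreover finite intersections of members of $\mcal{B}$ lie in $\mcal{B}$. For each $U \in \mcal{B}$, Corollary \ref{cor:241} (applied with this $U$, which is legitimate since $\mcal{M}$ is flat and complete and $X$ has enough $\mcal{M}_0$-acyclic open sets) tells us that $\Gamma(U,\mcal{N}) = \m^j M$ with $M = \Gamma(U,\mcal{M})$, and that this $R$-module is $\m$-adically complete, i.e.\ $\m^j M \to \lim_{\leftarrow i}\, R_i\ot_R(\m^j M)$ is bijective.

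Next I would compare this section-wise statement with the sheaf completion. For $U \in \mcal{B}$ one has $\Gamma\bigl(U, R_i\ot_R\mcal{N}\bigr) \cong R_i\ot_R\Gamma(U,\mcal{N})$: indeed $\mcal{N}\big|_U$ need not be flat, but applying Theorem \ref{thm:230} (or rather its proof / Corollary \ref{cor:241}) to the short exact sequences $0 \to \m^{i+1}\mcal{N} \to \mcal{N} \to R_i\ot_R\mcal{N} \to 0$ together with the already-known identifications of sections of $\mcal{M}$, $\m^k\mcal{M}$ and their quotients over $\mcal{B}$, gives the required compatibility — this is the analogue of diagram \tup{(\ref{eqn:36})}. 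Taking the inverse limit over $i$ then yields $\Gamma\bigl(U,\what{\mcal{N}}\bigr) \cong \lim_{\leftarrow i}\, R_i\ot_R\Gamma(U,\mcal{N}) = \lim_{\leftarrow i}\, R_i\ot_R(\m^j M)$, which equals $\m^j M = \Gamma(U,\mcal{N})$ by the completeness statement above. Since $\mcal{B}$ is a basis, a sheaf map inducing a bijection on $\Gamma(U,-)$ for all $U \in \mcal{B}$ is an isomorphism; hence $\mcal{N} = \m^j\mcal{M}$ is $\m$-adically complete.

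The main obstacle is the middle step: justifying $\Gamma(U, R_i\ot_R\mcal{N}) \cong R_i\ot_R\Gamma(U,\mcal{N})$ for $U \in \mcal{B}$, since $R_i\ot_R(-)$ and $\Gamma(U,-)$ do not commute in general and $\mcal{N}$ is not $R$-flat. The fix is not to argue directly but to reduce to the flat sheaf $\mcal{M}$: the sheaves $R_i\ot_R\mcal{N} = \m^j\mcal{M}/\m^{j+i+1}\mcal{M}$ fit into exact sequences whose other terms are $\m^k\mcal{M}/\m^l\mcal{M}$, and for all of these one controls the sections over $\mcal{B}$ by combining Theorem \ref{thm:230}, Corollary \ref{cor:241} and left-exactness of $\Gamma(U,-)$, exactly as in the proof of Corollary \ref{cor:241}. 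Once this bookkeeping is done, passing to the inverse limit and invoking "locally complete on a basis implies complete" finishes the argument; both of those are routine.
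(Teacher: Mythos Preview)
Your proposal is correct and follows the same overall strategy as the paper: reduce to a basis $\mcal{B}$ of $\mcal{M}_0$-acyclic open sets, identify $\Gamma(V, R_i \ot_R \m^j\mcal{M})$ for $V \in \mcal{B}$, and use the completeness of $\m^j M$ from Corollary~\ref{cor:241} to pass to the inverse limit.

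The one place you and the paper differ is in the ``main obstacle''. You propose to compute $\Gamma(V, R_i \ot_R \m^j\mcal{M})$ by writing $R_i \ot_R \m^j\mcal{M} \cong \m^j\mcal{M}/\m^{j+i+1}\mcal{M}$, embedding this in a short exact sequence whose other terms are truncations $\mcal{M}_k$, and then repeating the diagram chase of Corollary~\ref{cor:241}. That works. The paper's argument is more economical: it simply applies Corollary~\ref{cor:241} once more, but with the artinian parameter algebra $R_i$ in place of $R$ and the flat $R_i$-module $\mcal{M}_i$ in place of $\mcal{M}$. This immediately yields $\Gamma(V, \m^j\mcal{M}_i) \cong \m^j M_i$, and since the inverse systems $\{\m^j\mcal{M}_i\}_i$ and $\{R_i \ot_R \m^j\mcal{M}\}_i$ are cofinal, the limit comparison follows. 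Your route unpacks this black-box application by hand; the paper's route reuses the corollary already proved.

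One minor point: you assert that $\mcal{B}$ is closed under finite intersections. Definition~\ref{dfn:23}(3) does not give this directly (it only says any covering can be refined by one whose finite intersections are acyclic). Fortunately you do not actually use closure under intersections anywhere---checking a sheaf map on a basis is enough---so this does not affect your argument.
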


\begin{proof}
Since the $\mcal{M}_0$-acyclic open sets form a basis of the topology of
$X$, it is enough to prove that the canonical homomorphism 
\[ \Gamma(V, \m^{j}  \mcal{M}) \to  
\lim_{\leftarrow i}\, \Gamma(V, R_i \ot_R \m^{j}  \mcal{M}) \]
is bijective for any $\mcal{M}_0$-acyclic open set $V$. 

Define $M := \Gamma(V, \mcal{M})$ and $M_i := R_i \ot_R M$. Now 
$R_i \ot_R \m^{j} \mcal{M} \cong \m^{j} \mcal{M}_i$, 
and by Corollary \ref{cor:241}, applied to $R_i$ instead of $R$, we know that 
$\Gamma(V, \m^{j} \mcal{M}_i) \cong \m^{j} M_i$. But 
$\m^{j} M_i \cong R_i \ot_R \m^{j} M$. Since $\m^{j} M$ is $\m$-adically
complete, it follows that 
$\m^{j} M \to \lim_{\leftarrow i}\, \m^{j} M_i$
is bijective. 
\end{proof}

An $\m$-adic system of $R$-modules on $X$ is the sheaf version of what we have
in Definition \ref{dfn:31}.

\begin{prop}[{\cite[Corollary 5.10]{Ye4}}] \label{prop:16}
Let $\{ \mcal{M}_i \}_{i \in \mbb{N}}$ be an $\m$-adic system of
$R$-modules on $X$. Assume that $X$  has enough $\mcal{M}_0$-acyclic
open coverings, and that each $\mcal{\mcal{M}}_i$ is flat over $R_i$. Then 
$\mcal{M} := \lim_{\leftarrow i}\, \mcal{M}_i$
is a flat and $\m$-adically complete sheaf of $R$-modules,
and the canonical homomorphisms 
$R_i \otimes_R \mcal{M} \to \mcal{M}_i$
are isomorphisms.
\end{prop}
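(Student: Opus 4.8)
The plan is to reduce all three assertions -- flatness of $\mcal{M}$, its $\m$-adic completeness, and bijectivity of $R_i \otimes_R \mcal{M} \to \mcal{M}_i$ -- to statements about the $R$-modules of sections over $\mcal{M}_0$-acyclic open sets, and then to feed those into Proposition \ref{prop:12}. This reduction is legitimate because all three properties are local on $X$: flatness of $\mcal{M}$ over $R$ is tested on stalks, and a stalk $\mcal{M}_x$ is the filtered colimit of the modules $\Gamma(U, \mcal{M})$ as $U$ runs over the $\mcal{M}_0$-acyclic open neighborhoods of $x$ (these form a neighborhood basis), so since a filtered colimit of flat modules is flat it suffices to show $\Gamma(U, \mcal{M})$ is $R$-flat for each such $U$; likewise the canonical sheaf morphisms $\mcal{M} \to \what{\mcal{M}}$ and $R_i \otimes_R \mcal{M} \to \mcal{M}_i$ are isomorphisms as soon as they induce bijections on sections over every $\mcal{M}_0$-acyclic open set, since those form a basis of the topology of $X$.

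So I fix an $\mcal{M}_0$-acyclic open set $U$, write $M_i := \Gamma(U, \mcal{M}_i)$, $M_0 := \Gamma(U, \mcal{M}_0)$ and $M := \Gamma(U, \mcal{M})$; since an inverse limit of sheaves is computed sectionwise, $M = \lim_{\leftarrow i} M_i$. The core step is a local computation. For each $k \le i+1$, iterating condition (ii) of the defining $\m$-adic system (the sheaf analogue of Definition \ref{dfn:31}) identifies $\mcal{M}_{k-1}$ with $\mcal{M}_i / \m^k \mcal{M}_i$, while flatness of $\mcal{M}_i$ over $R_i$ exhibits $\m^k \mcal{M}_i$ as the image of an injective sheaf morphism $(\m^k / \m^{i+1}) \otimes_{\K} \mcal{M}_0 \to \mcal{M}_i$; hence there is a short exact sequence of sheaves
\[ 0 \to (\m^k / \m^{i+1}) \otimes_{\K} \mcal{M}_0 \to \mcal{M}_i \to \mcal{M}_{k-1} \to 0 . \]
Because $R$ is noetherian, $\m^k / \m^{i+1}$ is a finite-dimensional $\K$-vector space, so the left-hand sheaf is a finite direct sum of copies of $\mcal{M}_0$ and is therefore $\mcal{M}_0$-acyclic on $U$. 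Consequently $\Gamma(U, -)$ keeps the sequence exact, and -- using the explicit description of the left term together with $\m^{i+1} M_i = 0$ -- identifies $\Gamma(U, \m^k \mcal{M}_i)$ with $\m^k M_i$. Reading off these sequences for $k = i+1$ shows that $\{ M_i \}$ is an $\m$-adic system of $R$-modules, and combining them for varying $k$ gives $\opn{gr}_{\m}(M_i) \cong \opn{gr}_{\m}(R_i) \otimes_{\K} M_0$, so that the graded flatness criterion over the artinian local ring $R_i$ (cf.\ \cite[Theorem III.5.1]{Bo1}) makes each $M_i$ flat over $R_i$.

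At this point Proposition \ref{prop:12} takes over: since $M = \lim_{\leftarrow i} M_i$ with every $M_i$ flat over $R_i$, the implication (ii) $\Rightarrow$ (i) shows $M = \Gamma(U, \mcal{M})$ is a flat $\m$-adically complete $R$-module, and the final clause of that proposition identifies $R_i \otimes_R M$ with $M_i$. The first conclusion, applied for all $\mcal{M}_0$-acyclic $U$, gives flatness of $\mcal{M}$. For the other two, the morphism $R_i \otimes_R \mcal{M} \to \mcal{M}_i$ induces on sections over each $\mcal{M}_0$-acyclic $U$ exactly the bijection $R_i \otimes_R M \to M_i$ just obtained, hence is an isomorphism of sheaves; passing to the inverse limit in $i$ then yields $\what{\mcal{M}} = \lim_{\leftarrow i} (R_i \otimes_R \mcal{M}) \cong \lim_{\leftarrow i} \mcal{M}_i = \mcal{M}$, and one checks this agrees with the canonical map $\mcal{M} \to \what{\mcal{M}}$, so $\mcal{M}$ is $\m$-adically complete.

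The main obstacle is the second paragraph, namely the claim that over an $\mcal{M}_0$-acyclic $U$ the functor $\Gamma(U, -)$ preserves exactness of the displayed short exact sequences and carries the subsheaf $\m^k \mcal{M}_i$ onto the submodule $\m^k M_i$ of $M_i$. This is the one point where one cannot merely manipulate global sections but must argue with sheafification, and it is precisely where the two standing hypotheses are used: flatness of each $\mcal{M}_i$ turns the relevant kernel sheaves into finite direct sums of copies of $\mcal{M}_0$, and noetherianness of $R$ guarantees that those sums are finite, so that acyclicity of $\mcal{M}_0$ transfers to them. Everything after that is formal bookkeeping, carried out entirely at the level of section modules by Proposition \ref{prop:12} and then transported back to $\mcal{M}$ by the localness of flatness and of $\m$-adic completeness.
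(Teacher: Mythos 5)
Your overall plan -- reduce everything to sections over $\mcal{M}_0$-acyclic open sets and feed the resulting inverse system into Proposition \ref{prop:12} -- is the right one; note that the paper itself offers no proof here but simply cites \cite[Corollary 5.10]{Ye4}, whose machinery your reduction mirrors. However, the central displayed short exact sequence is wrong for the intermediate values $1 \leq k \leq i-1$, and the later steps lean on its explicit form. Flatness of $\mcal{M}_i$ over $R_i$ identifies the kernel of $\mcal{M}_i \to \mcal{M}_{k-1}$ with $\m^k \mcal{M}_i \cong (\m^k/\m^{i+1}) \otimes_{R_i} \mcal{M}_i$, \emph{not} with $(\m^k/\m^{i+1}) \otimes_{\K} \mcal{M}_0$: there is not even a canonical morphism $(\m^k/\m^{i+1}) \otimes_{\K} \mcal{M}_0 \to \mcal{M}_i$, since the prescription $\bar{r} \otimes \bar{m} \mapsto r m$ is ill-defined modulo $\m \mcal{M}_i$ unless $k = i$. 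Concretely, for $R = \K[[\hbar]]$, $i = 2$, $k = 1$, multiplication by $\hbar$ gives $\m \mcal{M}_2 \cong \mcal{M}_1$, which is in general a non-split extension of $\mcal{M}_0$ by $\mcal{M}_0$ rather than $\mcal{M}_0^{\oplus 2} = (\m/\m^3) \otimes_{\K} \mcal{M}_0$. Your acyclicity conclusion survives, because $\m^k \mcal{M}_i$ carries the filtration by the $\m^j \mcal{M}_i$ with graded pieces $(\m^j/\m^{j+1}) \otimes_{\K} \mcal{M}_0$ (finite sums of copies of $\mcal{M}_0$, as $\m^j/\m^{j+1}$ is finite-dimensional), and acyclicity passes through finite iterated extensions; but this is a different argument from the one you gave.

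The real casualty is the bookkeeping you extract ``from the explicit description of the left term'': the identification $\Gamma(U, \m^k \mcal{M}_i) = \m^k M_i$, the computation $\opn{gr}_{\m}(M_i) \cong \opn{gr}_{\m}(R_i) \otimes_{\K} M_0$, and hence flatness of $M_i$ over $R_i$, are not yet justified for $k < i$. They are true, but you need a replacement argument, for instance an induction on $i$ that uses only the top-graded sequences $0 \to (\m^i/\m^{i+1}) \otimes_{\K} \mcal{M}_0 \to \mcal{M}_i \to \mcal{M}_{i-1} \to 0$, where your identification of the kernel \emph{is} canonical (because $\m$ kills $\m^i/\m^{i+1}$). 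From these one gets surjectivity of $M_i \to M_{i-1}$ and hence of $M_i \to M_0$, which is exactly what is needed to see that the image of $(\m^i/\m^{i+1}) \otimes_{\K} M_0$ in $M_i$ equals $\m^i M_i$; one then concludes $M_i / \m^i M_i \cong M_{i-1}$, verifies injectivity of $(\m^i/\m^{i+1}) \otimes_{R_i} M_i \to M_i$ by left-exactness of $\Gamma(U,-)$, and obtains flatness of $M_i$ over $R_i$ from the local criterion for the nilpotent ideal $\m^i R_i$, inductively. With that repair, your appeal to Proposition \ref{prop:12}(ii)$\Rightarrow$(i) and its final clause, and the passage back to the sheaf statements via stalks over the basis of acyclic open sets, go through as you describe.
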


Suppose $(R', \m')$ is another parameter algebra, and
$\sigma : R \to  R'$ is a homomorphism. For a sheaf $\mcal{M}$ of
$R$-modules on $X$ we let 
\begin{equation} \label{eqn:235}
R' \hatotimes{R} \mcal{M}  = \lim_{\leftarrow i}\,
(R'_i \ot_R  \mcal{M}) , 
\end{equation}
where $R'_i := R' / {\m'}^{\, i+1}$.

\begin{cor} \label{cor:245}
Let $R$, $X$ and $\mcal{M}$ be as in Theorem \tup{\ref{thm:230}}.
Assume that $X$ has enough $\mcal{M}_0$-acyclic open coverings. 
Let $R'$ be another parameter algebra, and
$R \to  R'$ a homomorphism. Define
$\mcal{M}'_i := R'_i \otimes_{R} \mcal{M}$
and
$\mcal{M}' := R' \hatotimes{R} \mcal{M}$.
Then $\mcal{M}'$ is a flat and $\m'$-adically
complete sheaf of $R'$-modules, 
the canonical homomorphisms 
$R'_i \otimes_{R'} \mcal{M}' \to \mcal{M}'_i$
are isomorphisms, and $X$ has enough $\mcal{M}'_0$-acyclic open
coverings. 
\end{cor}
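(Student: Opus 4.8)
\textbf{Proof plan for Corollary \ref{cor:245}.}
The plan is to treat this as a routine consequence of the results already in place, the only subtlety being the transfer of the acyclicity hypothesis from $\mcal{M}_0$ to $\mcal{M}'_0$. First I would observe that $\mcal{M}'_0 = \K \ot_{R'} \mcal{M}' = R'_0 \ot_R \mcal{M}$; since $R'_0 = \K$ as well, we have $\mcal{M}'_0 = \K \ot_R \mcal{M} = \mcal{M}_0$ canonically (the homomorphism $\sigma : R \to R'$ is a $\K$-algebra homomorphism, so the two tensor products agree). Hence $X$ automatically has enough $\mcal{M}'_0$-acyclic open coverings, and an open set is $\mcal{M}_0$-acyclic if and only if it is $\mcal{M}'_0$-acyclic. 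This disposes of the last assertion and lets me reuse every acyclicity statement already proved for $\mcal{M}$.

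Next I would verify that $\{ \mcal{M}'_i \}_{i \in \N}$ is an $\m'$-adic system of $R'$-modules on $X$ (the sheaf analogue of Definition \ref{dfn:31}), each $\mcal{M}'_i$ flat over $R'_i$. Flatness: for a point $x \in X$ the stalk is $(\mcal{M}'_i)_x = R'_i \ot_R \mcal{M}_x$, and since $\mcal{M}_x$ is flat over $R$, the module $R'_i \ot_R \mcal{M}_x$ is flat over $R'_i$ by base change. Condition (i) of the $\m'$-adic-system definition, $\m'^{\, i+1} \mcal{M}'_i = 0$, is immediate since $\m'^{\, i+1} R'_i = 0$. Condition (ii), that $R'_i \ot_{R'_{i+1}} \mcal{M}'_{i+1} \to \mcal{M}'_i$ is an isomorphism, follows from the associativity of tensor product: $R'_i \ot_{R'_{i+1}} (R'_{i+1} \ot_R \mcal{M}) \cong R'_i \ot_R \mcal{M} = \mcal{M}'_i$.

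With the $\m'$-adic system in hand, Proposition \ref{prop:16} (applied over the parameter algebra $R'$, using that $X$ has enough $\mcal{M}'_0 = \mcal{M}_0$-acyclic open coverings) immediately gives that $\mcal{M}' = \lim_{\leftarrow i}\, \mcal{M}'_i$ is flat and $\m'$-adically complete over $R'$, and that the canonical maps $R'_i \ot_{R'} \mcal{M}' \to \mcal{M}'_i$ are isomorphisms. That is all three remaining assertions. The only place requiring care is the very first step: one must check that the identification $\mcal{M}'_0 = \mcal{M}_0$ is the correct one and that ``enough $\mcal{M}_0$-acyclic open coverings'' is literally a property of the sheaf $\mcal{M}_0$ (which it is, by Definition \ref{dfn:23}), so that it passes unchanged to $\mcal{M}'_0$; everything else is formal base change and an appeal to Proposition \ref{prop:16}. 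I expect no genuine obstacle here — the corollary is essentially a bookkeeping statement that the hypotheses of Theorem \ref{thm:230} and Proposition \ref{prop:16} are stable under the operation $\mcal{M} \mapsto R' \hatotimes{R} \mcal{M}$.
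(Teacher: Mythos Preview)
Your proposal is correct and follows exactly the paper's approach: observe that $\mcal{M}'_0 = \mcal{M}_0$ (since $R'_0 = R_0 = \K$), so the acyclicity hypothesis transfers, and then apply Proposition~\ref{prop:16} to the $\m'$-adic system $\{\mcal{M}'_i\}_{i \in \N}$. The paper's proof is a single sentence; you have simply spelled out the routine verifications (flatness of $\mcal{M}'_i$ over $R'_i$, the $\m'$-adic system conditions) that the paper leaves implicit.
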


\begin{proof}
Apply Proposition \ref{prop:16} to the $\m'$-adic system of $R'$-modules 
$\{ \mcal{M}'_i \}_{i \in \mbb{N}}$, noting that 
$\mcal{M}'_0 = \mcal{M}_0$ (since $R'_0 = R_0 = \K$).
\end{proof}

%\cleardoublepage
\section{Deformations of Sheaves of Algebras}
\label{sec:defs-sh}

Let $X$ be a topological space, and 
$\mcal{O}_X$ a sheaf of commutative $\K$-algebras on $X$. In this
section we define the notions of associative and Poisson $R$-deformations
of the sheaf $\mcal{O}_X$, and we establish some properties.
We work in the following setup:

\begin{setup} \label{setup:5}
$\K$ is a field; $(R, \m)$ is a parameter $\K$-algebra ; $X$ is a topological
space; and $\mcal{O}_X$ is a sheaf of commutative $\K$-algebras on $X$. The
assumption is that $X$ has enough $\mcal{O}_X$-acyclic open sets (see 
Definition \ref{dfn:23}).
\end{setup}

Recall our convention that associative algebras are unital, and commutative
algebras are associative (and unital).

\begin{dfn} \label{dfn:5}
Assume Setup \ref{setup:5}. 
An {\em associative $R$-deformation of $\mcal{O}_X$} is a sheaf
$\mcal{A}$ of flat $\m$-adically complete associative
$R$-algebras on $X$, together with an isomorphism of
sheaves of $\K$-algebras
$\psi : \K \otimes_R \mcal{A} \to \mcal{O}_X$, called an {\em augmentation}.

Suppose $\mcal{A}'$ is another associative $R$-deformation of
$\mcal{O}_X$. A {\em gauge transformation}
$g : \mcal{A} \to \mcal{A}'$
is an isomorphism of sheaves of $R$-algebras that commutes with
the augmentations to $\mcal{O}_X$.

We denote by $\cat{AssDef}(R, \mcal{O}_X)$ the groupoid whose objects are the
associative $R$-deformations of $\mcal{O}_X$, and the morphisms are the gauge
transformations.
\end{dfn}

\begin{rem}
Suppose $\opn{char} \K = 0$, $(X, \mcal{O}_X)$ is a smooth algebraic
variety over $\K$, and  $R = \K[[\hbar]]$. In our earlier
paper \cite{Ye1} we referred to an associative $R$-deformation of 
$\mcal{O}_X$ as a ``deformation quantization of $\mcal{O}_X$''. In
retrospect this name seems inappropriate,
and hence the new name used here.

Another, more substantial, change is that in 
\cite[Definition 1.6]{Ye1}
we required that the associative deformation $\mcal{A}$
shall be endowed with a differential structure. This turns out to be
redundant -- see Remark \ref{rem:defs-sh.110}.
\end{rem}

\begin{dfn} \label{dfn:12}
Assume Setup \ref{setup:5}. 
We view $\mcal{O}_X$ as a sheaf of Poisson $\K$-algebras
with the zero bracket.
A {\em Poisson $R$-deformation of $\mcal{O}_X$} is a sheaf
$\mcal{A}$ of flat $\m$-adically complete commutative
Poisson $R$-algebras on $X$, together with an isomorphism of
Poisson $\K$-algebras
$\psi : \K \otimes_R \mcal{A} \to \mcal{O}_X$,
called an {\em augmentation}.

Suppose $\mcal{A}'$ is another Poisson $R$-deformation of
$\mcal{O}_X$. A {\em gauge transformation}
$g : \mcal{A} \to \mcal{A}'$
is an isomorphism of sheaves of Poisson $R$-algebras that commutes
with the augmentations to $\mcal{O}_X$.

We denote by $\cat{PoisDef}(R, \mcal{O}_X)$ the groupoid whose objects are
the Poisson $R$-deformations of $\mcal{O}_X$, and the morphisms are the
gauge transformations.
\end{dfn}

\begin{prop} \label{prop:233}
Let $\mcal{A}$ be a Poisson \tup{(}resp.\ associative\tup{)}
$R$-deformation of $\mcal{O}_X$, and let $U$ be an
$\mcal{O}_X$-acyclic open set of $X$. 
Then $A := \Gamma(U, \mcal{A})$ is a 
Poisson \tup{(}resp.\ associative\tup{)} $R$-deformation of 
$C := \Gamma(U, \mcal{O}_X)$.
\end{prop}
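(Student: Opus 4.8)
The plan is to reduce the statement to the already-established facts about sheaves of complete modules and about Ore localizations. The key point is that everything we need --- flatness, $\m$-adic completeness, the augmentation isomorphism, and the algebra structure on global sections --- has essentially been packaged in Theorem \ref{thm:230} and Corollary \ref{cor:241}, together with the fact that $\Gamma(U, -)$ is a left-exact functor that commutes with finite limits.

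First I would apply Theorem \ref{thm:230} to the sheaf $\mcal{M} := \mcal{A}$ (using $\mcal{M}_0 = \K \ot_R \mcal{A} \cong \mcal{O}_X$, which is the structure sheaf, and the hypothesis that $U$ is $\mcal{O}_X$-acyclic). This immediately gives that the $R$-module $A = \Gamma(U, \mcal{A})$ is flat and $\m$-adically complete, and that the canonical homomorphism $R_i \ot_R A \to \Gamma(U, R_i \ot_R \mcal{A})$ is bijective for every $i$. In particular for $i = 0$ we get $\K \ot_R A \iso \Gamma(U, \K \ot_R \mcal{A}) \iso \Gamma(U, \mcal{O}_X) = C$, which provides the augmentation $\psi_A : \K \ot_R A \to C$.

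Next I would transport the algebra structure. Since $\mcal{A}$ is a sheaf of associative (resp.\ Poisson) $R$-algebras, the global sections $A = \Gamma(U, \mcal{A})$ inherit an associative $R$-bilinear multiplication (resp.\ a commutative multiplication and a Poisson bracket), simply because $\Gamma(U, -)$ is a functor from sheaves of $R$-algebras to $R$-algebras; the unit is $\Gamma(U, 1_{\mcal{A}})$. All the algebra axioms --- associativity, unitality, the Leibniz rule and Jacobi identity in the Poisson case --- hold for $A$ because they hold sectionwise (indeed stalkwise) for $\mcal{A}$. The augmentation $\psi_A$ is a homomorphism of $\K$-algebras (resp.\ Poisson $\K$-algebras) because it is obtained by applying $\Gamma(U, -)$ to the sheaf augmentation $\psi : \K \ot_R \mcal{A} \to \mcal{O}_X$ and then using the identification from Theorem \ref{thm:230}; functoriality makes it respect the relevant operations, and on $\mcal{O}_X$ the Poisson bracket is zero so the bracket condition is automatic.

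Combining these, $A$ is a flat $\m$-adically complete associative (resp.\ commutative Poisson) $R$-algebra with an augmentation $\K \ot_R A \iso C$, which is precisely the definition (Definition \ref{dfn:1}, resp.\ Definition \ref{dfn:2}) of an associative (resp.\ Poisson) $R$-deformation of $C$. I do not expect any genuine obstacle here: the only thing to be careful about is verifying that the canonical $\K$-algebra structure on $\K \ot_R \Gamma(U, \mcal{A})$ matches, under the Theorem \ref{thm:230} isomorphism, the $\K$-algebra structure on $\Gamma(U, \K \ot_R \mcal{A})$, and that this isomorphism is compatible with $\psi$; this is a diagram-chase using naturality of the canonical map $R_i \ot_R \Gamma(U, \mcal{M}) \to \Gamma(U, R_i \ot_R \mcal{M})$ in the sheaf $\mcal{M}$, applied to the multiplication map $\mcal{A} \ot_R \mcal{A} \to \mcal{A}$. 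The closest thing to a subtlety is making sure $\mcal{O}_X$-acyclicity of $U$ suffices as the acyclicity hypothesis, which it does since $\mcal{M}_0 \cong \mcal{O}_X$.
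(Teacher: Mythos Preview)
Your proof is correct and follows exactly the paper's approach: apply Theorem \ref{thm:230} to obtain flatness, $\m$-adic completeness, and the bijection $\K \ot_R A \to C$, and note that the algebra structure passes through $\Gamma(U,-)$. The paper's own proof is a one-liner citing Theorem \ref{thm:230}; your version simply spells out the routine details (and the mentions of Ore localizations and Corollary \ref{cor:241} in your opening are red herrings---neither is used or needed here).
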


\begin{proof}
By Theorem \ref{thm:230}, $A$ is a flat $\m$-adically complete $R$-algebra, and 
the homomorphism $\K \ot_R A \to C$ is bijective. 
\end{proof}

\begin{prop} \label{prop:23}
Let $\mcal{A}$ be a Poisson \tup{(}resp.\ associative\tup{)}
$R$-deformation of $\mcal{O}_X$,  let $R'$ be another
parameter $\K$-algebra, and let $\sigma : R \to  R'$ a $\K$-algebra
homomorphism. Define 
$\mcal{A}' := R' \hatotimes{R} \mcal{A}$.
Then $\mcal{A'}$ is a Poisson \tup{(}resp.\ associative\tup{)}
$R'$-deformation of $\mcal{O}_X$.
\end{prop}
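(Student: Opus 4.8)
The plan is to base this on the local version already established in Theorem \ref{thm:12} together with the completion-and-base-change results of Section \ref{sec:complete-sh}. First I would observe that the formula $\mcal{A}' := R' \hatotimes{R} \mcal{A} = \lim_{\leftarrow i}\, (R'_i \ot_R \mcal{A})$ from \eqref{eqn:235} makes sense because $\mcal{A}$ is flat and $\m$-adically complete over $R$. By Corollary \ref{cor:245}, applied with $\mcal{M} := \mcal{A}$ (note $\mcal{M}_0 = \OX$, so there are indeed enough $\mcal{M}_0$-acyclic open sets by Setup \ref{setup:5}), the sheaf $\mcal{A}'$ is flat over $R'$ and $\m'$-adically complete, and moreover the canonical maps $R'_i \ot_{R'} \mcal{A}' \to \mcal{A}'_i := R'_i \ot_R \mcal{A}$ are isomorphisms; the corollary also gives that $X$ has enough $\mcal{A}'_0$-acyclic open sets, with $\mcal{A}'_0 = \OX$. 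This disposes of the flatness and completeness conditions in Definitions \ref{dfn:5} and \ref{dfn:12}.

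Next I would supply the algebra structure. Each sheaf $\mcal{A}_i := R_i \ot_R \mcal{A}$ inherits from $\mcal{A}$ an $R_i$-bilinear associative multiplication (resp.\ Poisson bracket), and since $R'_i$ is an $R$-algebra and $\mcal{A}'_i = R'_i \ot_R \mcal{A}$, this extends to an $R'_i$-bilinear associative multiplication (resp.\ Poisson bracket) on $\mcal{A}'_i$ — concretely one can work on an $\OX$-acyclic basis, where by Theorem \ref{thm:230} the sections are $R_i \ot_R \Gamma(U,\mcal{A})$ and Proposition \ref{prop:15} (resp.\ the Poisson variant of Theorem \ref{thm:12}, Step 2) provides the deformed structure on sections, functorially in $U$. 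Passing to the inverse limit $\mcal{A}' = \lim_{\leftarrow i}\, \mcal{A}'_i$ equips $\mcal{A}'$ with an $R'$-bilinear multiplication (resp.\ bracket). Finally the augmentation: since $R'_0 = \K$, we have $\K \ot_{R'} \mcal{A}' \cong \mcal{A}'_0 = \K \ot_R \mcal{A}$, and composing with the augmentation $\psi : \K \ot_R \mcal{A} \iso \OX$ of $\mcal{A}$ gives an augmentation of $\mcal{A}'$; it is a homomorphism of associative (resp.\ Poisson) $\K$-algebras because it is so at each finite level.

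The one place requiring a little care — and what I expect to be the main obstacle — is verifying that the limit structure on $\mcal{A}'$ is genuinely associative (resp.\ satisfies the Jacobi identity and the derivation property), rather than merely bilinear. The cleanest way around this is to note that the relevant identities are equalities of ($R'$-multilinear) maps $\mcal{A}' \times \mcal{A}' \times \mcal{A}' \to \mcal{A}'$, and since $\mcal{A}' = \lim_{\leftarrow i}\, \mcal{A}'_i$ is $\m'$-adically complete with $\mcal{A}'/\m'^{\,i+1}\mcal{A}' \cong \mcal{A}'_i$, a map into $\mcal{A}'$ is determined by its reductions mod $\m'^{\,i+1}$; each identity holds in every $\mcal{A}'_i$ (where it follows from the corresponding identity in $\mcal{A}_i$, which holds since $\mcal{A}$ is a deformation), hence holds in $\mcal{A}'$. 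The unit of $\mcal{A}'$ is the compatible system of units $1_{\mcal{A}'_i}$, i.e.\ the image of $1_{\mcal{A}}$. Thus $\mcal{A}'$ is a sheaf of flat $\m'$-adically complete associative (resp.\ commutative Poisson) $R'$-algebras with augmentation to $\OX$, i.e.\ an $R'$-deformation of $\OX$. (Uniqueness of the structure, if wanted, is immediate from the sectionwise uniqueness in Proposition \ref{prop:15}, but it is not asserted in the statement.)
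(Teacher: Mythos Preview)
Your proposal is correct and follows essentially the same approach as the paper: invoke Corollary \ref{cor:245} for flatness, $\m'$-adic completeness, and the augmentation, and note that the algebra structure is induced at each finite level and passes to the limit. Your write-up is considerably more detailed than the paper's two-sentence proof; the only minor slip is the parenthetical reference to Theorem \ref{thm:12}, Step 2 (which concerns localization, not base change) --- Proposition \ref{prop:15} already covers both the associative and Poisson cases, so that citation is unnecessary.
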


\begin{proof}
The sheaf $\mcal{A'}$ has an induced $R'$-bilinear Poisson bracket 
(resp.\ multiplication). By Corollary \ref{cor:245} the sheaf of
$R'$-modules $\mcal{A'}$ is flat and $\m'$-adically complete, and the 
the canonical homomorphism $\K \ot_{R'} \mcal{A'} \to \OX$ is an isomorphism.
\end{proof}

Here is a converse to Proposition \ref{prop:233}, in the affine
algebro-geometric setting. 

\begin{thm} \label{thm:defs-sh.112}
Let $X$ be a smooth algebraic variety over $\K$, let $U$ be an affine open set
of $X$, and let $C := \Gamma(X, \mcal{O}_X)$. 
\begin{enumerate}
\item Let $A$ be a Poisson \tup{(}resp.\ associative\tup{)}
$R$-deformation of $C$. Then there exists a 
Poisson \tup{(}resp.\ associative\tup{)} $R$-deformation 
$\mcal{A}$ of $\mcal{O}_U$, together with a 
gauge transformation of deformations
$g : A \to \Gamma(U, \mcal{A})$. 

\item Let $\mcal{A}$ and $\mcal{A}'$ be Poisson \tup{(}resp.\
associative\tup{)}
$R$-deformations of $\mcal{O}_U$, and let
$h : \Gamma(U, \mcal{A}) \to \Gamma(U, \mcal{A}')$
be a gauge transformation of deformations. Then there is a unique 
gauge transformation of deformations
$\til{h} : \mcal{A} \to \mcal{A}'$
such that $\Gamma(U, \til{h}) = h$.
\end{enumerate}
\end{thm}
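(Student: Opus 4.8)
\textbf{Proof plan for Theorem \ref{thm:defs-sh.112}.}
The plan is to build the sheaf $\mcal{A}$ on the base of principal affine opens of $U$ and then show it has the required properties. First I would note that since $U = \opn{Spec} C$ is affine, its topology has a basis consisting of the principal opens $U_s = \opn{Spec} C_s$ for $s \in C$, and these are all $\mcal{O}_X$-acyclic. For each such $s$, Theorem \ref{thm:12} (applied to the deformation $A$ of $C$ and the principal localization $C \to C_s$) produces a Poisson (resp.\ associative) $R$-deformation $A_s$ of $C_s$, together with a homomorphism $A \to A_s$ lifting $C \to C_s$; and by part (2) of that theorem the pair $(A_s, A \to A_s)$ is unique up to unique gauge transformation. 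Given an inclusion $U_t \subset U_s$ (i.e.\ $C_s \to C_t$ is a further principal localization), the universal property of Theorem \ref{thm:12}(2), applied now with $A_s$ in the role of ``$A$'', yields a unique restriction homomorphism $A_s \to A_t$ compatible with the maps from $A$; uniqueness makes these restriction maps transitive. Hence $s \mapsto A_s$ is a presheaf on the base of principal opens of $U$, and I would let $\mcal{A}$ be the sheaf it generates (equivalently, sheafify, or define $\Gamma(V,\mcal{A}) := \lim_{\leftarrow} A_s$ over principal $U_s \subset V$).

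Next I would check that $\mcal{A}$ is an $R$-deformation of $\mcal{O}_U$. The key point is that $\mcal{A}$ is already a sheaf on the base, because each $A_s$ is a flat complete $R$-module with $\K \ot_R A_s \cong C_s$, and $C_s$ is (the sections of) the structure sheaf $\mcal{O}_{U_s}$, which satisfies descent for principal-open covers of affine schemes. Concretely: to see $\K \ot_R \mcal{A} \cong \mcal{O}_U$, observe that reduction mod $\m$ commutes with the relevant (co)limits by Proposition \ref{prop:12} / Theorem \ref{thm:220}, so $\K \ot_R \mcal{A}$ is the sheaf generated by $s \mapsto \K \ot_R A_s = C_s$, which is exactly $\mcal{O}_U$. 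Flatness and $\m$-adic completeness of $\mcal{A}$ are local, hence inherited from the $A_s$; more carefully, one writes $\mcal{A} = \lim_{\leftarrow i} (R_i \ot_R \mcal{A})$ with each $R_i \ot_R \mcal{A}$ an $R_i$-flat sheaf forming an $\m$-adic system, and invokes Proposition \ref{prop:16} (using that $U$, being an algebraic variety, has enough $\mcal{O}_U$-acyclic open sets). The Poisson bracket (resp.\ multiplication) on $\mcal{A}$ is defined sheaf-theoretically from the compatible brackets on the $A_s$. Finally, for part (1), I must produce the gauge transformation $g : A \to \Gamma(U, \mcal{A})$: the maps $A \to A_s$ assemble, by the sheaf property of $\mcal{A}$, to a homomorphism $A \to \Gamma(U, \mcal{A})$ of $R$-deformations of $C$; it is an isomorphism because reducing mod $\m^{i+1}$ gives $A_i \to \Gamma(U, R_i \ot_R \mcal{A}) = \Gamma(U_1, R_i \ot_R \mcal{A}) $... more precisely because the induced map $\K \ot_R A \to \K \ot_R \Gamma(U,\mcal{A}) \cong \Gamma(U,\mcal{O}_U) = C$ is the identity, both modules are flat and complete over $R$, and one applies Proposition \ref{prop:12} (a map of $\m$-adically free modules lifting an isomorphism mod $\m$ is an isomorphism, by the Complete Nakayama argument already used in the proof of that proposition).

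For part (2), given $\mcal{A}, \mcal{A}'$ and a gauge transformation $h$ on global sections, I would construct $\til{h}$ locally and glue. On a principal open $U_s$, localizing $h$ via Theorem \ref{thm:12}(2) applied to $\mcal{A}$: the composite $\Gamma(U,\mcal{A}) \xrightarrow{h} \Gamma(U,\mcal{A}') \to \Gamma(U_s,\mcal{A}')$ lifts $C \to C_s$, so there is a unique homomorphism of $R$-deformations $\Gamma(U_s,\mcal{A}) \to \Gamma(U_s,\mcal{A}')$ compatible with it; by symmetry (doing the same with $h^{-1}$) this local map is an isomorphism, i.e.\ a gauge transformation $\til{h}_s$. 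Uniqueness in Theorem \ref{thm:12}(2) forces the $\til{h}_s$ to be compatible with restriction, so they glue to a sheaf homomorphism $\til{h} : \mcal{A} \to \mcal{A}'$ with $\Gamma(U,\til{h}) = h$ (taking $s=1$); and uniqueness of $\til{h}$ follows since any such $\til{h}$ restricts on each $U_s$ to a map lifting $C \to C_s$ through $h$, hence equals $\til{h}_s$ by the uniqueness clause.

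\textbf{Main obstacle.} The genuinely nontrivial input is Theorem \ref{thm:12} and its uniqueness clause — everything here is an exercise in assembling those local Ore localizations into a sheaf. Beyond that, the one place demanding care is verifying that the presheaf $s \mapsto A_s$ is \emph{already a sheaf} on the base of principal opens (so that gluing introduces no cohomological obstruction): this rests on the fact that for an affine variety $U$ every principal open is $\mcal{O}_U$-acyclic, that the $A_i = R_i \ot_R A$ form an $\m$-adic system of quasi-coherent-type sheaves on $U$ for which Čech descent holds level by level, and that passing to the inverse limit over $i$ preserves exactness here because the system is flat — precisely the content of Corollary \ref{cor:245} and Proposition \ref{prop:16}. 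In the associative case one must additionally observe, as in the proof of Theorem \ref{thm:12}, that $\opn{gr}_\m$ of each $A_i$ is commutative, so Lemma \ref{lem:1} applies and the local rings of fractions behave well; this is where the noncommutative subtlety enters, but it is already handled upstream.
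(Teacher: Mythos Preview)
Your proposal is correct and follows essentially the same approach as the paper: build the presheaf $s \mapsto A_s$ on principal opens via Theorem \ref{thm:12}, verify it is already a sheaf, and use the uniqueness clause of Theorem \ref{thm:12}(2) for part (2). The one organizational difference is that the paper first treats the artinian case in full---proving exactness of the \v{C}ech sequence $0 \to A_s \to \prod_{k} A_{t_k} \to \prod_{k,l} A_{t_k t_l}$ by a direct induction on $i$ using flatness and the short exact sequence $0 \to \m^i R_i \to R_i \to R_{i-1} \to 0$---and only afterwards passes to the inverse limit via Proposition \ref{prop:16}; you instead argue with general $R$ throughout and defer the level-by-level check to your ``Main obstacle'' paragraph. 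That separation avoids the slight circularity in your middle paragraph, where you invoke Theorem \ref{thm:220} and Proposition \ref{prop:16} on $\mcal{A}$ before having established that $\mcal{A}$ is a flat complete sheaf to which they apply.
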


Note that part (2) implies that the pair $(\mcal{A}, g)$ of part (1)
is unique
up to a unique gauge transformation.

\begin{proof}
The proof is in several steps.

\medskip \noindent
Step 1. Assume $R$ is artinian. 
For an element $s \in C$ we
denote by $U_{s}$ the affine open set
$\{ x \in U \mid s(x) \neq 0 \}$; and we call it a principal open set.
Note that $\Gamma(U_{s}, \mcal{O}_U) \cong C_s$.
By Theorem \ref{thm:12} there is a deformation $A_s$ of $C_s$, unique
up to a unique gauge transformation. 

Now suppose $t$ is another element of $C$, and
$U_{t} \subset U_{s}$.
Then we have $\K$-algebra homomorphisms
$C \to C_s \to C_t$. Again by Theorem \ref{thm:12}, there is a unique
homomorphism of Poisson (resp.\ associative) $R$-algebras 
$A_s \to A_t$ that's compatible with the homomorphisms from $A$. 

By this process we obtain a presheaf of  Poisson (resp.\ associative)
$R$-algebras on the principal open sets of $U$. 
Since these open sets are a basis of the topology of $U$, 
according to \cite[Chapter 0$_{\text{I}}$, Section 3.2.1]{EGA-I} 
this gives rise to a presheaf $\mcal{A}$ of Poisson (resp.\ associative)
$R$-algebras on $U$, such that 
$\Gamma(U_s, \mcal{A}) = A_s$ for every principal open set $U_s$.

In order to show that $\mcal{A}$ is a sheaf, it suffices (by 
\cite[Chapter 0$_{\text{I}}$, Section 3.2.2]{EGA-I})
to prove that for any principal open set $U_s$, and any finite 
covering $U_s = \bigcup_{k \in K} U_{t_k}$ of $U_s$ by principal open sets,
the sequence of $R$-modules 
\begin{equation} \label{eqn:defs-sh.112}
0 \to A_s \to  \prod_{k_0 \in K} A_{t_{k_0}} \to
\prod_{k_0, k_1 \in K} A_{t_{k_0} t_{k_1}} 
\end{equation}
is exact. (Note that $U_{t_{k_0}} \cap U_{t_{k_1}} = U_{t_{k_0} t_{k_1}}$.)  
Let us write $R_i := R / \m^{i+1}$ as usual; so $R_0 = \K$, and 
$R_i = R$ for sufficiently large $i$. We will prove that the sequence 
gotten from (\ref{eqn:defs-sh.112}) by the operation $R_i \ot_R -$ is exact,
by induction on $i$. 
For $i = 0$ we have $R_0 \ot_R A_t = C_t = \Gamma(U_t, \mcal{O}_X)$ for any
$t \in C$; so the exactness of (\ref{eqn:defs-sh.112}) for $R_0$ is true
because $\mcal{O}_X$ is a sheaf.
Now assume $i > 0$, and the sequence is exact for all $R_{j}$, $j < i$. 
There is an exact sequence of $R$-modules 
\begin{equation} \label{eqn:defs-sh.113}
 0  \to \m^i R_i \to R_i \to R_{i-1} \to 0 ,
\end{equation}
and $\m^i R_i \cong \m^i / \m^{i+1}$, so this is an
$R_0$-module. All the $R$-modules in (\ref{eqn:defs-sh.112}) are flat, and
hence when we tensor this sequence  with the exact sequence 
(\ref{eqn:defs-sh.113}), written vertically, we get a commutative diagram with
exact columns. By assumption the rows corresponding to $\m^i R_i$ and $R_{i-1}$
are exact; and therefore the row in between, the one corresponding to $R_i$, is
also exact.

\medskip \noindent
Step 2. $R$ is still artinian. 
Let  $\mcal{A}$ be the sheaf of algebras from the first step.
Take a point $x \in U$. Then the stalk
$\mcal{A}_x \cong \opn{lim}_{\to} A_s$,
the limit taken over the elements $s \in C$ such that 
$x \in U_{s}$. This shows that $\mcal{A}_x$ is a flat $R$-module;
and hence the sheaf $\mcal{A}$ is flat. The construction of $\mcal{A}$ endows
it with an augmentation to $\mcal{O}_X$. We conclude that 
$\mcal{A}$ is an $R$-deformation of $\mcal{O}_U$.

Now look at the $R$-algebra homomorphism
$g : A \to \Gamma(U, \mcal{A})$.
Since both are flat $R$-algebras augmented to $C$, it follows that
$g$ is an isomorphism.

\medskip \noindent
Step 3. Here we handle part (2), still with $R$ artinian.
Suppose $\mcal{A}$ and $\mcal{A}'$ are
two  $R$-deformations of $\mcal{O}_U$.
Write $A := \Gamma(U, \mcal{A})$ and 
$A' := \Gamma(U, \mcal{A}')$.
We are given a gauge transformation 
$h : A \to A'$. Take $s \in C$. Since $C \to C_s$ is a principal
localization, and both
$\Gamma(U_s, \mcal{A})$ and $\Gamma(U_s, \mcal{A}')$ are
$R$-deformations of $C_s$, Theorem \ref{thm:12}(2) says that there is a
unique gauge transformation
$\Gamma(U_s, \mcal{A}) \iso \Gamma(U_s, \mcal{A}')$
that's compatible with the homomorphisms from $A$.
In this way we obtain an isomorphism of sheaves 
$\til{h} : \mcal{A} \to \mcal{A}'$ extending $h$; and it is unique.

\medskip \noindent
Step 4. Finally we allow $R$ to be noetherian. 
Then $R \cong \lim_{\leftarrow i}\, R_i$, and, letting
$A_i := R_i \otimes_R A$, we have $A \cong \lim_{\leftarrow i}\, A_i$.
By the previous steps for every $i$ there is an $R_i$-deformation 
$\mcal{A}_i$. Due to uniqueness these form an inverse system, and we
take $\mcal{A} := \lim_{\leftarrow i}\, \mcal{A}_i$. 
By Proposition \ref{prop:16} this is an $R$-deformation of $\mcal{O}_U$.

Part (2) is also proved by nilpotent approximation.
\end{proof}

\begin{cor} \label{cor:243}
Let $X$ be a smooth algebraic variety over $\K$, let $U$ be an affine open set
of $X$, and let $C := \Gamma(X, \mcal{O}_X)$. Then the morphisms of groupoids
\[ \Gamma(U, -) : \cat{AssDef}(R, \mcal{O}_X) \to 
\cat{AssDef}(R, C) \]
and 
\[ \Gamma(U, -) : \cat{PoisDef}(R, \mcal{O}_X) \to 
\cat{PoisDef}(R, C) \]
are equivalences.
\end{cor}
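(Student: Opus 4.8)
The plan is to check directly that $\Phi := \Gamma(U,-)$ is an equivalence of groupoids, i.e.\ that it is essentially surjective on objects and fully faithful, in both the associative and the Poisson cases at once. First I would record that $\Phi$ is a well defined morphism of groupoids. An affine open set of an algebraic variety is $\mcal{O}_X$-acyclic by Example \ref{exa:20}(1) (applied with $\mcal{N} = \mcal{O}_X$, which is coherent), so Proposition \ref{prop:233} applies and shows that $\Gamma(U, \mcal{A})$ is a Poisson (resp.\ associative) $R$-deformation of $C$ whenever $\mcal{A}$ is a Poisson (resp.\ associative) $R$-deformation of $\mcal{O}_U$. A gauge transformation $g : \mcal{A} \to \mcal{A}'$ of sheaves of $R$-algebras restricts to a gauge transformation $\Gamma(U, g)$ on global sections, compatibly with identities and composition; this is the functor $\Phi$.

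Next I would read off the two remaining properties from Theorem \ref{thm:defs-sh.112}. Essential surjectivity on objects is exactly part (1): given a deformation $A$ of $C$, that part produces a deformation $\mcal{A}$ of $\mcal{O}_U$ together with a gauge transformation $A \iso \Gamma(U, \mcal{A}) = \Phi(\mcal{A})$, so $A$ is isomorphic in $\cat{AssDef}(R, C)$ (resp.\ $\cat{PoisDef}(R, C)$) to an object in the image of $\Phi$. For full faithfulness I must show that for any two deformations $\mcal{A}, \mcal{A}'$ of $\mcal{O}_U$ the map $g \mapsto \Gamma(U, g)$, from gauge transformations $\mcal{A} \to \mcal{A}'$ to gauge transformations $\Gamma(U, \mcal{A}) \to \Gamma(U, \mcal{A}')$, is bijective; this is precisely Theorem \ref{thm:defs-sh.112}(2). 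The existence of $\til h$ with $\Gamma(U, \til h) = h$ gives surjectivity, and its uniqueness gives injectivity, since $\Gamma(U, g_1) = \Gamma(U, g_2) =: h$ forces $g_1 = g_2 = \til h$. If the target Hom-set is empty there is nothing to check, since any gauge transformation $\mcal{A} \to \mcal{A}'$ would be carried by $\Phi$ into it. Together these establish that $\Phi$ is an equivalence of groupoids, which is the assertion of the corollary.

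I do not expect a real obstacle in this argument: once Proposition \ref{prop:233} and Theorem \ref{thm:defs-sh.112} are in hand, the corollary is a formal translation of "essential surjectivity $=$ existence" and "full faithfulness $=$ existence $+$ uniqueness", carried out uniformly in the two cases. All the genuine work — the Ore-localization lemmas, the construction and gluing of the presheaf $\mcal{A}$ on the principal opens of $U$, and the passage to the noetherian inverse limit — has already been done inside the proof of Theorem \ref{thm:defs-sh.112}. The one point worth emphasizing is the conceptual payoff: the corollary identifies the geometric groupoid of deformations of $\mcal{O}_U$ with the purely algebraic groupoid $\cat{AssDef}(R, C)$ (resp.\ $\cat{PoisDef}(R, C)$) of Section \ref{sec:defs-alg}, which is what later permits the affine-local analysis of deformations to be carried out entirely in terms of rings and DG Lie algebras.
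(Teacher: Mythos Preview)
Your proof is correct and follows exactly the paper's own approach: essential surjectivity from Theorem~\ref{thm:defs-sh.112}(1) and full faithfulness from Theorem~\ref{thm:defs-sh.112}(2), with Proposition~\ref{prop:233} supplying the well-definedness of the functor $\Gamma(U,-)$. The paper's proof is just a two-sentence version of what you wrote.
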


\begin{proof}
According to Theorem \ref{thm:defs-sh.112}(1) we have essential surjectivity on
objects. And Theorem \ref{thm:defs-sh.112}(2) says that the functors 
$\Gamma(U, -)$ are fully faithful. 
\end{proof}

%\cleardoublepage
\section{The Crossed Groupoid of Deformations}

Here we assume this setup:

\begin{setup} \label{setup:230}
$\K$ is a field of characteristic $0$; $(R, \m)$ is a parameter $\K$-algebra ;
$X$ is a topological space; and $\mcal{O}_X$ is a sheaf of commutative
$\K$-algebras on $X$. The assumption is that $X$ has enough $\mcal{O}_X$-acyclic
open sets.
\end{setup}

This is setup \ref{setup:5}, plus the condition $\opn{char} \K = 0$. 

Let us say a few words on {\em sheaves of pronilpotent groups}. 
Consider a sheaf of groups $\mcal{G}$ on $X$. 
A {\em central filtration} of $\mcal{G}$ is a descending
filtration $\{ \mcal{N}^j \}_{j \in \N}$ by normal subgroups,
such that $\mcal{N}^0 = \mcal{G}$, 
$\bigcap_j \mcal{N}^j = 1$, and  
$\mcal{N}^j / \mcal{N}^{j+1}$ is central in 
$\mcal{G} / \mcal{N}^{j+1}$ for every $j$. Thus 
$\mcal{G} / \mcal{N}^{j}$ is nilpotent. The sheaf $\mcal{G}$ is said to be {\em
complete} with respect to the filtration 
$\{ \mcal{N}^j \}_{j \in \N}$ 
if the canonical group homomorphism 
$\mcal{G} \to \lim_{\leftarrow j} \, (\mcal{G} / \mcal{N}^j)$
is an isomorphism. The sheaf of groups $\mcal{G}$ is called pronilpotent if it
is complete with respect to some central filtration. 

Next consider a sheaf $\mcal{A}$ of Lie $R$-algebras on $X$, such that for
every $j \in \N$ the sheaf $\mcal{B}^j := \m^j \mcal{A}$ is $\m$-adically
complete, and such that $[\mcal{A}, \mcal{A}] \subset \m \mcal{A}$.
Now $\mcal{B}^j / \mcal{B}^{j+i}$
is sheaf of nilpotent Lie $\K$-algebras,
and so there is an associated sheaf of nilpotent groups 
$\exp(\mcal{B}^j / \mcal{B}^{j+i})$, and
an isomorphism of sheaves of sets 
$\exp_{\mcal{B}^j / \mcal{B}^{j+i}} : \mcal{B}^j / \mcal{B}^{j+i} 
\to \exp (\mcal{B}^j / \mcal{B}^{j+i})$.
Passing to the inverse limit we obtain a sheaf of groups 
$\exp(\mcal{B}^j) := \lim_{\leftarrow i}\, (\mcal{B}^j / \mcal{B}^{j+i})$,
and an isomorphism of sheaves of sets 
$\exp_{\mcal{B}^j} : \mcal{B}^j \to \exp(\mcal{B}^j)$.
The sheaf of groups $\exp(\m^j \mcal{A}) = \exp(\mcal{B}^j)$ is pronilpotent;
indeed, 
$\{ \exp(\mcal{B}^{j + i}) \}_{i \in \N}$ is a central filtration of 
$\exp(\mcal{B}^j)$, and $\exp(\mcal{B}^j)$ is complete with respect to this
filtration. 
This construction is functorial: if $\mcal{A}'$ is another such sheaf of
Lie $R$-algebras, and $\phi : \mcal{A} \to \mcal{A}'$
is an $R$-linear Lie homomorphism, then there is a group homomorphism 
$\exp(\phi) : \exp(\m^j \mcal{A}) \to \exp(\m^j \mcal{A}')$, and 
$\exp(\phi) \circ \exp_{\m^j \mcal{A}} = \exp_{\m^j \mcal{A}'} \circ \, \phi$.

Let $\al \in \mcal{A}$ be a local section, defined on some open set $U$. There
is an $R$-linear endomorphism $\opn{ad}_{\mcal{A}}(\al)$ of $\mcal{A}|_U$ whose
formula is 
\begin{equation} \label{eqn:236}
\opn{ad}_{\mcal{A}}(\al)(\al') := [\al, \al'] ,
\end{equation}
where $[-,-]$ is the Lie bracket of $\mcal{A}$. 
Let us denote by $\End_R(\mcal{A})$ the sheaf of $R$-module endomorphisms of
$\AA$. Then 
$\opn{ad}_{\mcal{A}} : \mcal{A} \to \End_R(\mcal{A})$
is a Lie algebra homomorphism. In this way we get a homomorphism of sheaves of
groups 
\begin{equation} \label{eqn:237}
\begin{aligned}
& \opn{Ad}_{\mcal{A}} : \exp(\mcal{A}) \to \Aut_R(\mcal{A}) , 
\\ 
& \opn{Ad}_{\mcal{A}}(\exp_{\mcal{A}}(\al)) :=
\exp(\opn{ad}_{\mcal{A}}(\al)) =
\sum_{i \geq 0} \, \smfrac{1}{i!} \,
\underset{i}{\underbrace{\opn{ad}_{\mcal{A}}(\al) \circ \cdots \circ 
\opn{ad}_{\mcal{A}}(\al)}} . 
\end{aligned} 
\end{equation}
Cf.\ \cite[Section 2.3]{Hu}.
Note that this series converges $\m$-adically, since 
$\opn{ad}_{\mcal{A}}(\al)^i(\mcal{A}) \subset \m^i \mcal{A}$.

Let $\mcal{A}$ be an associative (resp.\ Poisson) $R$-deformation of
$\mcal{O}_X$. Then $\mcal{A}$ has an $R$-linear Lie bracket on it; in the
associative case it is the commutator bracket
\[ [a_1, a_2] := a_1 \star a_2 - a_2 \star a_1 , \]
and in the Poisson case it is the Poisson bracket.

\begin{prop} \label{prop:237}
Let $\mcal{A}$ be an associative \tup{(}resp.\ Poisson\tup{)} $R$-deformation
of $\mcal{O}_X$. For a section $\al \in \Gamma(U, \m \mcal{A})$ the $R$-linear
automorphism $g := \exp(\opn{ad}_{\mcal{A}}(\al))$ of $\mcal{A}|_U$  from
\tup{(\ref{eqn:237})}  is a gauge transformation of $R$-deformations 
of $\OO_U$.
\end{prop}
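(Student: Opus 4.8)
The plan is to show that $g := \exp(\opn{ad}_{\mcal{A}}(\al))$ preserves the three structures defining a gauge transformation: it is an automorphism of $R$-algebras (associative multiplication or Poisson bracket), and it commutes with the augmentation $\psi : \K \ot_R \mcal{A} \to \OO_U$. The first point to settle is that $g$ is a well-defined $R$-linear automorphism of the sheaf $\mcal{A}|_U$: this is already granted by \tup{(\ref{eqn:237})}, since $\opn{ad}_{\mcal{A}}(\al)^i(\mcal{A}) \subset \m^i \mcal{A}$ forces the exponential series to converge $\m$-adically, and the sheaf $\mcal{A}$ is $\m$-adically complete by hypothesis. So the content is the multiplicativity.

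\emph{Multiplicativity.} First I would reduce to a local algebraic computation. Because $\mcal{A}$ and all the $\m^j \mcal{A}$ are $\m$-adically complete and $\K$ has characteristic $0$, it suffices to check that $g$ respects the product on each quotient $\mcal{A} / \m^{j+1} \mcal{A}$, i.e.\ after replacing $R$ by an artinian quotient $R_j$. There $\al$ lies in the nilpotent ideal $\m \mcal{A}_j$, so $\opn{ad}_{\mcal{A}}(\al)$ is a nilpotent derivation of the (associative or Poisson) $R_j$-algebra $\mcal{A}_j$ — nilpotent because $\opn{ad}(\al)^i$ lands in $\m^i \mcal{A}_j$, which vanishes for $i$ large. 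The key algebraic fact is the classical one: if $\delta$ is a nilpotent derivation of a $\Q$-algebra with respect to an associative product $\star$, then $\exp(\delta)$ is an algebra automorphism, because
\[
\exp(\delta)(a_1 \star a_2) =
\sum_{n \geq 0} \frac{1}{n!} \delta^n(a_1 \star a_2) =
\sum_{n \geq 0} \frac{1}{n!} \sum_{p + q = n} \binom{n}{p} \delta^p(a_1) \star \delta^q(a_2) =
\exp(\delta)(a_1) \star \exp(\delta)(a_2),
\]
the Leibniz rule being used in the middle step; the identical computation works verbatim with $\star$ replaced by the Poisson bracket $\{-,-\}$, since that too is a derivation in each argument and $\opn{ad}_{\mcal{A}}(\al)$ is a Lie-algebra derivation of it by the Jacobi identity. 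Since all sums are finite on $\mcal{A}_j$ and $X$ is covered by $\OO_X$-acyclic open sets, this establishes multiplicativity of $g$ on each $\mcal{A}_j$, hence on $\mcal{A}|_U$ in the limit.

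\emph{Augmentation and conclusion.} It remains to note that $g$ acts trivially modulo $\m \mcal{A}$: since $\al \in \m \mcal{A}$, we have $\opn{ad}_{\mcal{A}}(\al)(\mcal{A}) \subset \m \mcal{A}$, so $g \equiv \opn{id} \pmod{\m \mcal{A}}$, and therefore the induced map on $\mcal{O}_U \cong \K \ot_R \mcal{A}$ is the identity; in particular $g$ commutes with $\psi$. Being an $\m$-adically continuous $R$-algebra endomorphism congruent to the identity mod $\m \mcal{A}$, $g$ is automatically bijective (its inverse is $\exp(-\opn{ad}_{\mcal{A}}(\al))$, which converges for the same reason). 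Hence $g$ is a gauge transformation of $R$-deformations of $\OO_U$.

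I expect the only genuine obstacle to be bookkeeping rather than a conceptual hurdle: making precise that ``checking on $\mcal{A}_j$ for all $j$ suffices'' is legitimate — this is where one invokes $\m$-adic completeness of $\mcal{A}$ together with the fact that $R_j \ot_R \mcal{A} \cong \mcal{A} / \m^{j+1} \mcal{A}$ and that $g$ is compatible with the surjections $\mcal{A} \to \mcal{A}_j$ (functoriality of $\exp$ as recorded in the discussion preceding the proposition). Everything else is the standard ``exponential of a nilpotent derivation is an automorphism'' argument applied sheaf-locally.
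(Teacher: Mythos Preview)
Your proof is correct and follows essentially the same approach as the paper: both arguments observe that $\opn{ad}_{\mcal{A}}(\al)$ is a derivation of the relevant algebraic structure (associative product in one case, commutative product and Poisson bracket in the other), so its exponential is an algebra automorphism, and that $\opn{ad}_{\mcal{A}}(\al) \equiv 0 \pmod{\m}$ forces compatibility with the augmentation. The only cosmetic difference is that the paper works directly with the $\m$-adically convergent exponential (citing \cite[Section~2.3]{Hu} for the ``exponential of a derivation is an automorphism'' fact) rather than passing to artinian quotients $R_j$; your reduction is equally valid and makes the nilpotent Leibniz computation explicit, but it is not a genuinely different route.
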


\begin{proof}
In the associative case $\opn{ad}_{\mcal{A}}(\al)$ is a derivation of the 
algebra $\mcal{A}|_U$; so according to \cite[Section 2.3]{Hu}, $g$ is 
an automorphism the algebra $\mcal{A}|_U$.

In the Poisson case $\opn{ad}_{\mcal{A}}(\al)$ is a
derivation both of the commutative algebra $\mcal{A}|_U$ and of its Poisson
bracket. Hence $g$ is a Poisson automorphism of $\mcal{A}|_U$.

Since $\opn{ad}_{\mcal{A}}(\al) \equiv 0$ modulo $\m$, it follows that $g$
commutes with the augmentation $\mcal{A}|_U \to \mcal{O}_U$. 
\end{proof}

\begin{dfn} \label{dfn:240}
Let $\mcal{A}$ be an associative (resp.\ Poisson) $R$-deformation of
$\mcal{O}_X$. 
\begin{enumerate}
\item Define the sheaf of groups  
\[ \opn{IG}(\mcal{A}) := \exp(\m \mcal{A}) . \]
It is called the {\em sheaf of inner gauge group} of $\mcal{A}$.

\item For a local section $a = \exp(\al) \in \Gamma(U, \opn{IG}(\mcal{A}))$ we
define the
gauge transformation $\opn{ig}(a)$ of $\mcal{A}|_U$ to be 
\[ \opn{ig}(a) := \exp(\opn{ad}_{\mcal{A}}(\al)) . \]
\end{enumerate}
\end{dfn}

If $g : \mcal{A} \to \mcal{A}'$ is a gauge transformation, then there is an
induced isomorphism of sheaves of Lie algebras 
$g : \m \mcal{A} \to \m \mcal{A}'$,
and, by taking exponentials, an induced isomorphism of sheaves of groups 
\begin{equation} \label{eqn:240}
\opn{IG}(g) : \opn{IG}(\mcal{A}) \to \opn{IG}(\mcal{A}') . 
\end{equation}

\begin{prop} \label{prop:238}
The groupoid $\cat{AssDef}(R, \mcal{O}_X)$ 
\tup{(}resp.\ $\cat{PoisDef}(R, \mcal{O}_X)$\tup{)} is the $1$-truncation
of a crossed group\-oid, where\tup{:}
\begin{itemize}
\item The $2$-morph\-isms are the inner gauge transformations, namely the
elements of the groups $\Gamma(X, \opn{IG}(\mcal{A}))$.

\item The twisting by a gauge transformation $g : \mcal{A} \to \mcal{A}'$ is
the group isomorphism $\opn{IG}(g)$ from  \tup{(\ref{eqn:240})}.

\item The feedback $\opn{D}(a)$, for $a \in \Gamma(X, \opn{IG}(\mcal{A}))$,
is the group isomorphism $\opn{ig}(a)$ from Definition \tup{\ref{dfn:240}(2)}.
\end{itemize}
\end{prop}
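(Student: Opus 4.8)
The plan is to verify that the data described --- the groupoid $\cat{AssDef}(R, \mcal{O}_X)$ (resp.\ $\cat{PoisDef}(R, \mcal{O}_X)$) as the $1$-truncation, the groups $\Ga(X, \opn{IG}(\mcal{A}))$ as the groups of $2$-morphisms, the twisting $\opn{IG}(g)$, and the feedback $\opn{ig}(a)$ --- assembles into a crossed groupoid in the sense of Definition \ref{dfn:cosim.101}. By Proposition \ref{prop:200}(3), a crossed groupoid structure on a given groupoid $G_1$ is the same thing as a functor $\opn{IG} : G_1 \to \cat{Grp}$ together with a natural transformation $\opn{ig} : \opn{IG} \to \opn{Aut}_{G_1}$; so the cleanest route is to produce these two pieces of data and invoke that proposition, rather than checking conditions (i) and (ii) of Definition \ref{dfn:cosim.101} by hand. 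So the proof has essentially three parts: (a) $\opn{IG}$ is a functor; (b) each $\opn{ig}(a)$ really lies in $\opn{Aut}_{G_1}(\mcal{A}) = \cat{AssDef}(R, \mcal{O}_X)(\mcal{A})$, i.e.\ is a genuine gauge transformation; (c) $\opn{ig}$ is natural in $\mcal{A}$.

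First, for (a): on objects $\opn{IG}(\mcal{A}) = \exp(\m \mcal{A})$ is a sheaf of groups by the general discussion of sheaves of pronilpotent groups preceding Proposition \ref{prop:237} (applied to the sheaf of Lie $R$-algebras $\mcal{A}$, which satisfies $[\mcal{A}, \mcal{A}] \subset \m \mcal{A}$ and has $\m^j \mcal{A}$ $\m$-adically complete by Corollary \ref{cor:242}, hence its global sections $\Ga(X, \opn{IG}(\mcal{A}))$ form a group); on morphisms, a gauge transformation $g : \mcal{A} \to \mcal{A}'$ induces an isomorphism of sheaves of Lie algebras $g : \m\mcal{A} \to \m\mcal{A}'$, and functoriality of $\exp$ gives $\opn{IG}(g)$; compatibility $\opn{IG}(g' \circ g) = \opn{IG}(g') \circ \opn{IG}(g)$ and $\opn{IG}(\opn{id}) = \opn{id}$ follow from the corresponding facts for $\exp$ already recorded in that same preliminary discussion. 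Taking global sections $\Ga(X, -)$ then yields a functor to $\cat{Grp}$. For (b): this is exactly the content of Proposition \ref{prop:237} --- for $\al \in \Ga(X, \m\mcal{A})$ the automorphism $\exp(\opn{ad}_{\mcal{A}}(\al))$ is a gauge transformation of $R$-deformations of $\mcal{O}_X$ --- together with the fact that $a \mapsto \opn{ig}(a)$ is a group homomorphism $\Ga(X, \opn{IG}(\mcal{A})) \to \cat{AssDef}(R, \mcal{O}_X)(\mcal{A})$, which reduces to the identity $\opn{Ad}_{\mcal{A}}(a \cdot a') = \opn{Ad}_{\mcal{A}}(a) \circ \opn{Ad}_{\mcal{A}}(a')$; this in turn is the standard adjoint-representation fact (the Baker--Campbell--Hausdorff formula makes $\exp_{\mcal{A}}$ intertwine the group law with $\opn{ad}$, and $\opn{Ad} = \exp \circ \opn{ad}$), valid because $\opn{ad}_{\mcal{A}}(\al)^i(\mcal{A}) \subset \m^i\mcal{A}$ guarantees $\m$-adic convergence.

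For (c), naturality: given a gauge transformation $g : \mcal{A} \to \mcal{A}'$ and $a = \exp(\al) \in \Ga(X, \opn{IG}(\mcal{A}))$, I must check that the square relating $\opn{ig}$, $\opn{IG}(g)$ and $\opn{Aut}_{G_1}(g) = \opn{Ad}_{G_1}(g)$ commutes, i.e.\ $g \circ \opn{ig}(a) \circ g^{-1} = \opn{ig}(\opn{IG}(g)(a))$ as automorphisms of $\mcal{A}'$. Unwinding, $\opn{IG}(g)(a) = \exp(g(\al))$, so the right side is $\exp(\opn{ad}_{\mcal{A}'}(g(\al)))$; the left side is $g \circ \exp(\opn{ad}_{\mcal{A}}(\al)) \circ g^{-1} = \exp(g \circ \opn{ad}_{\mcal{A}}(\al) \circ g^{-1})$, and since $g$ is a Lie-algebra isomorphism one has $g \circ \opn{ad}_{\mcal{A}}(\al) \circ g^{-1} = \opn{ad}_{\mcal{A}'}(g(\al))$. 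This is a short computation once the convergence of all exponential series is noted (again from $\opn{ad}(\al)^i \subset \m^i$). With (a), (b), (c) in hand, Proposition \ref{prop:200}(3) produces the crossed groupoid whose $1$-truncation is $\cat{AssDef}(R, \mcal{O}_X)$, and conditions (i) and (ii) of Definition \ref{dfn:cosim.101} hold automatically. The Poisson case is identical, using the Poisson bracket in place of the commutator throughout and citing the Poisson half of Proposition \ref{prop:237}.

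I do not anticipate a serious obstacle here; the statement is essentially bookkeeping, and the only mild subtlety is to make sure every manipulation of the infinite exponential/adjoint series is legitimate, which is handled uniformly by the $\m$-adic estimate $\opn{ad}_{\mcal{A}}(\al)^i(\mcal{A}) \subset \m^i \mcal{A}$ for $\al \in \m\mcal{A}$ and by the $\m$-adic completeness of the sheaves $\m^j\mcal{A}$ (Corollary \ref{cor:242}), which is what allows one to pass between local sections, stalks, and global sections and to take inverse limits freely. The one place to be a little careful is that $\opn{ig}(a)$ for a \emph{global} section $a \in \Ga(X, \opn{IG}(\mcal{A}))$ is built from the local formula $\exp(\opn{ad}_{\mcal{A}}(\al))$ of Definition \ref{dfn:240}(2) applied with $U = X$; since that formula is manifestly compatible with restriction it glues to a global automorphism, so no sheaf-theoretic descent argument is needed beyond noting that $\Ga(X,-)$ of a sheaf homomorphism is a group/ring homomorphism.
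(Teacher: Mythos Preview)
Your argument is close to the paper's, and your steps (a) and (c) are essentially what the paper does for condition (i) of Definition \ref{dfn:cosim.101}: the commutativity $g \circ \opn{ad}_{\mcal{A}}(\al) \circ g^{-1} = \opn{ad}_{\mcal{A}'}(g(\al))$ exponentiates to $\opn{Ad}(g) \circ \opn{ig} = \opn{ig} \circ \opn{IG}(g)$, which is exactly the equivariance of the feedback required in (i). So far so good.

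The gap is in your appeal to Proposition \ref{prop:200}(3). That proposition, as stated, says only that \emph{given} a crossed groupoid one can recover it from the pair $(\opn{IG}, \opn{ig})$; it does \emph{not} assert that an arbitrary functor $\opn{IG} : G_1 \to \cat{Grp}$ together with an arbitrary natural transformation $\opn{ig} : \opn{IG} \to \opn{Aut}_{G_1}$ automatically satisfies the crossed-groupoid axioms. And indeed it cannot: condition (ii) of Definition \ref{dfn:cosim.101} --- that $\opn{Ad}_{G_1 \crvar G_2}(\opn{D}(a)) = \opn{Ad}_{G_2(\om)}(a)$, i.e.\ that $\opn{IG}(\opn{ig}(a))$ is conjugation by $a$ inside $\opn{IG}(\mcal{A})$ --- is an independent requirement not implied by naturality. (Counterexample: take $G_1$ with a single object, $\opn{IG}$ the constant functor at a nonabelian group $N$ with trivial $G_1$-action, and $\opn{ig}$ the trivial homomorphism; then (i) holds but (ii) fails.) Your proof as written never checks (ii), so the claim ``conditions (i) and (ii) \ldots\ hold automatically'' is unjustified.

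The fix is short and is precisely the second paragraph of the paper's proof: for $a = \exp(\al) \in \Gamma(X, \opn{IG}(\mcal{A}))$ one has $\opn{ig}(a) = \exp(\opn{ad}_{\mcal{A}}(\al))$, and $\opn{IG}(\opn{ig}(a))$ is by definition the restriction of this automorphism to $\m\mcal{A}$ followed by $\exp$; but $\exp(\opn{ad}_{\mcal{A}}(\al))$ restricted to $\m\mcal{A}$ is exactly the adjoint action of $\exp(\al)$ on the Lie algebra, which under $\exp$ becomes conjugation by $a$ in the group $\exp(\m\mcal{A}) = \opn{IG}(\mcal{A})$. Once you insert this verification, your argument is complete and matches the paper's.
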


\begin{proof}
We must verify the conditions in Definition \ref{dfn:cosim.101}. 
Take a  gauge transformation $g : \mcal{A} \to \mcal{A}'$.
The diagram of Lie algebra homomorphisms 
\[ \UseTips \xymatrix @C=7ex @R=5ex {
\m \mcal{A}
\ar[d]_{\opn{ad}}
\ar[r]^{g}
&
\m \mcal{A}'
\ar[d]^{\opn{ad}}
\\
\End(\mcal{A})
\ar[r]^{\opn{Ad}(g)}
&
\End(\mcal{A}')
} \]
is commutative. Taking the exponentials we see that 
$\opn{ig} \circ \opn{IG}(g) = \opn{Ad}(g) \circ \opn{ig}$.

Next let us look at $a = \exp(\al) \in \opn{IG}(\mcal{A})$.
Then $\opn{ig}(a) = \exp(\opn{ad}(\al)) : \mcal{A} \to \mcal{A}$,
and 
$\opn{IG}(\opn{ig}(a)) : \opn{IG}(\mcal{A}) \to \opn{IG}(\mcal{A})$
is the restriction of $\exp(\opn{ad}(\al))$ to $\m \mcal{A}$. 
This says that $\opn{IG}(\opn{ig}(a))$ is conjugation by $a$ in the group 
$\opn{IG}(\mcal{A})$.
\end{proof}

\begin{prop} \label{prop:239}
Let $U \subset X$ be an open set and $R \to R'$ a homomorphism of parameter
algebras. Then the formula 
$\mcal{A} \mapsto (R' \hatotimes{R} \mcal{A})|_U$
gives rise to morphisms of crossed groupoids
\[ \cat{AssDef}(R, \mcal{O}_X) \to \cat{AssDef}(R', \mcal{O}_U) \]
and
\[ \cat{PoisDef}(R, \mcal{O}_X) \to \cat{PoisDef}(R', \mcal{O}_U) . \]
\end{prop}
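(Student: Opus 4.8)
The statement to prove is Proposition \ref{prop:239}: that $\mcal{A} \mapsto (R' \hatotimes{R} \mcal{A})|_U$ defines a morphism of crossed groupoids from $\cat{AssDef}(R, \mcal{O}_X)$ to $\cat{AssDef}(R', \mcal{O}_U)$, and similarly in the Poisson case. The plan is to assemble this from pieces that are essentially already in place. First, by Proposition \ref{prop:23}, for an $R$-deformation $\mcal{A}$ of $\mcal{O}_X$ the sheaf $R' \hatotimes{R} \mcal{A}$ is an $R'$-deformation of $\mcal{O}_X$, and restricting to the open set $U$ gives an $R'$-deformation of $\mcal{O}_U$ (restriction clearly preserves flatness, $\m'$-adic completeness, the algebra/Poisson structure and the augmentation, since all these are checked stalkwise or on a basis of opens contained in $U$). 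This defines the map on objects. On $1$-morphisms, a gauge transformation $g : \mcal{A} \to \mcal{A}'$ induces $R' \hatotimes{R} g : R' \hatotimes{R} \mcal{A} \to R' \hatotimes{R} \mcal{A}'$ by functoriality of $R'_i \ot_R (-)$ and of the inverse limit, and this is again an isomorphism of sheaves of $R'$-algebras (resp.\ Poisson $R'$-algebras) commuting with the augmentations; restricting to $U$ gives the $1$-morphism. Functoriality in $g$ (respecting composition and identities) is immediate from functoriality of each $R'_i \ot_R (-)$, of $\lim_{\leftarrow i}$, and of $(-)|_U$.

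The second task is the $2$-morphism level and compatibility with the crossed-groupoid structure. Here I would invoke the functoriality of the $\exp$ construction recorded in the discussion preceding Proposition \ref{prop:237}: an $R$-linear Lie homomorphism induces a group homomorphism on the $\exp$ sheaves compatible with the $\exp$ maps. The base-change-and-restrict operation on the sheaf of Lie $R$-algebras $\m \mcal{A}$ produces the sheaf of Lie $R'$-algebras $\m' (R' \hatotimes{R} \mcal{A})|_U$ — one must check that $R' \hatotimes{R} \m\mcal{A}$ maps appropriately, using that $\m \cdot (R' \hatotimes{R} \mcal{A}) \subset \m' (R' \hatotimes{R} \mcal{A})$ because $\sigma(\m) \subset \m'$, together with the identification $\K \ot_{R'} (R' \hatotimes{R} \mcal{A}) \cong \mcal{O}_X$ from Proposition \ref{prop:23}. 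Applying $\exp$ and taking global sections over $U$ gives a group homomorphism $\Gamma(X, \opn{IG}(\mcal{A})) \to \Gamma(U, \opn{IG}((R' \hatotimes{R} \mcal{A})|_U))$, which is the map on $2$-morphisms. That this respects the twisting $\opn{IG}(g)$ follows because base change commutes with the commutator (resp.\ Poisson) bracket and with $g$; that it respects the feedback $\opn{ig}(a) = \exp(\opn{ad}(\al))$ follows because $\opn{ad}_{\mcal{A}}(\al)$ base-changes to $\opn{ad}$ of the image of $\al$, and $\exp$ of this, restricted suitably, is the required gauge transformation — exactly the bookkeeping already done in the proof of Proposition \ref{prop:238}.

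Finally one checks that the two groupoid morphisms $\Phi_1$ (on deformations and gauge transformations) and $\Phi_2$ (on inner gauge groups) agree on objects and are compatible with $\opn{Ad}$ and $\opn{D}$; this is precisely the content of Definition \ref{dfn:220}, and all the needed commutativities have been established in the previous step. I expect the only genuinely delicate point to be verifying that the various canonical maps — $R' \hatotimes{R} (\m \mcal{A})$ versus $\m' (R' \hatotimes{R} \mcal{A})$, and their restrictions to $U$ — are the ones that make the $\exp$ functoriality square commute; this requires a careful but routine unwinding using flatness of $\mcal{A}$ over $R$, the isomorphisms $R'_i \ot_{R'} (R' \hatotimes{R} \mcal{A}) \cong R'_i \ot_R \mcal{A}$ from Corollary \ref{cor:245}, and the identification $\opn{gr}^0_{\m'}(R' \hatotimes{R} \mcal{A}) \cong \mcal{O}_X$. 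Everything else is formal functoriality, and in fact most of the work is a matter of citing Propositions \ref{prop:23}, \ref{prop:238} and the $\exp$-functoriality discussion, then noting that restriction to an open set $U$ is an exact functor compatible with all the structures involved.
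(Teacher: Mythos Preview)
Your proposal is correct and follows essentially the same approach as the paper, which simply says that restriction to $U$ is clear and that the base change $R \to R'$ is handled by Proposition \ref{prop:23}. Your write-up is considerably more careful than the paper's two-line proof—in particular you actually address the $2$-morphism level and the compatibility with twisting and feedback, which the paper leaves entirely implicit—but the underlying argument is the same.
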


\begin{proof}
For restriction to $U'$ this is clear. As for $R \to R'$, this is Proposition
\ref{prop:23}. 
\end{proof}

\begin{prop} \label{prop:234}
Let $\mcal{A}$ be an associative $R$-deformation of $\OX$, with 
augmentation $\psi : \mcal{A} \to \OX$.  
There is a canonical isomorphism of sheaves of groups 
\[ \opn{IG}(\mcal{A}) \cong 
\opn{Ker} \big( \psi : \mcal{A}^{\times} \to \mcal{O}_X^{\times} \big) . \]
Under this isomorphism the inner action $\opn{ig}(a)$ 
is sent to the conjugation action by the invertible element $a$. 
\end{prop}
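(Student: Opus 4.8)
The plan is to exhibit the isomorphism on local sections and check it is well-defined, multiplicative, and intertwines the two actions. Fix an open set $V \subset X$ and a section $a = \exp_{\m\mcal{A}}(\al) \in \Gamma(V, \opn{IG}(\mcal{A})) = \Gamma(V, \exp(\m\mcal{A}))$, where $\al \in \Gamma(V, \m\mcal{A})$. Because $\m\mcal{A}$ is $\m$-adically complete (Theorem \ref{thm:220}(2c), applicable on the $\OX$-acyclic open sets that form a basis), the exponential series $\exp(\al) = \sum_{i \geq 0} \tfrac{1}{i!} \al^i$ converges in $\Gamma(V, \mcal{A})$ and lies in $1 + \Gamma(V, \m\mcal{A})$; this element is invertible in $\mcal{A}^{\times}$ with inverse $\exp(-\al)$, and since $\al \equiv 0$ modulo $\m$, its image under $\psi$ is $1 \in \mcal{O}_X^{\times}$. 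So the assignment $\exp_{\m\mcal{A}}(\al) \mapsto \exp(\al) \in 1 + \m\mcal{A}$ defines a map of sheaves $\opn{IG}(\mcal{A}) \to \opn{Ker}(\psi : \mcal{A}^{\times} \to \mcal{O}_X^{\times})$. Conversely, any local unit $u$ with $\psi(u) = 1$ is of the form $u = 1 + \be$ with $\be \in \m\mcal{A}$, and since $\char \K = 0$ and $\m\mcal{A}$ is complete, $\log(u) = \sum_{i \geq 1} \tfrac{(-1)^{i+1}}{i} \be^i$ converges and lies in $\m\mcal{A}$; this gives the inverse map, so we have a bijection of sheaves of sets.

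Next I would verify that this bijection is a group homomorphism, i.e.\ that it sends the group law on $\exp(\m\mcal{A})$ (which by construction/definition is transported from the Baker--Campbell--Hausdorff product on $\m\mcal{A}$) to the multiplication of the associative algebra $\mcal{A}$. This is the standard identity $\exp(\al) \exp(\be) = \exp\!\big(\opn{BCH}(\al, \be)\big)$ inside the complete (pro-nilpotent, associative) filtered algebra $\m\mcal{A}$, which holds formally over $\Q$ and hence over $\K$; convergence is guaranteed by completeness. Thus the bijection is an isomorphism of sheaves of groups $\opn{IG}(\mcal{A}) \iso \opn{Ker}(\psi)$.

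Finally I would identify the actions. On the left, $\opn{ig}(a)$ for $a = \exp(\al)$ is $\exp(\opn{ad}_{\mcal{A}}(\al))$ by Definition \ref{dfn:240}(2). On the right, conjugation by the unit $\exp(\al)$ sends $a' \mapsto \exp(\al)\, a'\, \exp(-\al)$. The required equality $\exp(\opn{ad}_{\mcal{A}}(\al))(a') = \exp(\al)\, a'\, \exp(\al)^{-1}$ is again a purely formal identity in an associative filtered $\Q$-algebra: both sides equal $\sum_{i \geq 0} \tfrac{1}{i!} \opn{ad}_{\mcal{A}}(\al)^i(a')$, since $\opn{ad}_{\mcal{A}}(\al) = L_{\al} - R_{\al}$ where $L_{\al}, R_{\al}$ are left/right multiplication (commuting operators), and $\exp(L_{\al} - R_{\al}) = \exp(L_{\al}) \exp(-R_{\al}) = L_{\exp(\al)} R_{\exp(-\al)}$. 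Convergence holds because $\opn{ad}_{\mcal{A}}(\al)^i(\mcal{A}) \subset \m^i \mcal{A}$, as already noted after \eqref{eqn:237}. This matches the asserted compatibility of the inner action with conjugation.

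\textbf{Main obstacle.} The substantive points are not the formal algebra identities (which are classical once one works inside a complete pro-nilpotent ring) but making sure the convergence and completeness invoked at each step are legitimate for \emph{sheaves}: one must restrict to a basis of $\OX$-acyclic open sets so that Theorem \ref{thm:220} guarantees $\Gamma(V, \m^i\mcal{A})$ is $\m$-adically complete and $\Gamma(V, \m^i\mcal{A}) = \m^i \Gamma(V, \mcal{A})$, and then check that the maps $\log$ and $\exp$ so defined on sections are compatible with restriction, hence glue to sheaf morphisms. Once the bookkeeping of "work on a basis, then sheafify" is set up, everything else is routine.
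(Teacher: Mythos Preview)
Your proposal is correct and follows essentially the same route as the paper: work on a basis of $\mcal{O}_X$-acyclic (affine) open sets, use the associative exponential and logarithm to identify $\exp(\m A)$ with $1+\m A=\opn{Ker}(\psi:A^{\times}\to C^{\times})$, verify the group law and the identity $\exp(\opn{ad}(\al))=\opn{Ad}(\exp(\al))$ via the standard formal manipulations, and then sheafify. The paper is terser (it cites Proposition~\ref{prop:233}, Corollary~\ref{cor:241} and Proposition~\ref{prop:12} in place of your explicit BCH and $L_{\al}-R_{\al}$ computations), but the substance is the same.
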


\begin{proof}
Let $U \subset X$ be an affine open set. According to Proposition
\ref{prop:233}, $A := \Gamma(U, \mcal{A})$ is an $R$-deformation of 
$C := \Gamma(U, \OX)$. Also, by Corollary \ref{cor:241} we have 
$\Gamma(U, \opn{IG}(\mcal{A})) \cong \exp(\m A)$ as groups. 
By Proposition \ref{prop:12} there is an isomorphism 
$A \iso R \hot C$ of augmented $R$-modules such that $1_A \mapsto 1_R \ot 1_C$.
Thus 
$\opn{Ker} \big( \psi : A^{\times} \to C^{\times} \big) = 1 + \m A$.

Since the Lie bracket on $\m A$ is the associative commutator, it follows that 
\[ \exp(\al) \mapsto \bosum_{i \geq 0} \smfrac{1}{i!} \,
\underset{i}{\underbrace{\al \star \cdots \star \al}}  \]
is a group isomorphism from the abstract pronilpotent group 
$\exp(\m A)$ to the multiplicative group 
$1 + \m A \subset A^{\times}$.
The action $\exp(\opn{ad}(\al))$ of $\exp(\al) \in \exp(\m A)$ goes to the 
conjugation action $\opn{Ad}(\exp(\al))$ in the ring $A$.

Finally, since the affine open sets are a basis of the topology, we get
the statement on the sheaf level.
\end{proof}

All the above holds of course for $R$-deformations of a commutative
$\K$-algebra $C$. Thus there are crossed groupoids
$\cat{AssDef}(R, C)$ and $\cat{PoisDef}(R, C)$,
where the $1$-morphisms are the gauge transformations, and 
the $2$-morphisms are the elements of the
groups $\opn{IG}(A) = \exp(\m A)$.

%\cleardoublepage
\section{The Deligne Crossed Groupoid}
\label{sec:deligne}

Here the base field $\K$ has characteristic $0$.
Let $\mfrak{g} = \boplus_{p \in \mbb{Z}}\, \mfrak{g}^p$
be a DG (differential graded) Lie algebra over $\K$ , with
differential $\d$ and Lie bracket $[-,-]$.
We define the pronilpotent DG Lie $R$-algebra
$\m \hot \g$ as follows. For every $p$ we let
$\m \hot \g^p := \what{\m \ot \g^p}$, the $\m$-adic completion of the
$R$-module $\m \ot \g^p$ (cf.\ Proposition \ref{prop:12}). 
Then 
$\m \hot \g := \bigoplus_{p \in \Z} \, \m \hot \g^p$.
The differential $\d$ and Lie bracket $[-,-]$ of $\m \hot \g$ are the
$R$-mutlilinear extensions of those of $\g$.

In degree $0$ we have a pronilpotent Lie algebra $\m \hot \g^0$, and we
denote by \lb $\opn{exp}(\m \hot \g^0)$ the associated
pronilpotent group. 
There is a canonical bijection of sets 
$\exp : \m \hot \g^0 \to \opn{exp}(\m \hot \g^0)$.
We call $\opn{exp}(\m \hot \g^0)$ the {\em gauge group} of 
$\m \hot \g$. 

As usual, for any element 
$\gamma \in \m \hot \g$, we denote by
$\opn{ad}(\gamma)$ the $R$-linear operator on $\m \hot \g$
with formula 
$\opn{ad}(\gamma)(\beta) := [\gamma, \beta]$.
If $\gamma \in \m \hot \g^0$, 
and we write 
$g := \opn{exp}(\gamma) \in \opn{exp}(\m \hot \g^0)$,
then we obtain an $R$-linear automorphism 
$\opn{Ad}(g) := \opn{exp}(\opn{ad}(\gamma))$
of the graded Lie algebra $\m \hot \g$ (that usually does not commute with
$\d$).

An {\em MC element} in  $\m \hot \g$
is an element
$\om \in \m \hot \g^1$
which satisfies the {\em Maurer-Cartan equation}
$\d(\om) + \smfrac{1}{2} [\om, \om] = 0$.
We denote by 
$\mrm{MC}(\m \hot \g)$ the set of MC elements.

The Lie algebra $\m \hot \g^0$
acts on the $R$-module $\m \hot \g^1$
also by the affine transformations
\begin{equation} \label{eqn:extras.5}
\opn{af}(\gamma)(\om) := \d(\gamma) - \opn{ad}(\gamma)(\om) = 
\d(\gamma) - [\gamma, \om] ,
\end{equation}
for $\gamma \in \m \hot \g^0$ and $\om \in \m \hot \g^1$.
This action integrates to an affine transformation
$\opn{Af}(g) := \opn{exp}(\opn{af}(\ga))$
of $\m \hot \g^1$, for $g := \exp(\ga)$. 
The action $\opn{Af}$ of the group $\exp(\m \hot \g^0)$ on $\m \hot \g^1$
preserves the set $\mrm{MC}(\m \hot \g)$, and we write
$\ol{\mrm{MC}}(\m \hot \g)$
for the quotient set by this action. 

Suppose $\mfrak{h}$ is another DG Lie algebra, and 
$\phi : \g \to \mfrak{h}$
is a homomorphism of DG Lie algebras. There is an induced
$R$-linear homomorphism 
$\bsym{1}_{\m} \hot \phi : \m \hot \g \to \m \hot \mfrak{h}$
of DG Lie algebras, and an induced function
\[ \mrm{MC}(\bsym{1}_{\m} \hot \phi) : \mrm{MC}(\m \hot \g) \to 
\mrm{MC}(\m \hot \mfrak{h}) . \]
If $\phi$ is a quasi-isomorphism then so is
$\bsym{1}_{\m} \hot \phi$, and on gauge equivalence classes of MC elements we
get a bijection
\begin{equation} \label{eqn:29}
\ol{\mrm{MC}}(\bsym{1}_{\m} \hot \phi) :
\ol{\mrm{MC}}(\m \hot \g) \to 
\ol{\mrm{MC}}(\m \hot \mfrak{h}) . 
\end{equation}
This is \cite[Theorem 4.2]{Ye5}. (The nilpotent case, i.e.\ $R$ artinian, was
known before of course; see \cite{GM} and \cite[Section I.3.4]{CKTB}.)

For an element $\om \in \mrm{MC}(\m \hot \g)$ we let
$\d_{\om} := \d + \opn{ad}(\om)$,
which is a derivation of degree $1$ and square $0$ of the graded Lie algebra
$\m \hot \g$. Note that for $\alpha \in \m \hot \g$ one has
$\d_{\om}(\alpha) = \d(\alpha) + [\om, \alpha]$,
and for $\gamma \in \m \hot \g^0$ one has
$\d_{\om}(\gamma) = \opn{af}(\gamma)(\om)$.

\begin{dfn} \label{dfn:dglie.101}
We say $\g = \boplus_{p \in \mbb{Z}}\, \mfrak{g}^p$ is a {\em quantum type DG
Lie algebra} if $\g^p = 0$ for all $p < -1$.
\end{dfn}

Suppose $\g$ is a quantum type DG Lie algebra. Take any 
$\om \in \mrm{MC}(\m \hot \g)$. The formula
$[\alpha_1, \alpha_2]_{\om} := 
\bigl[ \d_{\om}(\alpha_1), \alpha_2 \bigr]$
defines an $R$-bilinear Lie bracket on $\m \hot \g^{-1}$. 
We denote the resulting pronilpotent Lie algebra by
$(\m \hot \g^{-1})_{\om}$,
and the associated pronilpotent group is denoted by 
\begin{equation} \label{eqn:200}
N_{\om} := \exp(\m \hot \g^{-1})_{\om} .
\end{equation}
The function
$\d_{\om} : (\m \hot \g^{-1})_{\om} \to \m \hot \g^0$
is an $R$-linear Lie algebra homomorphism,  so it induces a group homomorphism 
\begin{equation} \label{eqn:Lie-desc.103}
\opn{D}_{\om} : N_{\om} \to \exp(\m \hot \g^0) \ , \
\opn{D}_{\om} := \exp(\d_{\om}) .
\end{equation}

Now take $g \in \exp(\m \hot \g^0)$, and let 
$\om' := \opn{Af}(g)(\om) \in \opn{MC}(\m \hot \g)$.
According to \cite[Corollary 6.9]{Ye5} there is a group isomorphism 
\begin{equation} \label{eqn:Lie-desc.104}
\opn{Ad}(g) : N_{\om} \iso N_{\om'} ,
\end{equation}
which is functorial in $g$, and the diagram 
\[ \UseTips \xymatrix @C=5ex @R=5ex {
N_{\om}
\ar[d]_{\opn{Ad}(g)}
\ar[r]^(0.34){\opn{D}_{\om}}
&
\opn{exp}(\m \hot \g^0)
\ar[d]^{\opn{Ad}(g)}
\\
N_{\om'}
\ar[r]^(0.34){\opn{D}_{\om'}}
&
\opn{exp}(\m \hot \g^0)
} \]
is commutative. By definition of the bracket $[-,-]_{\om}$, the adjoint
action in the Lie algebra $(\m \hot \g^{-1})_{\om}$ is
$\opn{ad}_{}(\al_1)(\al_2) = \opn{ad}(\d_{\om}(\al_1))(\al_2)$; 
hence, by exponentiating this equation, we see that conjugation in the group 
$N_{\om}$ is  
$\opn{Ad}_{N_{\om}}(a_1)(a_2) = \opn{Ad} (\opn{D}_{\om}(a_1))(a_2)$
for $a_i \in N_{\om}$.

Crossed groupoids were introduced in Definition \ref{dfn:cosim.101}.
The considerations above justify the next definition. 

\begin{dfn} \label{dfn:Lie-desc.101}
Let $\K$ be a field of characteristic $0$, let $\g$ be a quantum type DG Lie
$\K$-algebra, and let $(R, \m)$ be a parameter $\K$-algebra.
The {\em Deligne crossed groupoid} 
is the crossed groupoid $\opn{Del}(\g, R)$ with these
components:
\begin{itemize}
\item The groupoid $\opn{Del}_1(\g, R)$ is the transformation groupoid 
associated to the action $\opn{Af}$ of the gauge group $\exp(\m \hot \g^0)$
on the set $\opn{MC}(\m \hot \g)$.
(This is the usual Deligne groupoid of $\m \hot \g$.)

\item  The groupoid $\opn{Del}_2(\g, R)$ is the totally disconnected
groupoid with set of objects $\opn{MC}(\m \hot \g)$, and automorphism groups 
$N_{\om}$ from formula (\ref{eqn:200}).

\item The twisting $\opn{Ad}$ is the group isomorphism in formula 
(\ref{eqn:Lie-desc.104}).

\item The feedback $\opn{D}$ is the group homomorphism in formula 
(\ref{eqn:Lie-desc.103}).
\end{itemize}
\end{dfn}

It is obvious from the construction that $\opn{Del}(\g, R)$ is functorial in
both $\g$ and $R$. 

\begin{rem}
In the nilpotent case (i.e.\ when the ring $R$ is artinian) the Deligne crossed
groupoid was introduced by Deligne in a letter to Breen from 1994; see also 
\cite{Ge}. 

A more general construction than Definition \ref{dfn:Lie-desc.101} (for
unbounded DG Lie algebras) can be found in \cite{Ye5}. 
\end{rem}

%\cleardoublepage
\section{DG Lie Algebras and Deformations}
\label{sec:dglie}

In this section we recall the role of DG Lie algebras in deformation
quantization, and prove a few basic results. 
For more details see \cite[Section 1]{GM}, \cite[Section 2.3]{Ge}, 
\cite[Section I.3]{CKTB} or \cite[Section 1]{Ye5}. 
We assume here that the base field $\K$ has characteristic $0$. 

Let $V$ be a $\K$-module.
Then $R \hot V$ is an $\m$-adically complete $R$-module, with 
an augmentation $R \hot V \to V$ induced from the
augmentation $R \to \K$.
By gauge transformation of $R \hot V$ we mean an $R$-linear automorphism 
that commutes with the augmentation. 

 For $\ga \in \m \hot \opn{End}(V)$ we let 
\begin{equation} \label{eqn:dglie.118}
\opn{exp}(\ga) = 
\sum_{i \geq 0} \, \smfrac{1}{i!} \,
\underset{i}{\underbrace{\gamma \circ \cdots \circ \gamma}}
\in \opn{End}_R(R \hot V) . 
\end{equation}
This is a gauge transformation of $R \hot V$.

\begin{lem} \label{lem:dglie.111}
Let $V$ be a $\K$-module. Every gauge transformation $g$ of the augmented
$R$-module $R \hot V$ is uniquely of the form 
$g = \exp(\ga)$, for $\ga \in \m \hot \opn{End}(V)$.
\end{lem}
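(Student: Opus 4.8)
The plan is to recover $\ga$ as $\log(g)$, computed one nilpotent layer at a time over the quotients $R_i := R / \m^{i+1}$ and then assembled in the inverse limit. First I would write $P_i := R_i \ot_R (R \hot V) \cong R_i \ot V$ (the last assertion of Proposition \ref{prop:12}), so that $R \hot V = \lim_{\leftarrow i}\, P_i$; the gauge transformation $g$ induces $R_i$-linear automorphisms $g_i = \bsym{1} + \de_i$ of $P_i$. Since $g$ commutes with the augmentation $R \hot V \to V$, so does $g_i$ with $P_i \to V$; hence $\de_i$ maps $P_i$ into $\ker(P_i \to V) = \m_i P_i$, where $\m_i := \m / \m^{i+1}$ is the nilpotent maximal ideal of $R_i$. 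By $R_i$-linearity $\de_i$ then carries $\m_i^k P_i$ into $\m_i^{k+1} P_i$, so $\de_i^{i+1} = 0$: each $\de_i$ is a nilpotent operator.

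Consequently $\ga_i := \log(g_i) = \sum_{k=1}^{i} \frac{(-1)^{k-1}}{k}\, \de_i^{k}$ is a well-defined $R_i$-linear endomorphism of $P_i$ whose image lies in $\m_i P_i$ — here $\opn{char} \K = 0$ enters through the coefficients $\tfrac{1}{k}$ — and the classical formal identity $\exp(\log(1+t)) = 1+t$, specialized to the nilpotent $t = \de_i$, gives $\exp(\ga_i) = g_i$. Over the field $\K$ of characteristic $0$ the maps $\exp$ and $\log$ are mutually inverse bijections between nilpotent operators and the corresponding unipotent ones at each such finite level.

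The one step that is not purely formal is to check that $\ga_i$ lies in $\m_i \ot \opn{End}(V)$ and not merely in $\opn{End}_{R_i}(P_i)$. Restricting an $R_i$-linear endomorphism of $P_i = R_i \ot V$ whose image lies in $\m_i \ot V$ to the summand $1 \ot V$ identifies the set of such endomorphisms with $\opn{Hom}(V, \m_i \ot V)$; and since $R$ is noetherian with residue field $\K$, the $\K$-module $\m_i = \m / \m^{i+1}$ is finite-dimensional, so the canonical map $\m_i \ot \opn{End}(V) \to \opn{Hom}(V, \m_i \ot V)$ is bijective. Thus $\ga_i \in \m_i \ot \opn{End}(V)$. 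The $\ga_i$ are compatible under the reductions $R_{i+1} \to R_i$ (the extra top-degree term of $\ga_{i+1}$ reduces to $\tfrac{(-1)^i}{i+1} \de_i^{i+1} = 0$, and $\log$ is natural), so $\ga := \lim_{\leftarrow i}\, \ga_i$ is a well-defined element of $\m \hot \opn{End}(V) = \lim_{\leftarrow i}\, \bigl( (\m / \m^{i+1}) \ot \opn{End}(V) \bigr)$. Reducing the series \tup{(\ref{eqn:dglie.118})} modulo $\m^{i+1}$ then shows that $\exp(\ga)$ reduces to $\exp(\ga_i) = g_i$ for every $i$, hence $\exp(\ga) = g$.

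For uniqueness, if $\exp(\ga) = \exp(\ga')$ with $\ga, \ga' \in \m \hot \opn{End}(V)$, then reducing modulo $\m^{i+1}$ gives $\exp(\ga_i) = \exp(\ga'_i)$, and the injectivity of $\exp$ at each finite nilpotent level yields $\ga_i = \ga'_i$ for all $i$, whence $\ga = \ga'$. I expect the main obstacle to be the identification in the third paragraph — verifying that the abstract endomorphism $\log(g)$ genuinely belongs to $\m \hot \opn{End}(V)$ — which is precisely where the finite-dimensionality of $R_i = R / \m^{i+1}$ over $\K$ (a consequence of $R$ being noetherian local with residue field $\K$) is used; everything else is routine bookkeeping with $\exp$ and $\log$ in a pro-nilpotent setting.
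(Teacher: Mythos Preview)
Your proof is correct and follows essentially the same approach as the paper: both hinge on the identification $\opn{Hom}(V, \m_i \ot V) \cong \m_i \ot \opn{End}(V)$ coming from the finite-dimensionality of $\m_i$ over $\K$, and then recover $\ga$ as $\log(g)$. The only difference is organizational---the paper first passes to the limit to place $g - \bsym{1}$ in $\m \hot \opn{End}(V)$ and then takes $\log$ once in that pronilpotent algebra, whereas you take $\log$ at each finite level and assemble afterwards; this is a harmless reordering.
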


\begin{proof}
Take any gauge transformation $g$ of the augmented $R$-module $R \hot V$.
So $g : R \hot V \to R \hot V$ lifts $\bsym{1}_V$, the identity of $V$.
According to Proposition \ref{prop:12} we have $g = \bsym{1}_V + \phi$,
where $\phi : V \to \m \hot V$ is an arbitrary $\K$-linear
homomorphism. Since $\m_i := \m / \m^{i+1}$ is a finitely generated
$\K$-module, we have
\[ \opn{Hom}(V, \m \hot V) \cong
\lim_{\leftarrow i}\, \opn{Hom}(V, \m_i \ot V) \cong
\lim_{\leftarrow i}\, (\m_i \ot \opn{End}(V)) \cong
\m \hot \opn{End}(V) . \]
We see that $\phi \in \m \hot \opn{End}(V)$.
But then $g = \exp(\ga)$ for a unique 
$\ga \in \m \hot \opn{End}(V)$, namely 
$\ga = \log(\bsym{1}_V + \phi)$.
\end{proof}

Sometimes it is convenient to have a more explicit (but less
canonical) way of describing the $R$-module
$\m \hot V$. This is done via choice of {\em filtered $\K$-basis} of $\m$.

A filtered $\K$-basis of a finitely generated
$R$-module $M$ is a sequence
$\{ m_j \}_{j \geq 0}$ of elements of $M$ (finite if $M$
has finite length, and countable otherwise) whose symbols form a
$\K$-basis of the graded $\K$-module 
$\opn{gr}_{\m} (M)$.
It is easy to find such bases: simply choose a $\K$-basis of 
$\opn{gr}_{\m} (M)$ consisting of homogeneous elements, and lift it to
$M$. Once such a filtered basis is chosen, any element $m \in M$ has
a unique convergent power series expansion 
$m = \sum_{j \geq 0} \lambda_j m_j$, with $\lambda_j \in \K$. 

Let us choose a filtered $\K$-basis
$\{ r_j \}_{j \geq 0}$ of $R$, such that $r_0 = 1$. 
Then the sequence
$\{ r_j \}_{j \geq 1}$ is a filtered $\K$-basis of $\m$.

\begin{exa} \label{exa:14}
For the power series ring $R = \K[[\hbar]]$ the obvious filtered basis is 
$r_j := \hbar^j$.
\end{exa}

\begin{setup} \label{setup:3}
$\K$ is a field of characteristic $0$; $(R, \m)$ is a 
parameter $\K$-algebra (see Definition \ref{dfn:13}); 
and $C$ is a smooth integral commutative $\K$-algebra (i.e.\ $\opn{Spec} C$
is a smooth affine algebraic variety over $\K$).
\end{setup}

For Poisson deformations the relevant DG Lie algebra is the algebra
of {\em polyderivations}
\[ \mcal{T}_{\mrm{poly}}(C) = \boplus_{p = -1}^{n-1}
\mcal{T}^p_{\mrm{poly}}(C)  \]
of $C$ relative to $\K$, where $n := \opn{dim} C$. 
It is the exterior algebra over $C$ of the module of derivations
$\mcal{T}(C)$, but with a shift in degrees:
$\mcal{T}^p_{\mrm{poly}}(C) :=
\bwedge^{p+1}_C \mcal{T}(C)$.
The differential is zero, and the Lie bracket is the
Schouten-Nijenhuis bracket, that extends the usual Lie bracket on
$ \mcal{T}(C) = \mcal{T}^0_{\mrm{poly}}(C)$,
and its canonical action $\opn{ad}_C$ on 
$C = \mcal{T}^{-1}_{\mrm{poly}}(C)$
by derivations. 
The DG Lie algebra $\mcal{T}^{}_{\mrm{poly}}(C)$ is of course of
quantum type.

Passing to the DG Lie $R$-algebra
$\m \hot \mcal{T}_{\mrm{poly}}(C)$,
we have an action of the Lie algebra
$\m \hot \mcal{T}^0_{\mrm{poly}}(C)$
on the commutative algebra
$A := R  \hot C$
by $R$-linear derivations, which we denote by $\opn{ad}_A$.
If we choose a filtered 
$\K$-basis $\{ r_j \}_{j \geq 1}$ of $\m$, then 
for
$\gamma  =  \sum_{j \geq 1} r_j \otimes \gamma_j$
and $c \in C$ this action becomes
\[ \opn{ad}_A(\gamma)(c) = 
 \sum_{j \geq 1} r_j \otimes \opn{ad}_C(\gamma_j)(c) \in R \hot C . \]
Here we identify the element $c \in C$ with the tensor
$1_R \otimes c \in A = R \hot C$.
The exponential of $\opn{ad}_A(\gamma)$ is an  automorphism
$\opn{exp}(\opn{ad}_A(\gamma))$ 
of the $R$-module $A = R  \hot C$, as in (\ref{eqn:dglie.118}).

An element $\om \in \mcal{T}^1_{\mrm{poly}}(C)$ determines an antisymmetric
bilinear function $\{ -,- \}_{\om}$ on $C$. The formula 
for $\om = \ga_1 \wedge \ga_2$ is 
\begin{equation} \label{equ:301}
\{ c_1, c_2 \}_{\om} := 
\smfrac{1}{2} \bigl( \al_1(c_1) \al_2(c_2) - \al_1(c_2) \al_2(c_1) \bigr) 
\end{equation}
for $c_1, c_2 \in C$. 
Now take an element
$\om \in \m \hot \mcal{T}^1_{\mrm{poly}}(C)$.
By extending (\ref{equ:301}) $R$-linearly we get an
antisymmetric $R$-bilinear function
$\{ -,- \}_{\om}$ on $R \hot C$. 
If the expansion of $\om$ is 
$\om = \sum_{j \geq 1} r_j \otimes \om_j$,
then
\begin{equation} \label{eqn:extras.15}
\{ c_1, c_2 \}_{\om} :=
\sum_{j \geq 1} r_j \otimes \{ c_1, c_2 \}_{\om_j} 
\in R \hot C .
\end{equation}

\begin{dfn} \label{dfn:dglie.110}
Consider the commutative $R$-algebra 
$A := R \hot C$, with the obvious augmentation
$\psi : \K \otimes_R A \iso C$.
\begin{enumerate}
\item A {\em formal Poisson bracket} on $A$ is an $R$-bilinear
Poisson bracket that vanishes modulo $\m$. 
\item A {\em gauge transformation} of $A$ (as 
$R$-algebra) is an $R$-algebra automorphism that commutes with the
augmentation to $C$.
\end{enumerate}
\end{dfn}

According to Proposition \ref{prop:12} the commutative $R$-algebra 
$A := R \hot C$ is flat and $\m$-adically complete.
Therefore, by endowing it with a formal Poisson bracket $\om$, we
obtain a Poisson $R$-deformation of $C$, and we denote this 
deformation by $A_{\om}$.

The next  result summarizes the role of $\mcal{T}^{}_{\mrm{poly}}(C)$
in controlling formal Poisson brackets. 

\begin{prop} \label{prop:4}
Consider the augmented commutative $R$-algebra 
$A := R \hot C$.
\begin{enumerate}
\item The formula
$\opn{exp}(\gamma) \mapsto \opn{exp}(\opn{ad}_A(\gamma))$
determines a group isomorphism from 
$\opn{exp}\bigl( \m \hot \mcal{T}^0_{\mrm{poly}}(C) \bigr)$
to the group of gauge transformations of $A$ \tup{(}as augmented
$R$-algebra\tup{)}. 
\item The formula
$\om \mapsto \{ -,- \}_{\om}$
determines a bijection from 
$\opn{MC}\bigl( \m \hot \mcal{T}_{\mrm{poly}}(C) \bigr)$
to the set of formal Poisson brackets on $A$. For such $\om$ we
denote by $A_{\om}$ the corresponding
Poisson algebra.
\item Let 
$\om, \om' \in \opn{MC}\bigl( \m \hot
\mcal{T}_{\mrm{poly}}(C) \bigr)$, and
let $\gamma \in \m \hot \mcal{T}^0_{\mrm{poly}}(C)$.
Then 
$\om' = \opn{exp}(\opn{af}(\gamma))(\om)$
if and only if
$\opn{exp}(\opn{ad}_A(\gamma)) : A_{\om} \to A_{\om'}$
is a gauge transformation of Poisson deformations.
\item For 
$\om \in \opn{MC}\bigl( \m \hot
\mcal{T}_{\mrm{poly}}(C) \bigr)$, 
one has equality of groups
\[ \opn{IG}(A_{\om}) = 
\opn{exp}\bigl( \m \hot \mcal{T}^{-1}_{\mrm{poly}}(C) 
\bigr)_{\om} . \]
\end{enumerate}
\end{prop}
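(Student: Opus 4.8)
The plan is to verify the four parts of Proposition~\ref{prop:4} by unwinding the definitions of the Deligne crossed groupoid $\opn{Del}(\mcal{T}_{\mrm{poly}}(C), R)$ and matching them against the crossed groupoid $\cat{PoisDef}(R, C)$, using repeatedly the structural facts about $A := R \hot C$ from Proposition~\ref{prop:12} (flatness, completeness, and the description of gauge transformations via Lemma~\ref{lem:dglie.111}). The Schouten-Nijenhuis differential on $\mcal{T}_{\mrm{poly}}(C)$ is zero, so $\d_{\om} = \opn{ad}(\om)$, which simplifies many of the formulas.

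For part (1), I would apply Lemma~\ref{lem:dglie.111}: every gauge transformation of the augmented $R$-module $A$ is uniquely $\exp(\ga)$ for $\ga \in \m \hot \opn{End}(C)$, and I must identify which $\ga$ give $R$-algebra automorphisms. A derivation of $C$ exponentiates to an algebra automorphism, and conversely an algebra automorphism lifting $\bsym{1}_C$ has logarithm a derivation; working modulo powers of $\m$ and passing to the limit, this matches $\ga \in \m \hot \mcal{T}^0_{\mrm{poly}}(C) = \m \hot \mcal{T}(C)$. The group law $\exp(\ga_1) \exp(\ga_2) = \exp(\opn{BCH}(\ga_1, \ga_2))$ transports correctly since $\opn{ad}_A$ is a Lie homomorphism. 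For part (2), I would note that a formal Poisson bracket on $A$ is an $R$-bilinear antisymmetric biderivation satisfying Jacobi and vanishing mod $\m$; biderivations of $A = R\hot C$ correspond (via completion) to elements of $\m \hot (\bwedge^2_C \mcal{T}(C)) = \m \hot \mcal{T}^1_{\mrm{poly}}(C)$ through the formula \tup{(\ref{eqn:extras.15})}, and the Jacobi identity for $\{-,-\}_{\om}$ translates precisely into the Maurer-Cartan equation $\smfrac{1}{2}[\om,\om] = 0$ for the Schouten bracket (the $\d(\om)$ term is absent). This last translation is the computational heart of part (2) and is classical (Kontsevich); I would cite it or reduce it to the nilpotent/finite case and pass to the limit.

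For part (3), given $\om, \om' \in \opn{MC}(\m \hot \mcal{T}_{\mrm{poly}}(C))$ and $\ga \in \m \hot \mcal{T}^0_{\mrm{poly}}(C)$, I want to show $\om' = \opn{exp}(\opn{af}(\ga))(\om)$ iff $\opn{exp}(\opn{ad}_A(\ga)) : A_{\om} \to A_{\om'}$ is a Poisson gauge transformation. Since $\d = 0$, the affine action is $\opn{af}(\ga)(\om) = -[\ga, \om]$, i.e.\ $\opn{Af}(\exp(\ga)) = \opn{exp}(-\opn{ad}(\ga))$ acting on $\m \hot \g^1$; this is the integrated adjoint action. The key compatibility is that the adjoint action of $\exp(\ga)$ on the bracket $\{-,-\}_{\om}$ (viewed as an element of $\m\hot\g^1$) intertwines with conjugation of the biderivation by the algebra automorphism $\opn{exp}(\opn{ad}_A(\ga))$ — this amounts to the equivariance of the map $\om \mapsto \{-,-\}_{\om}$ of part (2) with respect to the $\mcal{T}^0_{\mrm{poly}}$-actions on both sides, which one checks on the explicit formula \tup{(\ref{equ:301})}. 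For part (4), I would recall from Section~\ref{sec:deligne} that $N_{\om} = \exp(\m \hot \g^{-1})_{\om}$ with bracket $[\al_1,\al_2]_{\om} = [\d_{\om}(\al_1), \al_2] = [[\om,\al_1],\al_2]$, and on the deformation side $\opn{IG}(A_{\om}) = \exp(\m A_{\om})$ with Lie bracket the Poisson bracket $\{-,-\}_{\om}$. Since $\g^{-1} = \mcal{T}^{-1}_{\mrm{poly}}(C) = C$ and $\m \hot C = \m A$ as $R$-modules, I only need that the bracket $[-,-]_{\om}$ on $\m\hot C$ coincides with $\{-,-\}_{\om}$; this follows from the identity $\opn{ad}_C$ is the action of $\mcal{T}^0$ on $\mcal{T}^{-1} = C$, so $[[\om, c_1], c_2]$ computed via Schouten equals $\{c_1, c_2\}_{\om}$ — a direct check on \tup{(\ref{equ:301})}.

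The main obstacle will be part (2): establishing that the Jacobi identity for $\{-,-\}_{\om}$ is literally the Maurer-Cartan equation for the Schouten-Nijenhuis bracket, with the correct factor of $\smfrac{1}{2}$, and that every formal Poisson bracket arises this way (surjectivity — every $R$-bilinear Poisson biderivation on $R\hot C$ is induced from a polyderivation, which uses smoothness of $C$ so that biderivations of $C$ are exactly $\bwedge^2_C \mcal{T}(C)$). Everything else is bookkeeping: matching group laws via Baker-Campbell-Hausdorff, matching the two kinds of actions, and passing between $C$ and $R\hot C$ using flatness and completeness from Proposition~\ref{prop:12}. I expect to invoke results from \cite{Ye2} or \cite{Ye5} to keep part~(2) short, and to reduce the nilpotence-independent identities to the artinian case followed by an inverse-limit argument.
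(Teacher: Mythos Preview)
Your proposal is correct and follows essentially the same route as the paper. The paper is considerably more terse: for part~(1) it gives the same inductive construction of $\gamma$ modulo powers of $\m$ (peeling off a derivation at each step) that you allude to; for parts~(2) and~(3) it simply cites \cite[paragraph 4.6.2]{Ko1} and \cite[paragraph 3.5.3]{CKTB} rather than \cite{Ye2} or \cite{Ye5}; and for part~(4) it writes only ``by definition $\opn{IG}(A_{\om}) = \exp(\m A_{\om})$''. Your unpacking of part~(4)---checking that the Deligne bracket $[\al_1,\al_2]_{\om} = [[\om,\al_1],\al_2]$ on $\m \hot \mcal{T}^{-1}_{\mrm{poly}}(C) = \m \hot C$ literally coincides with the Poisson bracket $\{-,-\}_{\om}$---is exactly the content that the paper's one-liner presumes, and your observation that smoothness of $C$ is what makes antisymmetric biderivations equal to $\bwedge^2_C \mcal{T}(C)$ in part~(2) is a point the paper leaves implicit in its citations.
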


\begin{proof}
(1) By definition the operator $\opn{ad}_A(\gamma)$ is a pronilpotent
derivation of the $R$-algebra $A$. According to \cite[Section 2.3]{Hu}
the operator $g := \opn{exp}(\opn{ad}_A(\gamma))$ is an $R$-algebra
automorphism of $A$. Since 
$\opn{ad}_C : \mcal{T}(C) \to \opn{End}(C)$
is an injective Lie algebra homomorphism, it follows that 
\[ \opn{exp}(\opn{ad}_A(-)) : \exp(\m \hot \mcal{T}(C)) \to 
\exp(\m \hot \opn{End}(C)) \]
is an injective group homomorphism. 

Now suppose $g$ is a gauge transformation of $A$ as augmented $R$-algebra. 
Lemma \ref{lem:dglie.111} says we can view $g$ as an element of 
$R \hot \opn{End}(C)$. We
will produce a sequence $\gamma_i \in \m \hot \mcal{T}(C)$
such that 
$g \equiv \opn{exp}(\opn{ad}_A(\gamma_i))$ modulo $\m^{i+1}$.
Then for $\gamma := \opn{lim}_{i \to  \infty} \gamma_i$ 
we will have 
$g = \opn{exp}(\opn{ad}_A(\gamma))$.
Here is the construction. We start with $\gamma_0 := 0$ of course.
Next assume that we have $\gamma_i$. There is a unique element
$\delta_{i+1} \in (\m^{i+1} / \m^{i+2}) \ot \opn{End}(C)$
such that 
$g \circ \opn{exp}(\opn{ad}_A(\gamma_i))^{-1} = \bsym{1} +
\delta_{i+1}$
as automorphisms of the $R_{i+1}$-algebra
$A_{i+1} := R_{i+1} \ot C$. 
The usual calculation shows that $\delta_{i+1}$ is a derivation, i.e.\ 
$\delta_{i+1} \in (\m^{i+1} / \m^{i+2}) \ot \mcal{T}(C)$.
Choose some lifting 
$\til{\delta}_{i+1} \in \m^{i+1}  \hot \mcal{T}(C)$
of $\delta_{i+1}$, and define
$\gamma_{i+1} := \gamma_i + \til{\delta}_{i+1}$.

\medskip \noindent
(2, 3) See \cite[paragraph 4.6.2]{Ko1} or 
\cite[paragraph 3.5.3]{CKTB}.

\medskip \noindent
(4) By definition 
$\opn{IG}(A_{\om}) = \exp(\m A_{\om})$.
\end{proof}

The associative case is much more difficult. 
When dealing with associative deformations we
view $A := R \hot C$ as an $R$-module.
The augmentation $A \to C$ is viewed as a homomorphism of
$R$-modules, and there is a distinguished element
$1_A := 1_R \otimes 1_C \in A$. 

\begin{dfn}
Consider the augmented $R$-module
$A := R \hot C$, with distinguished element $1_A$.
\begin{enumerate}
\item A {\em star product} on $A$ is an $R$-bilinear function
$\star : A \times A \to A$
that makes $A$ into an associative $R$-algebra, with unit 
$1_A$, such that 
$c_1 \star c_2 \equiv c_1 c_2 \ \opn{mod} \m$
for $c_1, c_2 \in C$.
\item A {\em gauge transformation} of $A$ (as $R$-module) is an $R$-module
automorphism that commutes with the augmentation to $C$ and fixes the element
$1_A$.
\end{enumerate}
\end{dfn}

Given a star product $\star$ on $A$, we have an
associative $R$-deformation of $C$.
If we choose a filtered  $\K$-basis
$\{ r_j \}_{j \geq 1}$ of $\m$, then we can express $\star$ as
a power series
\[ c_1 \star c_2 = c_1 c_2 + 
\sum_{j \geq 1} r_j \, \om_j(c_2, c_2) 
\in A , \]
where 
$\om_j \in \opn{Hom}(C \ot C, C)$.

Star products are controlled by a DG Lie algebra too. It is the
{\em shifted Hochschild cochain complex}
\begin{equation}
\mcal{C}_{\mrm{shc}}(C) = \boplus_{p \geq -1}
\mcal{C}_{\mrm{shc}}^p(C) ,
\end{equation}
where
\begin{equation}
\mcal{C}_{\mrm{shc}}^p(C) := 
\opn{Hom}( \underset{p+1}{\underbrace{C \ot \cdots \ot C}}, C)
\end{equation}
for $p \geq 0$, and 
$\mcal{C}_{\mrm{shc}}^{-1}(C) := C$.
The differential is the shift of the Hochschild differential, and the
Lie bracket is the Gerstenhaber bracket. 
(In our earlier paper \cite{Ye2} we used the notation 
$\mcal{C}_{\mrm{dual}}(C)[1]$ for this DG Lie algebra.) 
Inside $\mcal{C}_{\mrm{shc}}(C)$ there is a sub DG Lie algebra 
$\mcal{C}_{\mrm{shc}}^{\mrm{nor}}(C)$, consisting of the {\em
normalized
cochains}. By definition a cochain 
$\phi \in \mcal{C}_{\mrm{shc}}^p(C)$ is normalized if either $p = -1$,
or $p \geq 0$ and 
$\phi(c_1 \otimes \cdots \otimes c_{p+1}) = 0$
whenever $c_i = 1$ for some index $i$. 

Given $\om \in \m \hot 
\mcal{C}_{\mrm{shc}}^{\mrm{nor}, 1}(C)$ we
denote by
$\star_{\om}$ the $R$-bilinear function on the $R$-module
$A := R \hot C$ with formula
\begin{equation} \label{eqn:dglie.25}
c_1 \star_{\om} c_2 := c_1 c_2 + \om(c_1, c_2)
\end{equation}
for $c_1, c_2 \in C$. 
And for $\gamma \in \m \hot 
\mcal{C}_{\mrm{shc}}^{\mrm{nor}, 0}(C)$
we denote by $\opn{ad}_A$ the $R$-linear function on $A$
such that
$\opn{ad}_A(c) := [\gamma, c] = \gamma(c)$
for $c \in C$.

According to Proposition \ref{prop:12}, any associative
$R$-deformation $A$ of $C$ is isomorphic,
as augmented $R$-module with distinguished element $1_A$, to $R \hot C$
with its distinguished element $1_R \ot 1_C$.
Like Proposition \ref{prop:4}, we have:

\begin{prop} \label{prop:5}
Consider the augmented $R$-module $A := R \hot C$ with
distinguished element $1_A := 1_R \ot 1_C$.
\begin{enumerate}
\item The formula
$\opn{exp}(\gamma) \mapsto \opn{exp}(\opn{ad}_A(\gamma))$
determines a group isomorphism from 
$\opn{exp}\bigl( \m \hot 
\mcal{C}_{\mrm{shc}}^{\mrm{nor}, 0}(C) \bigr)$
to the group of gauge transformations of the augmented $R$-module $A$
that preserve $1_A$. 
\item The formula
$\om \mapsto \star_{\om}$
determines a bijection from 
$\opn{MC}\bigl( \m \hot 
\mcal{C}_{\mrm{shc}}^{\mrm{nor}, 1}(C) \bigr)$
to the set of star products on $A$.
For such $\om$ we denote by $A_{\om}$ the resulting associative
$R$-algebra.
\item Let 
$\om, \om' \in \opn{MC}\bigl( \m \hot
\mcal{C}_{\mrm{shc}}^{\mrm{nor}}(C) \bigr)$, and 
let 
$\gamma \in \m \hot 
\mcal{C}_{\mrm{shc}}^{\mrm{nor}, 1}(C)$.
Then 
$\om' = \opn{exp}(\opn{af}(\gamma))(\om)$
if and only if
$\opn{exp}(\opn{ad}_A( \gamma)) : A_{\om} \to A_{\om'}$
is a gauge transformation of associative $R$-deformations of $C$.
\item For 
$\om \in \opn{MC}\bigl( \m \hot
\mcal{C}_{\mrm{shc}}^{\mrm{nor}, 1}(C) \bigr)$, 
there is a canonical isomorphism of groups
\[ \opn{IG}(A_{\om}) \cong
\opn{exp}\bigl( \m \hot 
\mcal{C}_{\mrm{shc}}^{\mrm{nor}, -1}(C) 
\bigr)_{\om} . \]
\end{enumerate}
\end{prop}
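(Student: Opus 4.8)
The plan is to prove parts (1)--(4) in close parallel with the proof of Proposition \ref{prop:4}, the one genuinely new feature being that the normalization conditions must be carried along; parts (2) and (3) are, modulo this bookkeeping, the classical Gerstenhaber correspondence between associative deformations and the Hochschild DG Lie algebra, and for those I would simply quote \cite[paragraph 4.6.2]{Ko1} and \cite[paragraph 3.5.3]{CKTB}, exactly as in Proposition \ref{prop:4}. For part (1) I would start from Lemma \ref{lem:dglie.111}: every gauge transformation of the augmented $R$-module $A = R \hot C$ is uniquely $\exp(\psi)$ with $\psi \in \m \hot \opn{End}(C)$, and under this bijection the operator $\exp(\psi)$ on $A$ is exactly $\exp(\opn{ad}_A(\psi))$, where $\opn{ad}_A(\psi)$ denotes the $R$-linear extension of $\psi$ (so $\opn{ad}_A(\psi)|_C = \psi$, and this agrees with $\opn{ad}_A$ as defined above when $\psi$ is normalized). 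The only point to verify is that $g = \exp(\psi)$ fixes $1_A = 1_R \ot 1_C$ if and only if $\psi(1_C) = 0$, i.e.\ $\psi$ is a normalized $0$-cochain: indeed $g(1_A) = 1_A + \psi(1_C) + \smfrac{1}{2}\psi(\psi(1_C)) + \cdots$, whose lowest-order term beyond $1_A$ is the lowest-order part of $\psi(1_C)$. Since the normalized $0$-cochains form a Lie subalgebra of $\opn{End}(C)$ for the Gerstenhaber bracket (the commutator $f \circ g - g \circ f$ in this degree), and $\psi$ is recovered from $\opn{ad}_A(\psi)$ by restriction to $C$, the map $\exp(\opn{ad}_A(-))$ is a group isomorphism from $\exp(\m \hot \mcal{C}^{\mrm{nor},0}_{\mrm{shc}}(C))$ onto the group of $1_A$-fixing gauge transformations of the $R$-module $A$.

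For part (2), the key preliminary step is to identify $R$-bilinear operations $A \times A \to A$ with $\K$-linear maps $C \ot C \to A$: any $R$-bilinear operation on $A = R \hot C$ is separately, hence (as $A$ is $\m$-adically complete and $R$ noetherian) jointly, $\m$-adically continuous, so it factors through $A \hatotimes{R} A \cong R \hot (C \ot C)$, and a continuous $R$-linear map $R \hot (C \ot C) \to A$ is the same thing as an arbitrary $\K$-linear map $C \ot C \to A$. A star product therefore corresponds to a map $C \ot C \to A$ reducing mod $\m$ to the multiplication $\mu_0$ of $C$, i.e.\ to $\mu_0 + \om$ with $\om \in \m \hot \mcal{C}^1_{\mrm{shc}}(C)$. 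Under this dictionary, unitality ($1_A \star a = a \star 1_A = a$, which by $R$-bilinearity need only be tested on $C$) is equivalent to $\om(1_C, -) = \om(-, 1_C) = 0$, i.e.\ to $\om$ being normalized, and associativity of $\mu_0 + \om$ is equivalent, by the standard Gerstenhaber computation ($[\mu_0 + \om, \mu_0 + \om] = 0$ together with $\d = [\mu_0, -]$), to the Maurer--Cartan equation $\d(\om) + \smfrac{1}{2}[\om, \om] = 0$; this yields the bijection $\om \mapsto \star_\om$. For part (3), with $\ga \in \m \hot \mcal{C}^{\mrm{nor},0}_{\mrm{shc}}(C)$ (this degree being forced by $\opn{af}(\ga)(\om) = \d(\ga) - [\ga, \om] \in \g^1$), the map $\Phi := \exp(\opn{ad}_A(\ga))$ is by part (1) a gauge transformation of the $R$-module $A$ fixing $1_A$, and $\Phi$ is an isomorphism $A_\om \to A_{\om'}$ of $R$-deformations precisely when $\star_{\om'} = \Phi \circ \star_\om \circ (\Phi^{-1} \times \Phi^{-1})$; conjugating the $2$-cochain $\mu_0 + \om$ by $\exp(\opn{ad}_A(\ga))$ produces $\mu_0 + \exp(\opn{af}(\ga))(\om)$ — again the classical computation — and since the normalized subcomplex $\mcal{C}^{\mrm{nor}}_{\mrm{shc}}(C)$ is closed under $\d$ and $[-,-]$, the result $\exp(\opn{af}(\ga))(\om)$ is automatically normalized, which matches part (2).

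For part (4): by definition $\opn{IG}(A_\om) = \exp(\m A_\om)$, where the Lie bracket on $\m A_\om \cong \m \hot C = \m \hot \mcal{C}^{\mrm{nor},-1}_{\mrm{shc}}(C)$ is the $\star_\om$-commutator. I would check that for $\al_1, \al_2 \in \m \hot C$ this commutator coincides with the twisted bracket $[\al_1, \al_2]_\om = [\d_{\om}(\al_1), \al_2]$ of $(\m \hot \g^{-1})_\om$: by $R$-bilinearity it suffices to test on $C$, where $\d$ vanishes on $\mcal{C}^{-1}_{\mrm{shc}}$, so $\d_{\om}(\al_1) = [\om, \al_1]$ is (a direct Gerstenhaber-bracket computation) the endomorphism $c \mapsto \om(\al_1, c) - \om(c, \al_1)$, and hence $[\d_{\om}(\al_1), \al_2] = \om(\al_1, \al_2) - \om(\al_2, \al_1) = \al_1 \star_\om \al_2 - \al_2 \star_\om \al_1$, the $\al_1 \al_2$ terms cancelling by commutativity of $C$; taking exponentials gives the canonical group isomorphism. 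The hard part — really the only non-routine part — is the identification used throughout parts (1) and (2) of $R$-(bi)linear operations on the $\m$-adically completed module $R \hot C$ with cochains on $C$, i.e.\ making the passage between operations on $A$ and the DG Lie algebra $\mcal{C}^{\mrm{nor}}_{\mrm{shc}}(C)$ rigorous in the complete setting, and in particular verifying that unitality on all of $A$ (not merely on $C$) is captured by the normalization condition; everything else is a direct computation or a citation of the classical nilpotent-case correspondence.
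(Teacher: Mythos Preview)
Your proposal is correct and follows essentially the same approach as the paper: part (1) via Lemma \ref{lem:dglie.111} plus the observation that fixing $1_A$ is equivalent to normalization of $\gamma$; parts (2)--(3) via citation of the classical Gerstenhaber/Kontsevich correspondence; part (4) directly from the definition $\opn{IG}(A_\om) = \exp(\m A_\om)$. Your write-up is considerably more detailed than the paper's (which disposes of (2)--(3) by citing \cite[paragraphs 3.4.2 and 4.6.2]{Ko1} and \cite[Section 3.3]{CKTB}, and of (4) in one line), and in particular you make explicit the identification of $R$-bilinear operations on the complete module $R \hot C$ with cochains on $C$ --- a point the paper leaves to the references.
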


\begin{proof}
(1) Combine Lemma \ref{lem:dglie.111} with the observation that the
automorphism \lb $\opn{exp}(\opn{ad}_A(\gamma))$, for 
$\ga \in \m \hot \mcal{C}_{\mrm{shc}}^{0}(C) = \m \hot \opn{End}(C)$,
fixes the element $1_A$ if and only if $\gamma$ is normalized.

\medskip \noindent
(2, 3) See \cite[paragraphs 3.4.2 and 4.6.2]{Ko1} or 
\cite[Section 3.3]{CKTB}. Cf.\ also \cite[Propositions 3.20 and
3.21]{Ye1}.

\medskip \noindent
(4) 
By definition 
$\opn{IG}(A_{\om}) = \exp(\m A_{\om})$.
\end{proof}

%\cleardoublepage
\section{Polydifferential Operators} \label{sec:ploy-diff}

We continue with Setup \ref{setup:3}. In this section we prove that
associative deformations are actually controlled by a sub DG
Lie algebra $\mcal{D}^{\mrm{nor}}_{\mrm{poly}}(C)$ of
$\mcal{C}_{\mrm{shc}}^{\mrm{nor}}(C)$, 
which has better behavior. 

Take a Hochschild cochain 
$\phi \in \mcal{C}_{\mrm{shc}}^{p}(C)$
for some $p \geq 0$. The function $\phi$ is called a {\em 
polydifferential operator} if there is a number $m \in \N$, such that for every 
$i$ and every $c_1, \ldots, c_{p+1} \in C$, the function
\[ c \mapsto \phi(c_1, \ldots, c_{i-1}, c, c_{i+1}, \ldots, c_{p+1}) \]
is a differential operator $C \to C$ of order $\leq m$. We denote by
$\mcal{D}^{p}_{\mrm{poly}}(C)$ the set of these polydifferential
operators. And we let
$\mcal{D}^{-1}_{\mrm{poly}}(C) := C$.
Then $\mcal{D}^{}_{\mrm{poly}}(C)$ is a sub DG Lie algebra of the shifted
Hochschild cochain complex 
$\mcal{C}_{\mrm{shc}}(C)$. We define a yet smaller DG Lie algebra
\[ \mcal{D}^{\mrm{nor}}_{\mrm{poly}}(C) :=
\mcal{D}^{}_{\mrm{poly}}(C) \cap 
\mcal{C}_{\mrm{shc}}^{\mrm{nor}}(C) , \]
whose elements are the {\em normalized polydifferential operators}.

\begin{dfn} \label{dfn:16}
Consider the augmented $R$-module
$A := R \hot C$,
with distinguished element $1_A$. Recall the bijections of Proposition
\ref{prop:5}(1-2).
\begin{enumerate}
\item A {\em formal polydifferential operator} on $A$ is an element 
$\phi \in \m \hot \mcal{D}^{p}_{\mrm{poly}}(C)$ for some $p \geq 0$.

\item A gauge transformation $g : A \to A$ is called a 
{\em differential gauge transformation} if 
$\gamma := \opn{log}(g)$ is a formal differential operator, i.e.\ 
$\ga \in \m \hot \mcal{D}^{\mrm{nor}, 0}_{\mrm{poly}}(C)$.

\item A star product $\star$ on $A$ is called a
{\em differential star product} if the corresponding MC element
$\om$ is a formal bidifferential operator, i.e.\
$\om \in \m \hot \mcal{D}^{\mrm{nor}, 1}_{\mrm{poly}}(C)$.
\end{enumerate}
\end{dfn}

\begin{thm} \label{thm:4}
Assume $R$ and $C$ are as in Setup \tup{\ref{setup:3}}. Then
any star product on the $R$-module $A := R \hot C$
is gauge equivalent to a differential star product. 
Namely, given a star product $\star$ on $A$, there exists a 
gauge transformation $g : A \to A$, and a 
differential star product $\star'$, such that
\begin{equation} \label{eqn:10}
g(a_1 \star a_2) = g(a_1) \star' g(a_2)  
\end{equation}
for any $a_1, a_2 \in A$.
\end{thm}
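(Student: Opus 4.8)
The plan is to rephrase the statement as a surjectivity assertion about gauge equivalence classes of Maurer-Cartan elements, and then to deduce it from the fact, due to \cite{Ye2}, that the inclusion of DG Lie algebras $\iota : \mcal{D}^{\mrm{nor}}_{\mrm{poly}}(C) \inj \mcal{C}_{\mrm{shc}}^{\mrm{nor}}(C)$ is a quasi-isomorphism.

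First I would translate the problem via Proposition \ref{prop:5}. By part (2) the given star product is $\star = \star_{\om}$ for a unique $\om \in \opn{MC}\bigl( \m \hot \mcal{C}_{\mrm{shc}}^{\mrm{nor}}(C) \bigr)$, and by Definition \ref{dfn:16}(3) the differential star products are exactly the $\star_{\om'}$ with $\om' \in \m \hot \mcal{D}^{\mrm{nor}, 1}_{\mrm{poly}}(C)$; since $\mcal{D}^{\mrm{nor}}_{\mrm{poly}}(C)$ is a sub DG Lie algebra, such $\om'$ are precisely the elements of $\opn{MC}\bigl( \m \hot \mcal{D}^{\mrm{nor}}_{\mrm{poly}}(C) \bigr)$. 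By Proposition \ref{prop:5}(1) every gauge transformation $g$ of the augmented $R$-module $A$ that fixes $1_A$ has the form $g = \opn{exp}(\opn{ad}_A(\ga))$ for a unique $\ga \in \m \hot \mcal{C}_{\mrm{shc}}^{\mrm{nor}, 0}(C)$, and by Proposition \ref{prop:5}(3), for such a $g$ and for $\star = \star_{\om}$, $\star' = \star_{\om'}$, equation \tup{(\ref{eqn:10})} holds if and only if $\om' = \opn{exp}(\opn{af}(\ga))(\om)$. Hence Theorem \ref{thm:4} is equivalent to the assertion that every element of $\opn{MC}\bigl( \m \hot \mcal{C}_{\mrm{shc}}^{\mrm{nor}}(C) \bigr)$ is, within $\m \hot \mcal{C}_{\mrm{shc}}^{\mrm{nor}}(C)$, gauge equivalent to an element of $\opn{MC}\bigl( \m \hot \mcal{D}^{\mrm{nor}}_{\mrm{poly}}(C) \bigr)$.

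The crucial input is that $\iota$ is a quasi-isomorphism; this is where smoothness of $C$ and $\opn{char} \K = 0$ are used, and it follows from the results of \cite{Ye2} on the structure of $\mcal{D}^{\mrm{nor}}_{\mrm{poly}}(C)$ (both complexes compute the Hochschild cohomology of $C$, and $\iota$ induces the identity on it; concretely one combines the algebraic Hochschild-Kostant-Rosenberg theorem with the passage to normalized cochains). Granting this, $\bsym{1}_{\m} \hot \iota$ is again a quasi-isomorphism, so by the map \tup{(\ref{eqn:29})}, that is \cite[Theorem 4.2]{Ye5}, the induced map
\[ \ol{\opn{MC}}(\bsym{1}_{\m} \hot \iota) : \ol{\opn{MC}}\bigl( \m \hot \mcal{D}^{\mrm{nor}}_{\mrm{poly}}(C) \bigr) \to \ol{\opn{MC}}\bigl( \m \hot \mcal{C}_{\mrm{shc}}^{\mrm{nor}}(C) \bigr) \]
is a bijection; in particular it is surjective, which is exactly the reformulation of Theorem \ref{thm:4} obtained above.

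It remains to run the translation backwards. Surjectivity provides $\om' \in \opn{MC}\bigl( \m \hot \mcal{D}^{\mrm{nor}}_{\mrm{poly}}(C) \bigr)$ and an element $h = \opn{exp}(\delta)$ of the gauge group $\opn{exp}\bigl( \m \hot \mcal{C}_{\mrm{shc}}^{\mrm{nor}, 0}(C) \bigr)$ with $\opn{exp}(\opn{af}(\delta))(\om') = \om$; writing $h^{-1} = \opn{exp}(\ga)$ in this pronilpotent group, we get $\opn{exp}(\opn{af}(\ga))(\om) = \om'$, and then $g := \opn{exp}(\opn{ad}_A(\ga))$ and $\star' := \star_{\om'}$ satisfy \tup{(\ref{eqn:10})} by Proposition \ref{prop:5}(3), with $\star'$ differential. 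I expect the only non-formal step to be the quasi-isomorphism $\iota$ imported from \cite{Ye2}; everything else is bookkeeping with Proposition \ref{prop:5} and \tup{(\ref{eqn:29})}. One could also avoid \tup{(\ref{eqn:29})} and argue by induction on the $\m$-adic filtration: after fixing a filtered $\K$-basis $\{ r_j \}_{j \geq 1}$ of $\m$, one corrects $\om$ by a gauge transformation one filtration degree at a time, the corrections being controlled by $\iota$; this essentially re-proves the needed case of \tup{(\ref{eqn:29})} and still rests on \cite{Ye2}.
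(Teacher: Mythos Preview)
Your proposal is correct and follows essentially the same route as the paper's proof: translate via Proposition \ref{prop:5} to a statement about MC elements, invoke \cite[Corollary 4.12]{Ye2} for the quasi-isomorphism $\mcal{D}^{\mrm{nor}}_{\mrm{poly}}(C) \hookrightarrow \mcal{C}_{\mrm{shc}}^{\mrm{nor}}(C)$, and apply \cite[Theorem 4.2]{Ye5} (i.e.\ \tup{(\ref{eqn:29})}) to conclude. Your write-up is more explicit about the translation back and forth, but the argument is the same.
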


\begin{proof}
This is a mild generalization of \cite[Proposition 8.1]{Ye1},
which refers to $R = \K[[\hbar]]$.
According to \cite[Corollary 4.12]{Ye2}, the inclusion
$\mcal{D}^{\mrm{nor}}_{\mrm{poly}}(C) \to 
\mcal{C}_{\mrm{shc}}^{\mrm{nor}}(C)$
is a quasi-isomorphism. Therefore, by \cite[Theorem 4.2]{Ye5}, we get a
bijection
\[ \ol{\mrm{MC}} \bigl( \m \hot 
\mcal{D}^{\mrm{nor}}_{\mrm{poly}}(C) \bigr) \to 
\ol{\mrm{MC}} \bigl( \m \hot 
\mcal{C}_{\mrm{shc}}^{\mrm{nor}}(C) \bigr) . \]
Let 
$\om \in \mrm{MC} \bigl( \m \hot 
\mcal{C}_{\mrm{shc}}^{\mrm{nor}}(C) \bigr)$
be the element representing $\star$; see Proposition  \ref{prop:5}(2).
Next let 
$\om' \in \mrm{MC} \bigl( \m \hot 
\mcal{D}^{\mrm{nor}}_{\mrm{poly}}(C) \bigr)$
be an element that's gauge equivalent to $\om$. By 
Proposition \ref{prop:5}(3) there is a gauge transformation
$g := \opn{exp}(\opn{ad}_A( \gamma))$ which satisfies equation
(\ref{eqn:10}). 
\end{proof}

\begin{rem}
It should be noted that the proof of \cite[Corollary 4.12]{Ye2}
relies on the fact that $C$ is a smooth $\K$-algebra and 
$\opn{char} \K = 0$. The result is most likely false otherwise.
\end{rem}

We learned the next result from P. Etingof. It is very similar
to \cite[Proposition 2.2.3]{KS}. 

\begin{thm}  \label{thm:5}
Assume $R$ and $C$ are as in Setup \tup{\ref{setup:3}}. 
Let $\star$ and $\star'$ be two star products on the augmented $R$-module 
$A := C \hatotimes{} R$, and let $g$ be a gauge transformation of $A$
satisfying \tup{(\ref{eqn:10})}. Assume that $\star$ is a differential star
product. The following conditions are equivalent\tup{:}
\begin{enumerate}
\rmitem{i} The star product $\star'$ is also differential.

\rmitem{ii} The gauge transformation  $g$ is differential.
\end{enumerate}
\end{thm}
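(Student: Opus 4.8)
The plan is to reduce the statement to a fact about the DG Lie algebra $\mcal{D}^{\mrm{nor}}_{\mrm{poly}}(C)$ inside $\mcal{C}_{\mrm{shc}}^{\mrm{nor}}(C)$: namely, that a gauge transformation carrying one element of $\mrm{MC}(\m \hot \mcal{D}^{\mrm{nor}}_{\mrm{poly}}(C))$ to another element of that same subset must itself come from $\m \hot \mcal{D}^{\mrm{nor}, 0}_{\mrm{poly}}(C)$. The implication (ii) $\Rightarrow$ (i) is the easy direction: if $\star = \star_{\om}$ with $\om \in \m \hot \mcal{D}^{\mrm{nor}, 1}_{\mrm{poly}}(C)$ and $g = \exp(\opn{ad}_A(\ga))$ with $\ga \in \m \hot \mcal{D}^{\mrm{nor}, 0}_{\mrm{poly}}(C)$, then by Proposition~\ref{prop:5}(3) the product $\star'$ equals $\star_{\om'}$ where $\om' = \exp(\opn{af}(\ga))(\om)$; and $\opn{af}(\ga)(\om) = \d(\ga) - [\ga, \om]$ stays inside $\m \hot \mcal{D}^{\mrm{nor}}_{\mrm{poly}}(C)$ because $\mcal{D}^{\mrm{nor}}_{\mrm{poly}}(C)$ is a sub DG Lie algebra, so $\om'$ is a formal bidifferential operator and $\star'$ is differential.

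For (i) $\Rightarrow$ (ii), I would argue by successive approximation modulo powers of $\m$, in the spirit of the proof of Proposition~\ref{prop:4}(1). Write $\om, \om' \in \mrm{MC}(\m \hot \mcal{D}^{\mrm{nor}}_{\mrm{poly}}(C))$ for the MC elements of $\star, \star'$, and let $g = \exp(\opn{ad}_A(\ga))$ with $\ga \in \m \hot \mcal{C}_{\mrm{shc}}^{\mrm{nor}, 0}(C)$ (Proposition~\ref{prop:5}(1)); we know $\om' = \exp(\opn{af}(\ga))(\om)$. I build elements $\ga_i \in \m \hot \mcal{D}^{\mrm{nor}, 0}_{\mrm{poly}}(C)$ with $\ga \equiv \ga_i \pmod{\m^{i+1}}$, starting from $\ga_0 = 0$. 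Suppose $\ga_i$ has been constructed. Replacing $\ga$ by $\opn{BCH}(-\ga_i, \ga)$, i.e. composing $g$ with $\exp(\opn{ad}_A(\ga_i))^{-1}$ (which is legitimate since $\ga_i$ is polydifferential, and which by the easy direction changes $\star$ by a differential gauge transformation), we reduce to the case where $\ga \equiv 0 \pmod{\m^{i+1}}$; write $\ga \equiv \delta_{i+1} \pmod{\m^{i+2}}$ with $\delta_{i+1} \in (\m^{i+1}/\m^{i+2}) \ot \mcal{C}_{\mrm{shc}}^{\mrm{nor}, 0}(C)$. Working modulo $\m^{i+2}$, the identity $\om' = \d(\ga) - [\ga, \om] + \cdots$ collapses, because $\om, \om' \equiv 0 \pmod{\m}$ and $\ga \equiv 0 \pmod{\m^{i+1}}$, to $\om' - \om \equiv \d(\delta_{i+1}) \pmod{\m^{i+2}}$, where $\d$ is the Hochschild differential. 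Since $\om$ and $\om'$ are both polydifferential, $\d(\delta_{i+1})$ is a (normalized) polydifferential operator. The key point is then a cohomological one: a normalized Hochschild $0$-cochain whose Hochschild coboundary is polydifferential must itself be polydifferential. I would extract this from the structure of $\mcal{D}^{\mrm{nor}}_{\mrm{poly}}(C) \hookrightarrow \mcal{C}_{\mrm{shc}}^{\mrm{nor}}(C)$ established in \cite{Ye2}: because this inclusion is a quasi-isomorphism (\cite[Corollary~4.12]{Ye2}), the quotient complex $\mcal{C}_{\mrm{shc}}^{\mrm{nor}}(C) / \mcal{D}^{\mrm{nor}}_{\mrm{poly}}(C)$ is acyclic, so a class $[\delta_{i+1}]$ in degree $0$ that maps to $0$ in degree $1$ of the quotient already lives in the subcomplex up to a coboundary from degree $-1$; but in degree $-1$ both complexes are $C$, so the correction is automatically polydifferential. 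Hence $\delta_{i+1}$ may be adjusted, without changing it modulo $\m^{i+2}$ as an element of the quotient, to lie in $(\m^{i+1}/\m^{i+2}) \ot \mcal{D}^{\mrm{nor}, 0}_{\mrm{poly}}(C)$; lift it to $\til{\delta}_{i+1} \in \m^{i+1} \hot \mcal{D}^{\mrm{nor}, 0}_{\mrm{poly}}(C)$ and set $\ga_{i+1} := \ga_i + \til{\delta}_{i+1}$. Passing to the limit, $\ga = \lim_i \ga_i \in \m \hot \mcal{D}^{\mrm{nor}, 0}_{\mrm{poly}}(C)$, so $g$ is differential.

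The main obstacle is the cohomological step just sketched: making precise the claim that acyclicity of the quotient complex lets one correct $\delta_{i+1}$ into the polydifferential subcomplex while controlling exactly what happens modulo $\m^{i+2}$. One must be careful that the correction coming from degree $-1$ does not disturb the congruence $\ga_{i+1} \equiv \ga \pmod{\m^{i+2}}$ — but since that correction is a Hochschild coboundary of a degree $-1$ cochain and we only need $g$ to be differential (not a specific $\ga$), replacing $\ga_{i+1}$ by a BCH-adjusted version absorbs it. I expect the induction to go through cleanly once the degree-$0$/degree-$(-1)$ bookkeeping for the acyclic quotient complex is laid out, using precisely the same "nilpotent approximation" template as in Proposition~\ref{prop:4}(1) and the quasi-isomorphism input from \cite{Ye2}.
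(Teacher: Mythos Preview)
Your argument is correct and takes a genuinely different route from the paper's. The paper works directly with a filtered $\K$-basis $\{r_i\}$ of $R$: writing $g = \sum_i r_i \otimes \gamma_i$ and expanding both sides of $g(c \star d) = g(c) \star' g(d)$, it isolates the relation $\gamma_i(cd) - \gamma_i(c)\, d = \phi_i(c,d)$, where $\phi_i$ is bidifferential by induction on $i$; then Grothendieck's characterization of differential operators (an endomorphism whose commutator with every $d \in C$ is a differential operator of uniformly bounded order is itself differential) forces each $\gamma_i$ to be differential, and finally $\log(g)$ lands in $\m \hot \mcal{D}^{\mrm{nor},0}_{\mrm{poly}}(C)$ by a power-series argument. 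Your approach instead runs the induction on $\gamma = \log(g)$ and uses the quasi-isomorphism of \cite[Corollary 4.12]{Ye2} more structurally: the quotient complex $Q := \mcal{C}_{\mrm{shc}}^{\mrm{nor}}(C) / \mcal{D}^{\mrm{nor}}_{\mrm{poly}}(C)$ is acyclic with $Q^{-1} = 0$, so any degree-$0$ cocycle in $Q$ is zero, and hence your leading term $\delta_{i+1}$ is polydifferential. The paper's method is more elementary and self-contained (it does not invoke acyclicity of the quotient), while yours makes transparent that the result is a formal consequence of the quasi-isomorphism together with the equality $\mcal{D}^{\mrm{nor},-1}_{\mrm{poly}}(C) = \mcal{C}_{\mrm{shc}}^{\mrm{nor},-1}(C) = C$.

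Your exposition of the cohomological step is more tentative than necessary. You say $\delta_{i+1}$ lies in the subcomplex ``up to a coboundary from degree $-1$'' and then worry about adjusting it, but since $Q^{-1} = 0$ there is nothing to adjust: the cocycle $\bar{\delta}_{i+1} \in Q^0$ is literally zero, so $\delta_{i+1} \in (\m^{i+1}/\m^{i+2}) \otimes \mcal{D}^{\mrm{nor},0}_{\mrm{poly}}(C)$ on the nose and the induction is clean. (A minor slip: composing $g$ on the left with $\exp(\opn{ad}_A(\gamma_i))^{-1}$ changes $\star'$, not $\star$; either way both endpoints stay differential, so the argument is unaffected.)
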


\begin{proof}
The implication (ii) $\Rightarrow$ (i) is easy: the subgroup 
$\exp \bigl( \m \hot \mcal{D}^{\mrm{nor}, 0}_{\mrm{poly}}(C) \bigr)$
of 
$\exp\bigl( \m \hot \mcal{C}_{\mrm{shc}}^{\mrm{nor}}(C) \bigr)$
acts on the subset 
$\mrm{MC} \bigl( \m \hot \mcal{D}^{\mrm{nor}}_{\mrm{poly}}(C) \bigr)$
of 
$\mrm{MC} \bigl( \m \hot \mcal{C}_{\mrm{shc}}^{\mrm{nor}}(C) \bigr)$.

We now consider (i) $\Rightarrow$ (ii); so $\star'$ is differential.
Let us choose a filtered $\K$-basis
$\{ r_i \}_{i \geq 0}$ of $R$, such that $r_0 = 1$, and 
$\opn{ord}_{\m}(r_i) \leq \opn{ord}_{\m}(r_{i+1})$. 
Denote by 
$\{ \mu_{i,j; k} \}_{i,j,k \geq 0}$ 
the multiplication constants of the basis
$\{ r_i \}_{i \geq 0}$, i.e.\ the collection of
elements of $\K$ such that
$r_i r_j = \sum_k \mu_{i,j; k}\, r_k$.
Note that 
$\mu_{0,j; j} = \mu_{i,0; i} = 1$, and
$\mu_{i,j; k} = 0$ if $i+j > k$.

The gauge transformation $g$ has an expansion
$g = \sum_{i \geq 0} r_i \otimes \gamma_i$,
with $\gamma_0 = \bsym{1}_C$ and
$\gamma_i \in  \mcal{C}_{\mrm{shc}}^{\mrm{nor}, 0}(C) \subset
\opn{End}(C)$ for $i \geq 1$.
We will begin by showing that 
$\gamma_i$ are differential operators. 
This calculation is by induction on $i$, and it is almost identical to
the proof of \cite[Proposition 4.3]{KS}. 

Let us denote by 
$\om_i, \om'_i \in \mcal{D}^{1}_{\mrm{poly}}(C)$
the bidifferential operators such that 
\[ c \star d = \sum_{i \geq 0} r_i \otimes \om_i(c,d) \]
and
\[ c \star' d = \sum_{i \geq 0} r_i \otimes \om'_i(c,d) \]
for all $c, d \in C$. Thus
$\om_0(c,d) = \om'_0(c,d) =  c d$,
and 
$\om_i, \om'_i \in \mcal{D}^{\mrm{nor}, 1}_{\mrm{poly}}(C)$
for $i \geq 1$.
By expanding the two sides of (\ref{eqn:10}) we get
\[ g(c \star d) = \sum_{i \geq 0} r_i \otimes
\Bigl( \sum_{j + k \leq i} \mu_{j,k; i}\, \gamma_k\bigl(
\om_j(c,d) \bigr) \Bigr) \]
and
\[ g(c) \star' g(d) = 
\sum_{i \geq 0} r_i \otimes
\Bigl( \sum_{m + l \leq i} \ \sum_{j + k  \leq m} 
\mu_{j,k; m}\, \mu_{m,l; i} \,
\om'_l \bigl( \gamma_j(c), \gamma_k(d) \bigr) \Bigr) . \]

Now we compare the coefficients of $r_i$, for $i \geq 1$, in these
last two equations:
\begin{equation} \label{eqn:14}
\sum_{j + k \leq i} \mu_{j,k; i}\, \gamma_k \bigl(
\om_j(c, d) \bigr) =
\sum_{m + l \leq i} \ \sum_{j + k  \leq m} 
\mu_{j,k; m}\, \mu_{m, l; i}\, 
\om'_l \bigl( \gamma_j(c), \gamma_k(d) \bigr) .
\end{equation}
We take the summand with $k = i$ (and $j = 0$) in the left side
of (\ref{eqn:14}), and subtract from it the summand with 
$j = m = i$ (and $k = l = 0$)
in the right side of that equation. This yields
\[ \mu_{0,i; i}\, \gamma_i \bigl(\om_0(c, d) \bigr) -
\mu_{i,0; i}\, \mu_{i,0; i}\, 
\om'_0 \bigl( \gamma_i(c), \gamma_0(d) \bigr) =
\phi_i(c, d) , \]
where $\phi_i(c, d)$ involves the bidifferential operators 
$\om_k, \om'_k$, and the operators 
$\gamma_j$ for $j < i$, which are differential by the induction
hypothesis. We see that $\phi_i(c, d)$ is itself a bidifferential
operator, say of order $\leq m_i$ in each argument. 
And since $\mu_{0,i; i} = 1$ etc., we have
$\gamma_i(c d) - \gamma_i(c) d = \phi_i(c, d)$.

Now, letting $c$ vary, the last equation reads
$[\gamma_i, d] = \phi_i(-, d) \in \opn{End}(C)$.
Hence $[\gamma_i, d]$ is a differential operator,
also of order $\leq m_i$. This is true for every $d \in C$.
By Grothendieck's characterization of differential
operators, it follows that $\gamma_{i}$ is a differential operator (of
order $\leq m_i + 1$). 

Finally let us consider $\opn{log}(g)$. 
We know that 
$r_i \otimes \gamma_i \in \m \hot 
\mcal{D}^{\mrm{nor}, 0}_{\mrm{poly}}(C)$
for $i \geq 1$. And 
$\m \hot \mcal{D}^{\mrm{nor}, 0}_{\mrm{poly}}(C)$
is a closed (nonunital) subalgebra of the ring $R \hot \opn{End}(C)$.
By plugging
$x := \sum_{i \geq 1} r_i \otimes \gamma_i$ into the usual power
series 
$\opn{log}(1+x) = x - \smfrac{1}{2} x^2 +  \cdots $
we conclude that 
$\opn{log}(g) \in 
\m \hot \mcal{D}^{\mrm{nor}, 0}_{\mrm{poly}}(C)$.
\end{proof}

%\cleardoublepage
\section{Deformations of Affine Varieties} \label{sec:defs-vars}

In this section we assume the following setup (a special case of Setup
\ref{setup:230}):

\begin{setup} \label{setup:defs-sh.110}
$\K$ is a field of characteristic $0$; $(R, \m)$ is a parameter 
algebra over $\K$; and $X$ is a smooth algebraic variety
over $\K$, with structure sheaf $\mcal{O}_X$.
\end{setup}

The sheaf $R \hot \mcal{O}_X$ is viewed either as a sheaf of commutative
$R$-algebras, or as a sheaf of $R$-modules with distinguished global section 
$1_R \ot 1_{\mcal{O}_X}$, depending on whether we are dealing with the Poisson
case or the associative case. In both cases there is an augmentation
$R \hot \mcal{O}_X \to \mcal{O}_X$. Thanks to Corollary \ref{cor:240}
we know that $R \hot \mcal{O}_X$ is flat over $R$ and $\m$-adically complete.
Also for every affine open set $U \subset X$ the canonical homomorphism 
\begin{equation} \label{eqn:defs-sh.110}
R \hot \Gamma(U, \mcal{O}_X) \to \Gamma(U, R \hot \mcal{O}_X)
\end{equation}
is bijective. 

Let $U \subset X$ be an affine open set and $C := \Gamma(U, \mcal{O}_X)$.
Recall that an element of $\m \hot \mcal{D}^{\mrm{nor}, 0}_{\mrm{poly}}(C)$
is called a {\em formal differential operator} of $R \hot C$, and an 
element of $\m \hot \mcal{T}^{0}_{\mrm{poly}}(C)$
is called a {\em formal derivation}. 
A {\em differential gauge transformation} of $R \hot C$ is an
$R$-linear automorphism of the form 
$\exp(\ga)$, for some formal differential operator $\ga$. 
Note that $\mcal{T}^{0}_{\mrm{poly}}(C)$ can be viewed as a submodule of 
$\mcal{D}^{\mrm{nor}, 0}_{\mrm{poly}}(C)$. 
According to Proposition \ref{prop:4}(1), the differential gauge transformation
$\exp(\ga)$ preserves the commutative multiplication of $R \hot C$ if and only
if $\ga$ is a formal derivation.

Differential star products and formal Poisson brackets on $R \hot C$ were
introduced in Definitions \ref{dfn:16} and \ref{dfn:dglie.110} respectively.
By abuse of notation, given 
$\om \in \opn{MC}(\m \hot \mcal{D}^{\mrm{nor}}_{\mrm{poly}}(C))$,
we call $\om$ a differential star product, even though the actual star product
is $\star_{\om}$, as in (\ref{eqn:dglie.25}).

We now go to sheaves. A differential gauge transformation $g$ of 
$R \hot \mcal{O}_X$ is an $R$-linear sheaf automorphism, such that for every
affine open set  $U \subset X$, the automorphism of 
$R \hot \Gamma(U, \mcal{O}_X)$ induced by $g$ through the canonical isomorphism 
(\ref{eqn:defs-sh.110}) is a differential gauge transformation.
Similarly, a differential star product (resp.\ formal Poisson bracket) 
on $R \hot \mcal{O}_X$ is an $R$-bilinear pairing $\om$, such that for every
affine open set $U \subset X$, the pairing on 
$R \hot \Gamma(U, \mcal{O}_X)$ induced by $\om$ through the canonical
isomorphism (\ref{eqn:defs-sh.110}) is a differential star product (resp.\
formal Poisson bracket).

\begin{lem} \label{lem:235}
Let $U \subset X$ be an affine open set and $C := \Gamma(U, \mcal{O}_X)$.
\begin{enumerate}
\item Let $g$ be a differential gauge transformation of the augmented $R$-module
$R \hot C$. Then $g$ extends uniquely to a differential gauge transformation 
$\til{g}$ of the sheaf of $R$-modules $R \hot \mcal{O}_U$.

\item Let $\om$ be a differential star product \tup{(}resp.\
formal Poisson bracket\tup{)} on $R \hot C$. Then $\om$ extends uniquely to a 
differential star product \tup{(}resp.\ formal Poisson bracket\tup{)}
$\til{\om}$ on $R \hot \mcal{O}_U$. We denote by 
$(R \hot \mcal{O}_U)_{\til{\om}}$ the resulting $R$-deformation of $\mcal{O}_U$.

\item Suppose $\om$ and $\om'$ are differential star products \tup{(}resp.\
formal Poisson brackets\tup{)} on $R \hot C$, and $g$ is a differential gauge
transformation of the augmented $R$-module $R \hot C$, which is also 
a gauge transformation 
$g : (R \hot C)_{\om} \to (R \hot C)_{\om'}$
of $R$-deformations of $C$. Then 
\[ \til{g} : (R \hot \mcal{O}_U)_{\til{\om}} \to 
(R \hot \mcal{O}_U)_{\til{\om}'} \]
is a gauge transformation of $R$-deformations of $\mcal{O}_U$.
\end{enumerate}
\end{lem}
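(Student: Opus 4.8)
The three parts of the lemma share one mechanism: a formal differential operator, a differential star product, or a formal Poisson bracket on $R\hot C$ is nothing but a global section over $U$ of an $\m$-adically completed quasi-coherent sheaf on $X$ --- in the last two cases together with the (sheaf-local) Maurer--Cartan identity --- and such a section restricts compatibly to every affine open subset of $U$. So the plan is first to record the sheaf-theoretic setup, and then to glue local exponentials, local star products, and local compatibility statements. On the smooth variety $X$ the sheaves $\mcal{D}^{\mrm{nor},p}_{\mrm{poly},X}$ and $\mcal{T}^{p}_{\mrm{poly},X}$ are quasi-coherent, with $\Gamma(V,\mcal{D}^{\mrm{nor},p}_{\mrm{poly},X})=\mcal{D}^{\mrm{nor},p}_{\mrm{poly}}(\Gamma(V,\mcal{O}_X))$ and likewise for $\mcal{T}$ on affine $V$, and restriction to a principal open subset is the evident localization, since differential operators and polyderivations localize. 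Hence $X$ has enough $\mcal{D}^{\mrm{nor},p}_{\mrm{poly},X}$-acyclic and $\mcal{T}^{p}_{\mrm{poly},X}$-acyclic open sets, namely the affine ones, so Corollaries \ref{cor:240}--\ref{cor:242} apply to the sheaves $R\hot\mcal{D}^{\mrm{nor},p}_{\mrm{poly},X}$ and $\m\hot\mcal{D}^{\mrm{nor},p}_{\mrm{poly},X}$ (and their $\mcal{T}$ analogues); in particular, for every affine $V\subseteq U$ the canonical map $\Gamma(V,\m\hot\mcal{D}^{\mrm{nor},p}_{\mrm{poly},X})\to\m\hot\mcal{D}^{\mrm{nor},p}_{\mrm{poly}}(\Gamma(V,\mcal{O}_X))$ is bijective, compatibly with restriction, and similarly for $\mcal{T}$.

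\textbf{Part (1).} Put $\ga:=\opn{log}(g)\in\m\hot\mcal{D}^{\mrm{nor},0}_{\mrm{poly}}(C)$ and regard it, via the setup above, as a section of $\m\hot\mcal{D}^{\mrm{nor},0}_{\mrm{poly},X}$ over $U$. For each affine $V\subseteq U$ the restriction $\ga|_V$ is a formal differential operator on $R\hot\Gamma(V,\mcal{O}_X)$, and $\exp(\ga|_V)$ --- the $\m$-adically convergent series \tup{(\ref{eqn:dglie.118})} --- is a differential gauge transformation of it. These automorphisms are compatible with further restriction, since the action of $\mcal{D}^{\mrm{nor},0}_{\mrm{poly},X}$ on $\mcal{O}_X$ and the series \tup{(\ref{eqn:dglie.118})} are sheaf operations; as the affine opens form a basis of $X$ they glue to an $R$-linear sheaf automorphism $\til{g}$ of $R\hot\mcal{O}_U$, which by construction is a differential gauge transformation with $\Gamma(U,\til{g})=\exp(\ga)=g$. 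For uniqueness, any differential gauge transformation $\til{g}'$ of $R\hot\mcal{O}_U$ restricting to $g$ gives, over each affine $V$, the section $\opn{log}(\til{g}'|_V)$ of $\m\hot\mcal{D}^{\mrm{nor},0}_{\mrm{poly},X}$; these glue to a section over $U$ that equals $\ga$ (take $V=U$), so $\til{g}'$ and $\til{g}$ agree on a basis of opens and hence coincide.

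\textbf{Part (2).} This is the same argument with a section in place of an automorphism. A differential star product is an element $\om\in\opn{MC}(\m\hot\mcal{D}^{\mrm{nor}}_{\mrm{poly}}(C))$, viewed as a section of $\m\hot\mcal{D}^{\mrm{nor},1}_{\mrm{poly},X}$ over $U$; the equation $\d(\om)+\smfrac{1}{2}[\om,\om]=0$ is sheaf-local, so $\om|_V\in\opn{MC}(\m\hot\mcal{D}^{\mrm{nor}}_{\mrm{poly}}(\Gamma(V,\mcal{O}_X)))$ for every affine $V\subseteq U$. The multiplications $\star_{\om|_V}$ of \tup{(\ref{eqn:dglie.25})} are compatible with restriction and hence glue to an $R$-bilinear $\star_{\til\om}$ on $R\hot\mcal{O}_U$; it is associative with unit $1$ because it is so on each $V$, and it reduces to the commutative product modulo $\m$. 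Since $R\hot\mcal{O}_U$ is flat and $\m$-adically complete by Corollary \ref{cor:240}(1), $(R\hot\mcal{O}_U)_{\til\om}$ is an associative $R$-deformation of $\mcal{O}_U$; uniqueness of $\til\om$ is as in part (1). The Poisson case is identical, using \tup{(\ref{eqn:extras.15})}, the sheaf $\mcal{T}_{\mrm{poly},X}$, and the fact that antisymmetry and the Jacobi identity are sheaf-local.

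\textbf{Part (3).} Write $\ga:=\opn{log}(g)$, so $\ga\in\m\hot\mcal{D}^{\mrm{nor},0}_{\mrm{poly}}(C)$. By Proposition \ref{prop:5}(3) (resp.\ Proposition \ref{prop:4}(3)), the hypothesis that $g:(R\hot C)_{\om}\to(R\hot C)_{\om'}$ is a gauge transformation of deformations is equivalent to $\om'=\exp(\opn{af}(\ga))(\om)$. Restricting this identity of sections over $U$ to an affine $V\subseteq U$ yields $\om'|_V=\exp(\opn{af}(\ga|_V))(\om|_V)$, because $\opn{af}$, $\opn{ad}$ and $\exp$ are given by the $R$-multilinear formulas of the DG Lie structure, which are natural in $C$. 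Applying Proposition \ref{prop:5}(3) (resp.\ \ref{prop:4}(3)) over $\Gamma(V,\mcal{O}_X)$ shows that $\til{g}|_V=\exp(\opn{ad}(\ga|_V))$ is a gauge transformation $(R\hot\Gamma(V,\mcal{O}_X))_{\om|_V}\to(R\hot\Gamma(V,\mcal{O}_X))_{\om'|_V}$ of deformations, i.e.\ it respects the deformed multiplications (resp.\ Poisson brackets) over $V$. Since the affine $V$'s form a basis and $\til{g}$ already commutes with the augmentations by part (1), $\til{g}$ is a gauge transformation $(R\hot\mcal{O}_U)_{\til\om}\to(R\hot\mcal{O}_U)_{\til\om'}$ of $R$-deformations of $\mcal{O}_U$. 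The one genuinely technical point is the first paragraph --- arranging the completed quasi-coherent sheaves $R\hot\mcal{D}^{\mrm{nor},p}_{\mrm{poly},X}$ and $\m\hot\mcal{D}^{\mrm{nor},p}_{\mrm{poly},X}$ (and their polyderivation analogues) so that taking sections over an affine open commutes both with $\m$-adic completion and with localization --- and this is exactly what Section \ref{sec:complete-sh} provides; everything else is routine glueing.
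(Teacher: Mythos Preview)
Your proof is correct and follows essentially the same route as the paper: quasi-coherence of $\mcal{D}^{\mrm{nor}}_{\mrm{poly},X}$ and $\mcal{T}_{\mrm{poly},X}$ lets one restrict the relevant formal operators to every affine open of $U$ and glue, with the completed-sheaf machinery of Section~\ref{sec:complete-sh} handling the $\m$-adic issues. The only cosmetic difference is in part~(3): the paper observes directly that the gauge-compatibility identity $g(c_1 \star_{\om} c_2) = g(c_1) \star_{\om'} g(c_2)$ is an equality of two formal bidifferential operators in $(c_1,c_2)$ and hence propagates to each $C_k$ by the uniqueness of extension already established in (1)--(2), whereas you detour through the MC formulation via Propositions~\ref{prop:4}(3) and~\ref{prop:5}(3); the two arguments are equivalent.
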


\begin{proof}
(1)-(2). Both assertions follow from the fact that 
$\mcal{D}_{\mrm{poly}, X}$ is a sheaf of DG Lie algebras on $X$, and each 
$\mcal{D}^p_{\mrm{poly}, X}$ is a quasi-coherent sheaf. To be more precise, 
consider a formal polydifferential operator $\om$ on $R \hot C$
(see Definition \ref{dfn:16}(1); for item (1) we take $\om := \log(g)$).
Let us denote by $\{ U_k \}_{k \in K}$ the collection of affine open sets of
$U$. Take an index $k \in K$, and let $C_k := \Gamma(U_k, \mcal{O}_X)$.
Then $C \to C_k$ is an \'etale $\K$-algebra homomorphism, so $\om$
extends uniquely to a polydifferential operator $\om_k$ on 
$R \hot C_k$; cf.\ \cite[Proposition 2.7]{Ye2}. 
Uniqueness implies that $\om_k$ has the same algebraic properties (Lie bracket,
star product) as $\om$.
Since the collection of open sets $\{ U_k \}_{k \in K}$ is a
basis of the topology of $U$, the collection of operators 
$\{ \om_k \}_{k \in K}$ determines an 
operator $\til{\om}$ on the sheaf $R \hot \mcal{O}_U$, whose restriction to 
$\Gamma(U_k, R \hot \mcal{O}_U)$ is $\om_k$. So $\til{\om}$ is a
formal polydifferential operator on $R \hot \mcal{O}_U$. 

\medskip \noindent
(3) Take any affine open set $U_k \subset U$, and let $C_k$ be as above.
We have to prove that for any 
$c_1, c_2 \in C_k$ there is equality 
$g_k( \om_k(c_1, c_2) ) = \om'_k( g_k(c_1), g_k(c_2) )$
in $R \hot C_k$. But both sides are formal bidifferential operators applied to
the pair $(c_1, c_2)$; so this is also a consequence of the uniqueness of
extension of formal polydifferential operators mentioned above.
\end{proof}

\begin{lem} \label{lem:defs-sh.110}
Let $U \subset X$ be an affine open set and $C := \Gamma(U, \mcal{O}_X)$.
Suppose $A$ is a Poisson $R$-deformation of $C$. Then there is an 
isomorphism of augmented commutative $R$-algebras 
$R \hot C \cong A$.
\end{lem}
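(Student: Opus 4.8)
The plan is to forget the Poisson bracket entirely and work only with the underlying flat $\m$-adically complete commutative $R$-algebra $A$ together with its augmentation $\psi : \K \ot_R A \iso C$. Writing $A_i := R_i \ot_R A$, each $A_i$ is a flat commutative $R_i$-algebra with $A_i / \m A_i = \K \ot_R A \cong C$, one has $A = \lim_{\leftarrow i} A_i$ because $A$ is $\m$-adically complete, and $R \hot C = \lim_{\leftarrow i} (R_i \ot C)$ by the definition of $\hot$. So it suffices to construct a compatible tower of isomorphisms of augmented commutative $R_i$-algebras $\phi_i : R_i \ot C \iso A_i$ (meaning $\phi_{i+1}$ reduces to $\phi_i$ modulo $\m^{i+1}$), since then $\phi := \lim_{\leftarrow i} \phi_i : R \hot C \to A$ will be the desired isomorphism of augmented commutative $R$-algebras.

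I would build the tower by induction, starting from $\phi_0 := \psi^{-1} : C \iso A_0$. Given $\phi_i$, note that $R_{i+1} \surj R_i$ is a square-zero extension with kernel $J := \m^{i+1}/\m^{i+2}$ (a finite-dimensional $\K$-vector space, as $\m J = 0$), and that $A_{i+1} \ot_{R_{i+1}} R_i = A_i$. Transporting the $R_i$-algebra structure through $\phi_i$, we may regard $A_{i+1}$ as a flat $R_{i+1}$-algebra deformation of the commutative $R_i$-algebra $B_i := R_i \ot C$ along this extension, and $B_{i+1} := R_{i+1} \ot C$ is the trivial such deformation. The crucial input is that $C$ is smooth over $\K$, so $B_i$ is smooth over $R_i$, its cotangent complex is just $\Omega^1_{B_i/R_i} \cong B_i \ot_C \Omega^1_{C/\K}$, and this is a finitely generated projective $B_i$-module; hence $\opn{Ext}^k_{B_i}(\Omega^1_{B_i/R_i}, N) = 0$ for every $B_i$-module $N$ and every $k \geq 1$. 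By the standard deformation theory of commutative algebras, the isomorphism classes of flat $R_{i+1}$-algebra deformations of $B_i$ along $R_{i+1} \surj R_i$ form a torsor under $\opn{Ext}^1_{B_i}(\Omega^1_{B_i/R_i}, B_i \ot_{R_i} J)$ as soon as this set is nonempty; it contains $B_{i+1}$ (and the transported $A_{i+1}$), and the $\opn{Ext}^1$ vanishes, so it is a single point. (We do not even need the vanishing of $\opn{Ext}^2$, i.e.\ existence, since $A_{i+1}$ is given.) Thus there is an isomorphism of $R_{i+1}$-algebras $\phi_{i+1} : R_{i+1} \ot C \iso A_{i+1}$ reducing to $\phi_i$ modulo $\m^{i+1}$.

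Finally I would assemble the pieces. By construction $\phi_{i+1} \ot_{R_{i+1}} R_i = \phi_i$ for all $i$; iterating down to $\phi_0 = \psi^{-1}$ gives $\psi \circ (\K \ot_R \phi_i) = \bsym{1}_C$, so each $\phi_i$, and hence the inverse limit $\phi = \lim_{\leftarrow i} \phi_i$, commutes with the augmentations to $C$. Being the inverse limit of isomorphisms of $R_i$-algebras over a compatible tower whose limits are $R \hot C$ and $A$, the map $\phi$ is an isomorphism of $R$-algebras, which completes the proof. The one real obstacle is the inductive step, namely correctly invoking the vanishing of the relevant André--Quillen $\opn{Ext}^1$ for the smooth affine variety together with its torsor description of deformations; the rest is bookkeeping with the $\m$-adic tower and the augmentations.
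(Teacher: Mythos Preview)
Your argument is correct, but the paper's proof is considerably more direct. Instead of classifying flat $R_{i+1}$-deformations of $B_i$ via the torsor under $\opn{Ext}^1_{B_i}(\Omega^1_{B_i/R_i},\,-)$, the paper simply invokes the formal smoothness of $C$ over $\K$: given the square-zero surjection $A_{i+1} \to A_i$ and a $\K$-algebra map $C \to A_i$, the infinitesimal lifting property produces a $\K$-algebra lift $C \to A_{i+1}$. Iterating gives a compatible family of $\K$-algebra sections $C \to A_i$, and extending $R_i$-linearly yields $R_i \ot C \to A_i$; flatness of $A_i$ over $R_i$ (plus the fact that the map is an isomorphism modulo $\m$) then forces these to be isomorphisms.

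Both routes rest on the same smoothness input, just packaged differently: you use it as projectivity of $\Omega^1_{C/\K}$ to kill $\opn{Ext}^1$, while the paper uses it as the lifting property directly. The paper's version avoids any appeal to the cotangent complex or the torsor description of deformations, at the cost of being slightly less conceptual; your version would adapt more readily to situations where one has cohomological vanishing without an obvious section.
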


\begin{proof}
We write $R_i := R / \m^{i+1}$ for $i \geq 0$.
Since $C$ is formally smooth over $\K$, 
we can find a compatible family of $\K$-algebra liftings
$C \to R_i \otimes_R A$ of the augmentation. 
Due to flatness the induced $R_i$-algebra 
homomorphisms 
$R_i \ot C \to R_i \otimes_R A$
are bijective. And because $A$ is complete we get an isomorphism of
augmented $R$-algebras
$R \hot C \iso A$ in the limit. 
\end{proof}

\begin{lem} \label{lem:240}
Let $R$ be a parameter algebra over $\K$. Take an affine open set
$U \subset X$, and let $C := \Gamma(U, \mcal{O}_X)$.

\begin{enumerate}
\item Let $\mcal{A}$ be a Poisson \tup{(}resp.\ associative\tup{)}
$R$-deformation of $\mcal{O}_X$. Then there is a formal Poisson bracket
\tup{(}resp.\ differential star product\tup{)} $\om$ on $R \hot C$, and a gauge
transformation 
\[ \til{g} : (R \hot \mcal{O}_U)_{\til{\om}} \to \mcal{A}|_U \]
between Poisson \tup{(}resp.\ associative\tup{)} $R$-deformations of
$\mcal{O}_U$.

\item Let $\om$ and $\om'$ be formal Poisson brackets \tup{(}resp.\ differential
star products\tup{)} on $R \hot C$, and let 
\[ \til{g} : (R \hot \mcal{O}_U)_{\til{\om}} \to 
(R \hot \mcal{O}_U)_{\til{\om}'} \]
be a gauge transformation between  Poisson \tup{(}resp.\ associative\tup{)} \lb
$R$-deformations of $\mcal{O}_U$.
Then $\til{g}$ is the extension of the gauge transformation
$g = \exp(\ga)$, for a unique formal derivation \tup{(}resp.\ 
formal differential operator\tup{)} $\ga$ of $R \hot C$.
\end{enumerate}
\end{lem}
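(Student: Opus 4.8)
The plan is to reduce both parts to statements already established over the affine ring $C$, transporting them to $\mcal{O}_U$ via the sheafification Lemma \ref{lem:235} and the equivalence $\Gamma(U,-)$ of Theorem \ref{thm:defs-sh.112}. Throughout I will use the canonical identification $\Gamma(U, R \hot \mcal{O}_U) = R \hot C$ of (\ref{eqn:defs-sh.110}).

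For part (1) I would first restrict $\mcal{A}$ to $U$ and pass to global sections $A := \Gamma(U, \mcal{A})$, which is a Poisson (resp.\ associative) $R$-deformation of $C$ by Proposition \ref{prop:233}. In the Poisson case, Lemma \ref{lem:defs-sh.110} supplies an isomorphism of augmented commutative $R$-algebras $R \hot C \iso A$; transporting the Poisson bracket of $A$ along it gives a formal Poisson bracket $\om$ on $R \hot C$ together with an isomorphism $(R \hot C)_{\om} \iso A$ of Poisson $R$-deformations of $C$. In the associative case I would instead use Proposition \ref{prop:12} to get an isomorphism of augmented $R$-modules (preserving the distinguished element) $R \hot C \iso A$, transport the multiplication to obtain a star product $\star$ on $R \hot C$ with $(R \hot C)_{\star} \iso A$, and then invoke Theorem \ref{thm:4} to replace $\star$ by a gauge-equivalent differential star product $\star_{\om}$, obtaining an isomorphism $(R \hot C)_{\star_{\om}} \iso A$ of associative $R$-deformations of $C$. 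In both cases Lemma \ref{lem:235}(2) extends $\om$ to $\til{\om}$ on $R \hot \mcal{O}_U$, with $\Gamma(U, (R \hot \mcal{O}_U)_{\til{\om}}) = (R \hot C)_{\om}$; then the isomorphism of global-section deformations found above lifts, uniquely, to a gauge transformation $\til{g} : (R \hot \mcal{O}_U)_{\til{\om}} \to \mcal{A}|_U$ by Theorem \ref{thm:defs-sh.112}(2).

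For part (2) I would apply $\Gamma(U,-)$ to $\til{g}$, producing a gauge transformation $g : (R \hot C)_{\om} \to (R \hot C)_{\om'}$ of deformations of $C$. Viewed as a self-map of $R \hot C$, $g$ is a gauge transformation of the augmented $R$-module with distinguished element (resp.\ augmented commutative $R$-algebra) $R \hot C$. In the associative case $g$ satisfies $g(a_1 \star_{\om} a_2) = g(a_1) \star_{\om'} g(a_2)$ and both $\star_{\om}, \star_{\om'}$ are differential, so Theorem \ref{thm:5} forces $g$ to be differential, i.e.\ $\ga := \opn{log}(g) \in \m \hot \mcal{D}^{\mrm{nor},0}_{\mrm{poly}}(C)$ is a formal differential operator. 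In the Poisson case $g$ is an automorphism of the augmented commutative $R$-algebra $R \hot C$, so by Proposition \ref{prop:4}(1) it equals $\exp(\opn{ad}_A(\ga))$ for a unique formal derivation $\ga \in \m \hot \mcal{T}^{0}_{\mrm{poly}}(C)$. Either way $\til{g}$ extends $g = \exp(\ga)$ by construction, and this extension is unique by Theorem \ref{thm:defs-sh.112}(2); uniqueness of $\ga$ is the uniqueness in Lemma \ref{lem:dglie.111} (resp.\ Proposition \ref{prop:4}(1)).

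I do not expect a genuinely new difficulty: the lemma is an assembly of Theorems \ref{thm:4}, \ref{thm:5} and \ref{thm:defs-sh.112}, Proposition \ref{prop:4}, and Lemmas \ref{lem:235} and \ref{lem:defs-sh.110}, with all the conceptual weight carried by the existence (Theorem \ref{thm:4}) and rigidity (Theorem \ref{thm:5}) of differential structures, which are taken as given. The one point that will require care is the bookkeeping between the two notions of gauge transformation in play — an isomorphism of $R$-deformations of $C$ versus a gauge transformation of the augmented $R$-module or $R$-algebra $R \hot C$ — and checking that the composites of isomorphisms produced in part (1) are still morphisms of deformations, so that Theorem \ref{thm:defs-sh.112}(2) genuinely applies; once the identification $\Gamma(U, (R \hot \mcal{O}_U)_{\til{\om}}) = (R \hot C)_{\om}$ is in hand this is routine.
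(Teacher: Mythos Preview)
Your proposal is correct and follows essentially the same route as the paper's proof: reduce to sections via Proposition~\ref{prop:233}, produce the appropriate $\om$ and $g$ on $R \hot C$ using Lemma~\ref{lem:defs-sh.110} or Proposition~\ref{prop:12} together with Theorem~\ref{thm:4}, sheafify via Lemma~\ref{lem:235}, and pass back and forth using Theorem~\ref{thm:defs-sh.112}(2); for part~(2) the paper likewise takes $g := \Gamma(U,\til{g})$ and applies Proposition~\ref{prop:4}(1) or Theorem~\ref{thm:5}. The only cosmetic difference is that the paper explicitly builds the differential extension $\til{g}'$ of $g$ via Lemma~\ref{lem:235}(1,3) and then invokes the uniqueness in Theorem~\ref{thm:defs-sh.112}(2) to conclude $\til{g}' = \til{g}$, whereas you appeal to that uniqueness directly.
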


\begin{proof}
(1) Let $A := \Gamma(U, \mcal{A})$, which by Proposition \ref{prop:233} is a
Poisson (resp.\ associative) $R$-deformation of $C$. In the Poisson case there
is an isomorphism $g : R \hot C \to A$ of augmented $R$-algebras, by Lemma
\ref{lem:defs-sh.110}. According to Proposition \ref{prop:4}(2) there is a
formal Poisson bracket $\om$, such that
$g : (R \hot C)_{\om} \to A$ is a gauge transformation of Poisson deformations
of $C$. 

In the associative case we know from Proposition \ref{prop:12} that there is an 
isomorphism $g' : R \hot C \to A$ of augmented $R$-modules, sending  
$1_R \ot 1_C \mapsto 1_A$. By Theorem \ref{thm:4} we can change $g'$
to another isomorphism $R$-modules $g : R \hot C \to A$, such that 
$g : (R \hot C)_{\om} \to A$ is a gauge transformation of associative
$R$-deformations of $C$, for some differential star product $\om$. 

In both cases we now use Lemma \ref{lem:235}(2) to deduce
that the deformation $(R \hot C)_{\om}$ of $C$ extends to a deformation 
$(R \hot \mcal{O}_U)_{\om}$ of $\mcal{O}_U$. 
There is a gauge transformation
$g : \Gamma \bigl( U, (R \hot \mcal{O}_U)_{\om} \bigr) \to 
\Gamma(U, \mcal{A})$
of deformations of $C$. According to Theorem \ref{thm:defs-sh.112}(2), this
extends to a gauge transformation 
$g : (R \hot \mcal{O}_U)_{\om} \to \mcal{A}|_U$
of deformations of $\mcal{O}_U$. 

\medskip \noindent
(2) The delicate issue here is that a priori we don't know that $\til{g}$ is a
differential gauge transformation. 

Applying $\Gamma(U, -)$ to $\til{g}$ we get a gauge transformation
$g : (R \hot C)_{\om} \to \lb (R \hot C)_{\om'}$
between $R$-deformations of $C$. In the Poisson case we know from 
Proposition \ref{prop:4}(1) that 
$g = \exp(\ga)$, for a unique formal derivation $\ga$. 
In the associative case, Theorem \ref{thm:5} says that 
$g = \exp(\ga)$ for a unique formal differential operator $\ga$. 
Next, in both cases, using Lemma \ref{lem:235}(1,3), we see that
$g$ extends uniquely to a differential gauge transformation 
$\til{g}' : (R \hot \mcal{O}_U)_{\om} \to (R \hot \mcal{O}_U)_{\om'}$
between $R$-deformations of $\mcal{O}_U$. 

To finish the proof, the uniqueness in  Theorem \ref{thm:defs-sh.112}(2)
tells us that $\til{g}' = \til{g}$.
\end{proof}

The notion of equivalence of crossed groupoids was defined in Definition
\ref{dfn:221}). 
The dependence of $\cat{AssDef}(R, \mcal{O}_U)$ and
$\cat{PoisDef}(R, \mcal{O}_U)$ on $U$ and $R$ was explained in Proposition
\ref{prop:239}.
Here is the main result of the paper. 

\begin{thm} \label{thm:202}
Let $\K$ be a field of characteristic $0$, $X$ a smooth algebraic variety
over $\K$, $R$ a parameter algebra over $\K$, 
and $U$ an affine open set in $X$.
There are equivalences of crossed groupoids
\[  \opn{geo} : \
\opn{Del} \bigl( \Gamma(U, \mcal{D}_{\mrm{poly}, X}^{\mrm{nor}}) , R \big)
\to \cat{AssDef}(R, \mcal{O}_U) \]
and
\[  \opn{geo} : \
\opn{Del} \bigl( \Gamma(U, \mcal{T}_{\mrm{poly}, X}) , R \big)
\to \cat{PoisDef}(R, \mcal{O}_U) \]
which we call {\em geometrization}. The equivalences $\opn{geo}$  commute with 
homomorphisms $R \to R'$ of parameter algebras, and with inclusions of affine
open sets $U' \to U$.
\end{thm}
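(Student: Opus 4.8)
The plan is to build the functor $\opn{geo}$ one level at a time out of the dictionary of Sections~\ref{sec:dglie} and~\ref{sec:ploy-diff}, and then to read off the crossed-groupoid axioms and the three conditions of Definition~\ref{dfn:221} from Lemmas~\ref{lem:235} and~\ref{lem:240}. Write $C := \Gamma(U, \mcal{O}_X)$ and $A := R \hot C$; in the associative case put $\g := \Gamma(U, \mcal{D}_{\mrm{poly}, X}^{\mrm{nor}}) = \mcal{D}^{\mrm{nor}}_{\mrm{poly}}(C)$, and in the Poisson case $\g := \Gamma(U, \mcal{T}_{\mrm{poly}, X}) = \mcal{T}_{\mrm{poly}}(C)$; both are quantum type DG Lie algebras, and the two cases run in parallel. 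On objects: given $\om \in \opn{MC}(\m \hot \g)$, Proposition~\ref{prop:5}(2) (resp.\ Proposition~\ref{prop:4}(2)) attaches the differential star product (resp.\ formal Poisson bracket) $\star_{\om}$ on $A$, and Lemma~\ref{lem:235}(2) extends it uniquely to a differential star product (resp.\ formal Poisson bracket) $\til{\om}$ on $R \hot \mcal{O}_U$; set $\opn{geo}(\om) := (R \hot \mcal{O}_U)_{\til{\om}}$, an $R$-deformation of $\mcal{O}_U$, and let both $\opn{geo}_1$ and $\opn{geo}_2$ act on objects this way. On $1$-morphisms: a morphism $\om \to \om'$ in $\opn{Del}_1(\g, R)$ is an element $g = \exp(\ga) \in \exp(\m \hot \g^0)$ with $\opn{Af}(g)(\om) = \om'$; by Proposition~\ref{prop:5}(3) (resp.\ \ref{prop:4}(3)) the automorphism $\exp(\opn{ad}_A(\ga))$ of $A$ is a gauge transformation $A_{\om} \to A_{\om'}$ of deformations of $C$, and by Lemma~\ref{lem:235}(1,3) it extends uniquely to a gauge transformation $\opn{geo}(g) : \opn{geo}(\om) \to \opn{geo}(\om')$ of deformations of $\mcal{O}_U$. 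On $2$-morphisms: for $\om \in \opn{MC}(\m \hot \g)$ the group $N_{\om} = \exp(\m \hot \g^{-1})_{\om}$ is canonically identified, by Proposition~\ref{prop:5}(4) (resp.\ \ref{prop:4}(4)), with $\opn{IG}(A_{\om})$; composing with the isomorphism $\Gamma\bigl(U, \opn{IG}(\opn{geo}(\om))\bigr) \cong \opn{IG}\bigl(\Gamma(U, \opn{geo}(\om))\bigr) = \opn{IG}(A_{\om})$ supplied by Corollary~\ref{cor:241} gives the group isomorphism $\opn{geo}_2 : N_{\om} \iso \Gamma\bigl(U, \opn{IG}(\opn{geo}(\om))\bigr)$. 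Here $\Gamma(U, \opn{geo}(\om)) = A_{\om}$ because $\Gamma(U, R \hot \mcal{O}_U) = A$ and $\til{\om}$ restricts to $\om$ by construction.

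Next I would check that $(\opn{geo}_1, \opn{geo}_2)$ is a morphism of crossed groupoids. Functoriality of $\opn{geo}_1$ and $\opn{geo}_2$ is immediate: under $\Gamma(U, -)$, composition of exponentials of adjoint operators on the Deligne side goes to composition of the corresponding gauge transformations and inner gauge transformations of deformations of $C$, and the uniqueness of sheaf extension in Lemma~\ref{lem:235} propagates this back to $\mcal{O}_U$. Compatibility with the twistings amounts to matching the isomorphism $\opn{Ad}(g) : N_{\om} \iso N_{\om'}$ of \tup{(\ref{eqn:Lie-desc.104})} with $\opn{IG}(\opn{geo}(g))$; after applying $\Gamma(U, -)$ and transporting through the identifications above, this is the commutativity $\opn{ig} \circ \opn{IG}(g) = \opn{Ad}(g) \circ \opn{ig}$ from the proof of Proposition~\ref{prop:238}. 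Compatibility with the feedbacks amounts to matching $\opn{geo}_1 \circ \opn{D}_{\om}$ with $\opn{ig} \circ \opn{geo}_2$, where $\opn{D}_{\om}$ is the homomorphism of \tup{(\ref{eqn:Lie-desc.103})}; after $\Gamma(U, -)$ this unwinds to the statement that $\opn{ad}_{A_{\om}}$, under the identification $\opn{IG}(A_{\om}) \cong \exp(\m \hot \g^{-1})_{\om}$ of Proposition~\ref{prop:5}(4) (resp.\ \ref{prop:4}(4)), corresponds to the Lie homomorphism $\d_{\om} : (\m \hot \g^{-1})_{\om} \to \m \hot \g^0$ followed by $\exp(\opn{ad}_A(-))$ as in Proposition~\ref{prop:5}(1) (resp.\ \ref{prop:4}(1)) --- which is built into the definition of the bracket $[-,-]_{\om}$; in the associative case Proposition~\ref{prop:234} moreover exhibits $\opn{ig}(a)$ as conjugation by the invertible element $a$. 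None of this goes beyond bookkeeping.

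It remains to verify the three conditions of Definition~\ref{dfn:221}. Essential surjectivity of $\opn{geo}_1$ on objects is precisely Lemma~\ref{lem:240}(1): every $R$-deformation of $\mcal{O}_U$ admits a gauge transformation from some $(R \hot \mcal{O}_U)_{\til{\om}} = \opn{geo}(\om)$. Bijectivity of $\opn{geo}_2$ on the $2$-morphism groups is automatic, since $\opn{geo}_2$ was built as a composite of isomorphisms. For the $1$-morphism groups, faithfulness follows from the injectivity of $\exp(\opn{ad}_A(-))$ in Proposition~\ref{prop:5}(1) (resp.\ \ref{prop:4}(1)) together with the uniqueness clause of Lemma~\ref{lem:235}(1), using that $\Gamma(U, \opn{geo}(g)) = \exp(\opn{ad}_A(\ga))$ by construction; and fullness is Lemma~\ref{lem:240}(2), which says that every gauge transformation $\opn{geo}(\om) \to \opn{geo}(\om')$ of deformations of $\mcal{O}_U$ is the sheaf extension of $\exp(\opn{ad}_A(\ga))$ for a unique formal derivation (resp.\ formal differential operator) $\ga$ --- and then $\opn{Af}(\exp(\ga))(\om) = \om'$, because the differential star products $\til{\om}'$ and $\exp(\opn{ad}_A(\ga))(\til{\om})$ on $R \hot \mcal{O}_U$ agree while extensions are unique, so $\exp(\ga)$ is the unique morphism of $\opn{Del}_1(\g, R)$ over the given gauge transformation. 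Finally, the asserted compatibilities --- with a homomorphism $R \to R'$ of parameter algebras and with an inclusion $U' \subset U$ of affine open sets --- follow from the functoriality of $\opn{Del}(\g, R)$ in both arguments (Definition~\ref{dfn:Lie-desc.101}), of $R' \hatotimes{R} (-)$ and restriction to $U'$ on the deformation side (Proposition~\ref{prop:239}), of the complexes $\mcal{D}^{\mrm{nor}}_{\mrm{poly}}$ and $\mcal{T}_{\mrm{poly}}$ under the \'etale homomorphism $C \to \Gamma(U', \mcal{O}_X)$ (\cite[Proposition 2.7]{Ye2}), and of the sheafification of Lemma~\ref{lem:235}; in each case the two composites coincide by uniqueness of extension of formal polydifferential operators.

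As for the main obstacle: it is essentially absent from this theorem, all the substance having been isolated into the earlier results. The hard inputs are Theorem~\ref{thm:defs-sh.112} (extending a deformation of $C$ to a sheaf on $U$, and uniqueness of that extension, via Ore localization), Theorems~\ref{thm:4} and~\ref{thm:5} (every associative deformation is gauge equivalent to a differential one, and the rigidity of differential gauge transformations --- the latter due to P.~Etingof), and Theorem~\ref{thm:220} with Corollary~\ref{cor:241} (the behaviour of $\m$-adically complete sheaves and the identity $\Gamma(U, \m^j \mcal{M}) = \m^j \Gamma(U, \mcal{M})$), all feeding into Lemmas~\ref{lem:235} and~\ref{lem:240}. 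Granting those, the proof of Theorem~\ref{thm:202} is the assembly above; the only place demanding care is the matching of the crossed-groupoid data (twisting and feedback), which however reduces, through Corollary~\ref{cor:241} and Proposition~\ref{prop:234}, to the sheaf-level compatibilities of $\exp$, $\opn{ad}$ and $\opn{Ad}$ already recorded in Proposition~\ref{prop:238}, so I expect no surprises there.
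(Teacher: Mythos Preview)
Your proposal is correct and follows essentially the same route as the paper's own proof: construct $\opn{geo}$ via Lemma~\ref{lem:235}, obtain essential surjectivity and full faithfulness of $\opn{geo}_1$ from the two parts of Lemma~\ref{lem:240}, identify the $2$-morphism groups through Propositions~\ref{prop:4}(4)/\ref{prop:5}(4) and Corollary~\ref{cor:241}, and note that the crossed-groupoid compatibilities and the functoriality in $R$ and $U$ are formal. The only difference is that you spell out the twisting/feedback verification and the faithfulness/fullness split in more detail than the paper, which dispatches both as ``immediate from the definitions''.
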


\begin{proof}
Let us write 
$\g(U)$ for either 
$\Gamma(U, \mcal{D}_{\mrm{poly}, X}^{\mrm{nor}})$
or 
$\Gamma(U, \mcal{T}_{\mrm{poly}, X})$.
Likewise we write 
$\cat{P}(R, U)$ for either $\cat{AssDef}(R, \mcal{O}_U)$ or
$\cat{PoisDef}(R, \mcal{O}_U)$.

Take an object 
\[ \om \in \opn{MC}(\m \hot \g(U)) = 
\opn{Ob} \bigl( \opn{Del} ( \g(U) , R ) \bigr) . \]
According to Lemma \ref{lem:235}(2) there is an $R$-deformation 
$\opn{geo}(\om) := (R \hot \mcal{O}_U)_{\til{\om}}$ of $\mcal{O}_U$.
By Lemma \ref{lem:235}(1, 3) a gauge transformation $g : \om \to \om'$ in 
$\opn{Del}_1(\g(U), R)$ induces to a unique gauge transformation of
deformations
$\opn{geo}_1(g) = \til{g} : \opn{geo}(\om) \to \opn{geo}(\om')$ in 
$\cat{P}(R, U)$.
Thus we get a morphism of groupoids 
\[ \opn{geo}_1 :  \opn{Del}_1(\g(U), R) \to \cat{P}_1(R, U) . \]
Lemma \ref{lem:240}(1) says that $\opn{geo}_1$ is essentially surjective on
objects. Lemma \ref{lem:240}(2) tells us that $\opn{geo}_1$ is bijective on
gauge transformations ($1$-morphisms). So $\opn{geo}_1$ is an equivalence.

Let 
$\om \in \opn{Ob} \bigl( \opn{Del} ( \g(U) , R ) \bigr)$.
By Propositions \ref{prop:4}(4) and \ref{prop:5}(4) we have 
a group isomorphism 
\[ \opn{Del}_2 (\g(U), R)(\om) =
\exp(\m \hot \g^{-1}(U))_{\om} \cong \opn{IG}((R \hot C)_{\om}) , \]
where $C := \Gamma(U, \OX)$. And by Corollary \ref{cor:241} there is a group
isomorphism 
\[ \opn{IG}((R \hot C)_{\om}) \cong 
\Gamma \bigl( U, \opn{IG}((R \hot \mcal{O}_U)_{\til{\om}}) \bigr) 
= \cat{P}_2(R, U)(\opn{geo}(\om)) . \]
In this way we get a fully faithful morphism of groupoids
\[  \opn{geo}_2 :  \opn{Del}_2(\g(U), R) \to \cat{P}_2(R, U) .  \]   
         
The fact that the pair of morphisms $\opn{geo} := (\opn{geo}_1, \opn{geo}_2)$
respects the twistings and the feedbacks is immediate
from the definitions (cf.\ Proposition \ref{prop:238} and Definition
\ref{dfn:Lie-desc.101}). 
So $\opn{geo}$ is an equivalence of crossed groupoids.

Finally it is clear from the construction that $\opn{geo}$ is functorial in $U$
and $R$. 
\end{proof}

\begin{rem} \label{rem:defs-sh.110}
In \cite[Definitions 1.4 and 1.8]{Ye1} we introduced the notion of
differential structure on an associative $R$-deformation $\mcal{A}$
of $\mcal{O}_X$. We said there that one must stipulate the existence
of such a differential structure, and uniqueness was not clear. 
Now, having Theorem \ref{thm:202} at our disposal, we know that any
associative $R$-deformation $\mcal{A}$ of $\mcal{O}_X$ admits a differential
structure. Moreover, any two such differential structures are equivalent.
\end{rem}

Here is a similar theorem (but much easier to prove). 

\begin{thm} \label{thm:235}
Let $\K$ be a field of characteristic $0$, $X$ a smooth algebraic variety
over $\K$, $R$ a parameter algebra over $\K$, 
and $U$ an affine open set in $X$.
Write $C := \Gamma(U, \OX)$. 
There are equivalences of crossed groupoids
\[  \Gamma(U, -) : \
\cat{AssDef}(R, \mcal{O}_U) \to \cat{AssDef}(R, C) \]
and
\[  \Gamma(U, -) : \
\cat{PoisDef}(R, \mcal{O}_U) \to \cat{PoisDef}(R, C) , \]
that  commute with homomorphisms $R \to R'$ of parameter algebras.
\end{thm}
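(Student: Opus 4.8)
The plan is to bootstrap from the statements already proved for the underlying groupoids; the whole point is that promoting $\Gamma(U,-)$ from a functor of groupoids to an equivalence of crossed groupoids needs no analytic input beyond Theorem~\ref{thm:230} and its corollaries. First I would note that, since $\mcal{O}_U=\mcal{O}_X|_U$, the open set $U$ is itself a smooth affine algebraic variety with $C=\Gamma(U,\mcal{O}_U)$; so the functor $\Gamma(U,-)$ on objects (well-defined by Proposition~\ref{prop:233}) and on gauge transformations is precisely the one furnished by Corollary~\ref{cor:243} (equivalently Theorem~\ref{thm:defs-sh.112}), applied with $U$ in the role both of the ambient variety and of its own affine open subset. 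Thus $\Gamma(U,-)_1$ is an equivalence of groupoids: essentially surjective on objects and bijective on $1$-morphisms.

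Next I would identify the $2$-morphisms. Fix a deformation $\mcal{A}$ of $\mcal{O}_U$ and put $A:=\Gamma(U,\mcal{A})$. The group of $2$-automorphisms of $\mcal{A}$ is $\Gamma(U,\opn{IG}(\mcal{A}))=\Gamma(U,\exp(\m\mcal{A}))$, whose underlying set is $\Gamma(U,\m\mcal{A})$ because $\exp_{\m\mcal{A}}$ is an isomorphism of sheaves of sets. By Corollaries~\ref{cor:241} and~\ref{cor:242} the canonical map $\m A\to\Gamma(U,\m\mcal{A})$ is bijective, and is an isomorphism of pronilpotent Lie $R$-algebras (the bracket on $\mcal{A}$, be it the commutator or the Poisson bracket, restricts to the one on $A$). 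Since the group law on $\exp(\m\mcal{A})$ and that on $\opn{IG}(A)=\exp(\m A)$ are both the Baker--Campbell--Hausdorff series in the bracket, which is preserved by the left-exact functor $\Gamma(U,-)$ and by the above Lie-algebra isomorphism, one gets a group isomorphism $\Gamma(U,-)_2:\Gamma(U,\opn{IG}(\mcal{A}))\iso\opn{IG}(A)$. This is exactly the identification already used inside the proof of Proposition~\ref{prop:234}, now carried out uniformly for the associative and the Poisson case, and it makes $\Gamma(U,-)_2$ a functor between the (totally disconnected) groupoids of $2$-morphisms, agreeing on objects with $\Gamma(U,-)_1$.

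It remains to check the two compatibility requirements of Definition~\ref{dfn:220}, namely that $\Gamma(U,-)$ respects the twistings and the feedbacks. For a gauge transformation $g:\mcal{A}\to\mcal{A}'$ the twisting $\opn{IG}(g)$, and for $a\in\Gamma(U,\opn{IG}(\mcal{A}))$ the feedback $\opn{ig}(a)$, are manufactured from $\opn{ad}$ and $\exp$ applied inside $\m\mcal{A}$; since $\opn{ad}$, $\exp$ and the BCH law all commute with passage to sections over $U$, and the identification $\m A\cong\Gamma(U,\m\mcal{A})$ of the previous paragraph is natural in $\mcal{A}$, the square expressing $\Gamma(U,-)_2\circ\opn{IG}(g)=\opn{IG}(\Gamma(U,g))\circ\Gamma(U,-)_2$ and the one expressing $\Gamma(U,-)_1(\opn{ig}(a))=\opn{ig}(\Gamma(U,-)_2(a))$ both commute; this is a routine diagram chase. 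So $\Gamma(U,-)$ is a morphism of crossed groupoids, and since $\Gamma(U,-)_1$ is an equivalence of groupoids and every $\Gamma(U,-)_2$ is bijective, Definition~\ref{dfn:221} gives that it is an equivalence of crossed groupoids.

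Finally, for a homomorphism $\sigma:R\to R'$ of parameter algebras I would produce a natural isomorphism $R'\hatotimes{R}\Gamma(U,\mcal{A})\iso\Gamma(U,R'\hatotimes{R}\mcal{A})$ between the two resulting $R'$-deformations of $C$ (both are $R'$-deformations of $C$, the former by Proposition~\ref{prop:15}, the latter by Propositions~\ref{prop:23} and~\ref{prop:233}). The homomorphism $\Gamma(U,\mcal{A})\to\Gamma(U,R'\hatotimes{R}\mcal{A})$ extends, by the universal property of the $\m'$-adic completion, to a canonical $R'$-algebra homomorphism of these two modules; both are flat, $\m'$-adically complete and augmented to $C$, and the homomorphism reduces to $\bsym{1}_C$ modulo $\m'$, hence is an isomorphism (cf.\ Step~2 of the proof of Theorem~\ref{thm:defs-sh.112}). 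Naturality in $\mcal{A}$, together with the fact that the $2$-morphism identification of the second step is built from the same algebra data, gives that $\Gamma(U,-)$ commutes with $\sigma$. The one place where anything genuinely needs to be verified is the compatibility of the single isomorphism $\Gamma(U,-)_2$ with both the twisting and the feedback; since that reduces, via Corollaries~\ref{cor:241} and~\ref{cor:242}, to the naturality of $\opn{ad}$, $\exp$ and BCH with respect to $\Gamma(U,-)$, there is no new content here -- which is why this theorem is much easier than Theorem~\ref{thm:202}.
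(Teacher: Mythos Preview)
Your proposal is correct and follows essentially the same approach as the paper: use Corollary~\ref{cor:243} (equivalently Theorem~\ref{thm:defs-sh.112}) for the $1$-truncation, and Corollary~\ref{cor:241} to identify $\opn{IG}(\Gamma(U,\mcal{A}))$ with $\Gamma(U,\opn{IG}(\mcal{A}))$ for the $2$-morphisms. The paper's own proof is a three-line sketch that declares the crossed-groupoid compatibility ``clear'' and does not spell out the $R\to R'$ naturality; your version simply unpacks those points (the BCH/$\opn{ad}$/$\exp$ naturality under $\Gamma(U,-)$, and the canonical isomorphism $R'\hatotimes{R}\Gamma(U,\mcal{A})\cong\Gamma(U,R'\hatotimes{R}\mcal{A})$), so the only difference is level of detail, not strategy. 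One minor remark: Corollary~\ref{cor:242} is not actually needed here---Corollary~\ref{cor:241} alone gives the bijection $\m A\iso\Gamma(U,\m\mcal{A})$.
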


\begin{proof}
It is clear that $\Gamma(U, -)$ is a morphism of crossed groupoids. 
By Corollary \ref{cor:243} we know that there is an equivalence of groupoids
on the $1$-truncations (i.e.\ forgetting $2$-morphisms). 
Corollary \ref{cor:241} implies that the group homomorphism  
$ \opn{IG}(\Gamma(U,\mcal{A})) \to \Gamma(U, \opn{IG}(\mcal{A}))$
is bijective.
\end{proof}

\begin{rem}
The associative case of Theorem \ref{thm:235} is valid in arbitrary
characteristic, since $\opn{IG}(\mcal{A})$ can be described without
exponential maps (See Proposition \ref{prop:234}).
\end{rem}

%\cleardoublepage

\end{document}